\newcolumntype{C}{>{$}c<{$}}
\newcolumntype{L}{>{$}l<{$}}
\newcolumntype{R}{>{$}r<{$}}
\newcommand{\dual}{\vee}
\newcommand{\bbP}{\mathbb{P}}
\newcommand{\bbC}{\mathbb{C}}
\newcommand{\bbGr}{\mathbb{G}r}
\newcommand{\sO}{\mathcal{O}}
\newcommand{\sN}{\mathcal{N}}
\newcommand{\isom}{\cong}
\newcommand{\into}{\rightarrow}
\DeclareMathOperator{\slm}{SL}
\DeclareMathOperator{\rank}{rank}
\DeclareMathOperator{\codim}{codim}
\DeclareMathOperator{\Hilb}{Hilb}
\DeclareMathOperator{\Cat}{Cat}
\DeclareMathOperator{\sym}{Sym}
\theoremstyle{plain}
\newtheorem{teo}{Theorem}[section]
\newtheorem{propo}[teo]{Proposition}
\newtheorem{cor}[teo]{Corollary}
\newtheorem{obs}[teo]{Observation}
\newtheorem{lem}[teo]{Lemma}
\theoremstyle{definition}
\newtheorem{definition}[teo]{Definition}
\theoremstyle{remark}
\newtheorem{fn*}{Formule Notevoli}
\begin{document}
\title{Apolar Ideal and Normal Bundle of Rational Curves}
\author{Alessandro Bernardi}
\address{Dipartimento di Matematica, University of Florence}
\email{bernardi@math.unifi.it}

\date{\today}
\begin{abstract}
As in our previous work \cite{bernardi1} we address the problem to determine the splitting of the normal bundle of rational curves. With apolarity theory we are able to characterize some particular  subvarieties in some Hilbert scheme of
rational curves, defined by the splitting type of the normal bundle
and the restricted tangent bundle.

\end{abstract}
\maketitle

\section{Introduction}
In this work we address the problem of classifying the rational
curves $C\subset \bbP^m$ of degree $ n$, $n\geq m$, by the splitting
type of their normal bundle $N_{C;\bbP^m}$ or/and their restricted
tangent bundle $T\bbP^m|_C$. This problem was addressed by several
authors (see
\cite{Eisenbud-VandeVen1},\cite{Eisenbud-VandeVen2},\cite{Ghione-Sacchiero},\cite{Ghione},\cite{Miret},\cite{Ramella1},\cite{Ramella2},\cite{verdier},\cite{GHI}).
Our approach consists in a projective point of view, by considering
 our degree $n$ curves as projections of the rational normal curve $C_n\subset
\bbP^n$ from a linear space $L=\bbP^{k-1}$. The projected curves lie
in a projective space of dimension $m=(n-k)$. We point out that we
are interested in the case when $C$ has only ordinary singularities,
as in the work of Ghione and Sacchiero (\cite{Ghione-Sacchiero}). 

We define the scheme $H^{m,n}$ as the component of the Hilbert scheme $\Hilb_n\bbP^m$ of arithmetic genus zero curves of degree $n$ in $\bbP^m$ containing the smooth curves as an open subset.

We denote with $\sN^n_{n-k}(n_1,...,n_{n-k-1})$ the subscheme of curves such that the splitting type of normal bundle is $(n_1,...,n_{n-k-1})$.

One advantage in working directly on the Grassmannian $Gr(\bbP^{k-1}, \bbP^n)$ is that the number of irreducible components and the codimension of the varieties $N^n_{n-k}(n_1,...,n_{n-k-1})$ defined as $\sN^n_{n-k}(n_1,...,n_{n-k-1})/SL(n-k+1)$ (parameterizing subspaces $L$ such that the curve obtained by projecting from $L$
has normal bundle isomorphic to $\bigoplus^{n-k-1}_{i=1}\sO_{\bbP^1}(n_i)$ ) remain the same as those of $\sN^n_{n-k}(n_1,...,n_{n-k-1})$. Then we can study directly the basic structures of these subvarieties in the Grassmannian. In the same way we define the varieties $T^n_{n-k}(t_1,...t_{n-k})$ with respect to the restricted tangent bundle.

In the results in  \cite{bernardi1}, we have studied some Hilbert
scheme of rational curves with fixed splitting type of the normal
bundle and the restricted tangent bundle and we have characterized
them via the individuation of some particular family of multisecant
projective spaces containing $L$. In this work, instead, we
characterize some particular subvarieties in some Hilbert scheme of
rational curves, defined by the splitting type of the normal bundle
or the restricted tangent bundle. We obtain our main results
(Theorems \ref{mainresult}, \ref{mainresultTG}) by using apolarity
theory. In particular we will show that:

For $n-1\leq 3k\leq 3(n-3$), if the center of projection
$L\isom\bbP^{k-1}$ is contained in a  $\bbP^{k+1}$, $(k+2)-$secant
to the rational normal curve $C_n \subset \bbP^n$, then we have:
$$
N_{\pi_L(C_n);\bbP^{n-k}}\isom \sO(n+2)^{n-k-2}\oplus \sO(n+1+2k) .
$$

For $2k<n-1$, if the center of projection $L\isom\bbP^{k-1}$ is
contained in a $\bbP^{k}$, $(k+1)-$secant to the rational normal
curve $C_n \subset \bbP^n$, then we have:
$$
T\bbP^{n-k}|_{\pi_{k}(C_n)}\isom \sO(n+1)^{n-k-1}\oplus\sO(n+1+k).
$$
One of the main reason of interest in this work is represented by
the study of the interplay between apolarity theory and the
splitting type of the normal and conormal bundle (see also
\cite{bernardi1}).
\section{Rational Curves of degree $n$ in codimension $k$}
We describe here the main steps of our approach, referring to
 \cite{bernardi1} for more details. Let $C_n=\nu_n(\bbP^1)\subset \bbP^n$ be the
rational normal curve which is the image of the Veronese map
$\nu_n:\bbP^1\into \bbP^n$. Let $\pi_L(C_n)$ be the rational curve
obtained from $C_n$ by projection from a $k$-dimensional linear
subspace $L\subset \bbP^n$ on $\bbP^{n-k}\subset \bbP^n$;  we will
suppose that $\pi_L(C_n)$ has only ordinary singularities.

%
%

 Let $J(\nu_n)$ be the Jacobian matrix of $\nu_n$:
$$
J(\nu_n)=
\left(
\begin{array}{ccccc}
ns^{n-1} & \dots & t^{n-1} & 0 \\
0 & s^{n-1} & \dots & nt^{n-1}
\end{array}
\right).
$$
We will indicate with $V$ a 2-dimensional complex vector space, therefore we can write down in more invariant way:
\begin{equation}\label{diagr:normCODIMK}
\xymatrix{
&&0\ar[d]&0\ar[d]\\
& & \bbC^{k}\otimes\sO_{C_n} \ar[d]^P \ar[r]^{\isom}& \bbC^{k}\otimes\sO_{C_n} \ar[d]^{(\mathcal{N}^L_{n,k})^t} &  \\
0 \ar[r] & V\otimes\sO_{\bbP^1}(-n+1) \ar[r]^{J(\nu_n)} & \sym^nV\otimes\sO_{C_n} \ar[d] \ar[r]^{Syz(J(\nu_n))} & \sym^{n-2}V\otimes \sO_{C_n}(2) \ar[d] \ar[r] & 0 \\
0 \ar[r] & V\otimes\sO_{\bbP^1}(-n+1) \ar[r]^{J(\pi_{n-3}\circ\nu_n)} & \frac{\sym^nV}{\bbC^{k}}\otimes\sO_{\pi_L(C_n)} \ar[d]\ar[r] & N'_{\pi_L(C_n);\bbP^{n-k}}(-n) \ar[d] \ar[r] & 0\\
&&0&0.
}
\end{equation}
where the map $(\mathcal{N}^L_{n,k})^t$ is given by:
$$
(\mathcal{N}^L_{n,k})^t=
Syz(J(\nu_n))\cdot \left(
\begin{array}{ccccc}
a^1_0 & \dots & a^{k}_0 \\
\vdots & \ddots & \vdots \\
a^1_n & \dots & a^{k}_n
\end{array}
\right)=
$$
$$
\left(
\begin{array}{ccccccc}
a^1_0 t^2-2a^1_1 ts+a^1_2 s^2 & a^1_1 t^2-2a^1_2 ts+a^1_3 s^2 & \dots & a^1_{n-2} t^2-2a^1_{n-1} ts+a^1_n s^2\\
\vdots & \vdots & \ddots & \vdots \\
a^{k}_0 t^2-2a^{k}_1 ts+a^{k}_2 s^2 & a^{k}_1 t^2-2a^{k}_2 ts+a^{k}_3 s^2 & \dots & a^{k}_{n-2} t^2-2a^{k}_{n-1} ts+a^{k}_n s^2
\end{array}
\right)^t,
$$
 a $(n-1)\times k$ matrix. We stress out that the first row of the above diagram is $\slm(2)$ invariant, but the second one is not.
\begin{obs}\label{obs:singularities}
We can observe that if $\pi_L(C_n)$ has only ordinary singularities, then the map of differential is surjective (see \cite{Ghione-Sacchiero}), so ${N'}^{\vee}_{\pi_L(C_n);\bbP^{n-k}}$ is a vector bundle. We will consider only cases with ordinary singularities, so
we will indicate ${N'}_{\pi_L(C_n);\bbP^{n-k}}$ as $N_{\pi_L(C_n);\bbP^{n-k}}$.
\end{obs}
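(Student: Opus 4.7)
The plan is to reduce the claim that $N'_{\pi_L(C_n);\bbP^{n-k}}$ is locally free to showing that the Jacobian morphism $J(\pi_L\circ\nu_n)$ appearing in the bottom row of diagram \eqref{diagr:normCODIMK} is injective on every fiber over $\bbP^1$. Since the source $V\otimes\sO_{\bbP^1}(-n+1)$ has rank $2$, once fiberwise injectivity of $J(\pi_L\circ\nu_n)$ is established, its cokernel is the cokernel of a subbundle inclusion and is therefore locally free on $\bbP^1$; untwisting by $\sO(n)$ then shows that $N'_{\pi_L(C_n);\bbP^{n-k}}$ itself is a vector bundle on $\bbP^1$, exactly as the observation asserts.

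To verify fiberwise injectivity, I would translate the geometric hypothesis of having only ordinary singularities into an unramifiedness statement for the parametrization $\pi_L\circ\nu_n:\bbP^1\to\pi_L(C_n)\subset\bbP^{n-k}$. By definition, every singular point of $\pi_L(C_n)$ is an ordinary $r$-fold point, meaning that $r$ smooth branches meet at it with linearly independent tangent directions. Over such a point, each of the $r$ preimages in $\bbP^1$ carries its own smooth branch with a non-vanishing tangent vector, while over smooth points of $\pi_L(C_n)$ the statement is immediate. Hence $d(\pi_L\circ\nu_n)$ is injective on the one-dimensional tangent space at every point of $\bbP^1$, which is exactly the condition that the $2\times(n+1-k)$ matrix representing $J(\pi_L\circ\nu_n)$ has maximal rank $2$ everywhere.

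The main obstacle, to my mind, lies in the second step: one has to justify that the ordinary singularity hypothesis rules out both cusps of $\pi_L\circ\nu_n$ and tangency-type degenerations in which two branches become tangent at the image point. Both failures would produce fiberwise rank drops of $J(\pi_L\circ\nu_n)$ and hence torsion in the cokernel. The careful local-analytic check that ordinary singularities preclude these pathologies is the content of the computation carried out by Ghione--Sacchiero in \cite{Ghione-Sacchiero}, which I would invoke rather than reproduce, and which justifies dropping the prime in the notation and writing simply $N_{\pi_L(C_n);\bbP^{n-k}}$.
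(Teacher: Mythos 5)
Your argument is correct and takes essentially the same route as the paper, which offers no proof beyond deferring the key local check to Ghione--Sacchiero: ordinary singularities force $\pi_L\circ\nu_n$ to be an immersion, so the rank-$2$ Jacobian in the bottom row of the diagram is fiberwise injective and its cokernel is locally free. One small inaccuracy in your discussion of the ``main obstacle'': two smooth branches meeting tangentially would \emph{not} drop the fiberwise rank of $J(\pi_L\circ\nu_n)$, since the Jacobian is evaluated at each preimage point separately; only ramification of the parametrization (cusps) produces torsion in the cokernel, though the ordinary-singularity hypothesis excludes both phenomena in any case.
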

We can obtain as in the case of the normal bundle of rational
curves, the following exact sequence for the restricted tangent
bundle :
\begin{equation}\label{succ:T}
\xymatrix{
0 \ar[r] & (T\bbP^{n-k}|_{\pi_L(C_n)})^{\vee}(n+1) \ar[r] & \sym^{n-1}V\otimes\sO_{C_n}^n \ar[r]^{\mathcal{T}^L_{n,k}}& \bbC^k\otimes\sO_{C_n}(1) \ar[r] & 0},
\end{equation}

where we have indicated with $T^L_{n,k}$ the $2k\times n$ matrix:
$$
T^L_{n,k}=
\left(
\begin{array}{ccccc}
a^1_0 & \dots & a^1_{n-1}\\
-a^1_1 & \dots & -a^1_{n}\\
\vdots & \ddots & \vdots \\
a^{k}_0 & \dots & a^{k}_{n-1}\\
 -a^{k}_1 & \dots & -a^{k}_n
\end{array}
\right).
$$

We refer to  \cite{bernardi1} for the main results about apolarity
and the Waring's Problem. Moreover we need the following  results (see \cite{iarrobinokanev}):
\begin{definition}
Let $f\in S_n$ be a binary form of degree $n$. Let $L_1,...,L_m$ be a linear forms. A representation of $f$ as a sum:
\begin{equation}\label{GAD}
f=G_1L^{n-g_1+1}_1+...+G_sL^{n-g_m+1}_m,
\end{equation}
where $G_i\in S_{g_i-1}$, is called a generalized additive decomposition (GAD) of $f$. A GAD is called normalized if no pair $L_{\alpha},L_{\beta}$ is proportional to each other and none of the $G_i$ is divisible by $L_i$. Its length is by definition $\sum^{m}_{i=1}g_i$.

If all $g_i=1$ we obtain the classical additive decomposition:
\begin{equation}
f=c_1L^n_1+...+c_sL^n_s, 
\end{equation}
with $c_i\in \bbC$.

The length of a binary form $f$ is the minimum length of a GAD of $f$, we denote it by $l(f)$.
\end{definition}
\begin{lem}[\cite{iarrobinokanev}]\label{GADLEMMA}
Let $\phi=\prod^m_{i=1}(b_i\partial_0-a_i\partial_1)^{g_i}$ be a prime decomposition of a nonzero form in $T_s$. Let $L_i=a_ix_0+b_ix_1$. Then a form $f\in S_n$ with $n\geq s$ has a GAD as \ref{GAD} if and only if $\phi$ is apolar to $f$. If all roots of $\phi$ are simple, then this is an additive decomposition.
\end{lem}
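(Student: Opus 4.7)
The plan is to use the perfect pairing of apolarity $T_d \times S_d \to \bbC$ to convert the statement into a coprime-factorization computation in the polynomial ring $T = \bbC[\partial_0, \partial_1]$. Write $D_i = b_i\partial_0 - a_i\partial_1$, so that $\phi = \prod_i D_i^{g_i}$. The direction GAD $\Rightarrow$ apolarity is a direct Leibniz calculation: $D_i(L_i) = b_ia_i - a_ib_i = 0$, so $D_i$ annihilates every power of $L_i$; by Leibniz all terms with a nontrivial derivative falling on $L_i^{n-g_i+1}$ vanish, giving $D_i^{g_i}(G_iL_i^{n-g_i+1}) = D_i^{g_i}(G_i)\cdot L_i^{n-g_i+1} = 0$ since $\deg G_i = g_i-1$. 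The $D_j$ commute, so $\phi$ annihilates each summand of $f = \sum_i G_i L_i^{n-g_i+1}$ separately, and $\phi\cdot f = 0$.

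For the converse, suppose $\phi\cdot f = 0$. Since the pairing $T_d \times S_d \to \bbC$ is perfect, for every nonzero $D \in T_d$ with $d\le n$ the identity
\[
\ker\bigl(D\colon S_n \to S_{n-d}\bigr) = \bigl(D\cdot T_{n-d}\bigr)^{\perp}
\]
inside $S_n$ holds, because $f$ pairs trivially with $D\psi$ for all $\psi\in T_{n-d}$ iff $D(f)=0$. Now $T$ is a UFD and the linear forms $D_i$ are pairwise coprime (no two $L_i$ are proportional), so $(\phi) = \bigcap_i(D_i^{g_i})$ as ideals of $T$; restricting to the degree-$n$ graded piece gives $\phi\,T_{n-s} = \bigcap_i D_i^{g_i}T_{n-g_i}$ inside $T_n$. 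Taking orthogonal complement in $S_n$ turns intersections into sums and yields
\[
\ker\phi = \sum_{i=1}^{m}\ker D_i^{g_i}.
\]

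Explicit identification of each $\ker D_i^{g_i}$ finishes the proof. Pick any linear form $L_i'$ independent from $L_i$; a short calculation in the $(L_i, L_i')$ coordinate system shows that $D_i$ is a nonzero scalar multiple of $\partial_{L_i'}$. Expanding $f = \sum_{j=0}^{n}\alpha_j L_i^{n-j}(L_i')^j$, the equation $\partial_{L_i'}^{g_i} f = 0$ forces $\alpha_j = 0$ for $j\ge g_i$, so $f = L_i^{n-g_i+1}G_i$ with $G_i\in S_{g_i-1}$. Combined with the previous paragraph, every $f$ apolar to $\phi$ has the form $f = \sum_i G_i L_i^{n-g_i+1}$, i.e.\ a GAD as in (\ref{GAD}). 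The last sentence of the lemma is then immediate: when every $g_i = 1$, $S_{g_i-1} = S_0 = \bbC$ and the decomposition reduces to $f = \sum_i c_i L_i^n$.

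The main technical point is the identity $\phi\,T_{n-s} = \bigcap_i D_i^{g_i}T_{n-g_i}$ in $T_n$. The UFD identity $(\phi) = \bigcap_i(D_i^{g_i})$ holds in $T$; one must verify that passing to a single graded component preserves it, which is automatic here since $\phi$ is homogeneous of pure degree $s\le n$ and all the operations respect the grading.
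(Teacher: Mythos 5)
The paper states this lemma without proof, citing Iarrobino--Kanev, so there is no internal argument to compare against; judged on its own, your proof is correct and is essentially the standard apolarity argument from that reference: the forward direction by Leibniz, and the converse by dualizing the graded identity $(\phi)_n=\bigcap_i(D_i^{g_i})_n$ under the perfect pairing $T_n\times S_n\to\bbC$ and identifying $\ker D_i^{g_i}$ as $L_i^{n-g_i+1}S_{g_i-1}$. The only points worth making explicit are that the hypothesis $n\geq s$ is what guarantees $T_{n-s}\neq 0$ (so that $\ker\phi\subsetneq S_n$ and the converse is not vacuous), and that over $\bbC$ the differentiation and contraction pairings have the same kernels, so the argument is insensitive to which convention the paper's $\circ$ denotes.
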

\begin{lem}[\cite{iarrobinokanev}]
Let $n=2t$ or $n=2t+1$, let $f\in S_n$. Then $l(f)\leq t+1$. If $I_f=Ann(f)$ is the ideal of forms apolar to $f$, then $l(f)$ equals the order $d$ (i.e. the initial degree) of the graded ideal $I_f=(I_f)_d+(I_f)_{d+1}+...$.
\end{lem}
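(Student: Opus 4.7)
The plan is to split the statement into two independent claims: (i) $l(f)$ equals the initial degree $d$ of the apolar ideal $I_f$, and (ii) $d \leq t+1$. The two together give $l(f) = d \leq t+1$.

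For (i), I would invoke Lemma \ref{GADLEMMA}. Let $\phi \in (I_f)_d$ be a nonzero apolar form of minimal degree. Factoring over $\bbC$ as $\phi = \prod^m_{i=1}(b_i\partial_0 - a_i\partial_1)^{g_i}$, Lemma \ref{GADLEMMA} produces a GAD of $f$ of length $\sum g_i = d$, so $l(f) \leq d$. Conversely, any GAD of $f$ of length $s$ is witnessed, via the same lemma, by an apolar form of degree $s$, giving $d \leq l(f)$. Hence $l(f) = d$, provided the chosen $\phi$ has degree at most $n$ — a hypothesis secured by (ii).

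For (ii), I would use a straightforward dimension count. The contraction map $\mu_s : T_s \to S_{n-s}$, $\phi \mapsto \phi \cdot f$, has kernel exactly $(I_f)_s$. Since $\dim_{\bbC} T_s = s+1$ and $\dim_{\bbC} S_{n-s} = n-s+1$, we get $\dim_{\bbC} \ker \mu_s \geq 2s - n$. Taking $s = t+1$ gives $2s - n \geq 1$ in both parities: if $n = 2t$ then $2(t+1) - 2t = 2$, and if $n = 2t+1$ then $2(t+1) - (2t+1) = 1$. In either case $(I_f)_{t+1} \neq 0$, so $d \leq t+1$; in particular $d \leq n$, which retroactively justifies the appeal to Lemma \ref{GADLEMMA} in (i).

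The one place that requires some care is in (i): the form $\phi$ of minimal degree may have repeated linear factors, in which case Lemma \ref{GADLEMMA} produces a genuine GAD with exponents $g_i > 1$ rather than a classical additive decomposition. This is precisely why the \emph{length} is measured against all GADs, not only against sums of $n$th powers of linear forms; without this flexibility the bound $l(f) \leq t+1$ would fail for special $f$ (e.g.\ for $f$ lying in the tangential variety of the rational normal curve). Beyond acknowledging this subtlety, the proof is a direct pairing of the Apolarity Lemma with elementary linear algebra, so I do not expect a serious obstacle.
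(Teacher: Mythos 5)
The paper states this lemma as a quoted result from \cite{iarrobinokanev} and supplies no proof of its own, so there is no in-paper argument to diverge from; your proof is the standard one and is correct. The dimension count $\dim (I_f)_{t+1}\ge (t+2)-(n-t)=2(t+1)-n>0$ is exactly the catalecticant computation underlying Theorem~\ref{rankcat}, and both directions of your step (i) via Lemma~\ref{GADLEMMA} are sound --- the only point left implicit is that for $d\le l(f)$ you should pass to a \emph{normalized} GAD of minimal length (pairwise non-proportional $L_i$, $L_i\nmid G_i$, which never increases length) so that $\prod_i\bigl((L_i)^{\perp}\bigr)^{g_i}$ is a genuine prime decomposition of degree exactly $l(f)$, together with the edge case $t=0$, $n=2t$, where $t+1\le n$ fails but the statement is vacuous.
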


\begin{lem}[Jordan's Lemma]
Suppose that the linear forms $L_i,i=1,...,m$ are not proportional to each other and:
$$
0=G_1L^{n-g_1+1}_1+...+G_sL^{n-g_m+1}_m,
$$
with $\sum^m_{i=1}g_i\leq n+1$. Then $G_i=0$ for every $i$.
\end{lem}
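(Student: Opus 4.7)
The plan is to proceed by induction on the number of summands $m$, using the apolar operator dual to $L_m$ to kill the last term while producing a new identity of the same Jordan shape in smaller degree. The case $m=1$ is immediate, since $L_1^{n-g_1+1}$ is a nonzero binary form and so forces $G_1=0$.

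For the inductive step, introduce the operator $\ell_m := b_m \partial_0 - a_m \partial_1$, which annihilates $L_m$ and hence every $L_m^k$. For $i \neq m$, the non-proportionality of $L_i$ and $L_m$ is the statement $c_i := b_m a_i - a_m b_i \neq 0$, and one has $\ell_m(L_i^k) = k c_i L_i^{k-1}$. I apply $\ell_m^{g_m}$ to the given identity. The $m$-th term vanishes automatically: the factor $L_m^{n-g_m+1}$ is untouched and $\ell_m^{g_m}(G_m) = 0$, because $\deg G_m = g_m-1 < g_m$. For $i \neq m$, a direct Leibniz expansion gives $\ell_m^{g_m}(G_i L_i^{n-g_i+1}) = \tilde G_i \cdot L_i^{(n-g_m)-g_i+1}$, where every term in the expansion shares the smallest power $L_i^{(n-g_i+1)-g_m}$ (this exponent is nonnegative precisely because $g_i+g_m \leq n+1$) and $\deg \tilde G_i \leq g_i-1$. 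The outcome is a new identity $\sum_{i \neq m} \tilde G_i L_i^{(n-g_m)-g_i+1} = 0$ of the same shape, with $n$ replaced by $n-g_m$ and one fewer summand, and with the inequality $\sum_{i \neq m} g_i \leq (n-g_m)+1$ preserved. The induction hypothesis then forces $\tilde G_i = 0$ for every $i \neq m$.

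The crucial step is the back-lift: deducing $G_i = 0$ from $\tilde G_i = 0$. After changing coordinates so that $L_m = x_1$ (hence $\ell_m = \partial_0$), the condition $\partial_0^{g_m}(G_i L_i^{n-g_i+1}) = 0$ is equivalent to $x_1^{n-g_m+1}$ dividing the binary form $G_i L_i^{n-g_i+1}$. Since $L_i$ is not proportional to $x_1$, the factor $L_i^{n-g_i+1}$ is coprime to $x_1$, so all of the $x_1$-divisibility must come from $G_i$. But $\deg G_i \leq g_i-1$, and the hypothesis $\sum g_j \leq n+1$ gives $g_i-1 \leq n-g_m < n-g_m+1$; hence $G_i = 0$. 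With $G_i=0$ for every $i \neq m$, the original identity collapses to $G_m L_m^{n-g_m+1} = 0$ and so $G_m = 0$ as well.

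The main obstacle, and the step I would spend most of my care on, is this back-lift: one must certify that applying $\ell_m^{g_m}$ to the sum does not discard information about the individual $G_i$. This is exactly the place where the sharp hypothesis $\sum g_i \leq n+1$ is consumed, since without it $G_i$ could absorb a high power of $x_1=L_m$ invisible to $\partial_0^{g_m}$, the $\tilde G_i \to G_i$ reconstruction would fail, and the induction on $m$ would collapse.
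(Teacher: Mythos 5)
Your proof is correct. Note that the paper itself gives no argument for Jordan's Lemma --- it is quoted from Iarrobino--Kanev without proof --- so there is nothing in-paper to compare against; your induction on $m$ via the operator $\ell_m^{g_m}$ is the standard classical proof. The two places that need care both check out: the Leibniz expansion does produce $\ell_m^{g_m}(G_iL_i^{n-g_i+1})=\tilde G_i L_i^{(n-g_m)-g_i+1}$ with $\deg\tilde G_i\leq g_i-1$ and the hypothesis $\sum g_j\leq n+1$ is exactly what keeps the exponent nonnegative and passes to the smaller identity; and the back-lift is sound, since $\partial_0^{g_m}F=0$ for a degree-$n$ binary form $F$ is indeed equivalent to $x_1^{n-g_m+1}\mid F$, and $\deg G_i=g_i-1\leq n-g_m$ rules out that divisibility unless $G_i=0$.
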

\begin{propo}[Uniqueness of GAD]
Suppose $n=2t$ or $n=2t+1$. Let:
$$
f=G_1L^{n-g_1+1}_1+...+G_mL^{n-g_m+1}_m,
$$
be a normalized $GAD$ of $f\in S_n$ of length $s=\sum^m_{i=1}g_i\leq t+1$. Then $f$ has no other $GAD$ of length $\leq n+1-s$ and $l(f)=s$. In particular if $s\leq t$ or if $s=t+1, n=2t+1$ (equivalently $2s\leq n+1$), then the above is the unique normalized GAD of $f$ having length $\leq t+1$.
\end{propo}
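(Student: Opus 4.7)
The plan is to combine Lemma~\ref{GADLEMMA} (which identifies GADs with elements of the apolar ideal $I_f$) with Jordan's Lemma (which forces the coefficient forms to vanish in a relation $\sum_k H_kM_k^{n-h_k+1}=0$ whose total length is at most $n+1$, provided the $M_k$ are pairwise non-proportional). The given normalized GAD immediately produces an apolar form $\phi=\prod_i(b_i\partial_0-a_i\partial_1)^{g_i}\in I_f$ of degree $s$, so $l(f)\leq s$ is free.

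For the first assertion, I would suppose $f$ admits a second normalized GAD $f=\sum_{j=1}^{m'}G'_j(L'_j)^{n-g'_j+1}$ of length $s'\leq n+1-s$, and subtract it from the given one to obtain a vanishing identity. The key step is then to group the terms by proportionality classes of the linear forms: if $M=L_i=L'_j$ is a shared line with (WLOG) $g_i\leq g'_j$, the two summands combine to $(G_iM^{g'_j-g_i}-G'_j)M^{n-g'_j+1}$, contributing effective exponent $g'_j$. The grouped identity has pairwise non-proportional linear forms and total length at most $s+s'\leq n+1$, so Jordan's Lemma forces every grouped coefficient to vanish. For a line appearing in only one of the two GADs, the vanishing coefficient is the nonzero $G_i$ or $G'_j$, which is impossible; for a shared line $M$, the equality $G_iM^{g'_j-g_i}=G'_j$ together with the normalization conditions $M\nmid G_i$ and $M\nmid G'_j$ forces $g_i=g'_j$ and $G_i=G'_j$. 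Hence the two GADs coincide.

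To deduce $l(f)=s$, suppose $l(f)=s''<s$. The preceding lemma identifies $l(f)$ with the initial degree of $I_f$, yielding an apolar form of degree $s''$ which Lemma~\ref{GADLEMMA} converts into a GAD of $f$ of length $s''$. A short parity check using $s\leq t+1$ shows that $s''<s$ forces $s''\leq n+1-s$ in both cases $n=2t$ and $n=2t+1$, so the first part compels this new GAD to equal the original, contradicting $s''<s$. The ``in particular'' statement reduces to the same numerical check: both hypotheses $s\leq t$ and ($s=t+1,n=2t+1$) amount to $2s\leq n+1$, so any other normalized GAD of length $s''\leq t+1$ satisfies $s+s''\leq n+1$ and is forced to coincide with the given one by the first assertion.

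The main technical obstacle I expect is the bookkeeping in the grouping step: combining summands with proportional linear forms and then using the normalization condition $L_i\nmid G_i$ to rigidify the exponents on shared lines. Once this is pinned down, the remainder of the argument is a numerical parity check and direct applications of Jordan's Lemma and Lemma~\ref{GADLEMMA}.
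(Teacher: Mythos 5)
The paper does not prove this proposition: it is quoted as background from \cite{iarrobinokanev}, so there is no in-paper argument to compare against. Your proof is correct and is essentially the standard Iarrobino--Kanev argument: subtract the two normalized GADs, merge proportional linear forms so that a shared line $M=L_i=L'_j$ contributes effective parameter $\max(g_i,g'_j)$, apply Jordan's Lemma to the resulting relation of total length $\le s+s'\le n+1$, and use the normalization conditions $M\nmid G_i$, $M\nmid G'_j$ to force $g_i=g'_j$ and $G_i=G'_j$; the numerical reductions ($s''<s\le t+1$ implies $s''\le n+1-s$, and $2s\le n+1$ implies $t+1\le n+1-s$) all check out. The only caveat worth flagging is that your argument compares \emph{normalized} GADs, so the blanket claim ``no other GAD of length $\le n+1-s$'' should be read as: any such GAD, once normalized (which cannot increase the length), coincides with the given one.
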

\begin{definition}
Let $f\in S_n$ and let $2l(f)\leq n+1$. Then the unique normalized GAD of length $s=l(f)$ is called the canonical form of $f$.
\end{definition}
\begin{teo}[Sylvester]
\begin{itemize}
\item[i)] For odd $n=2t+1$, the general $f\in S_n$ has a unique decomposition as a sum of $t+1$ $n-$th powers of linear forms.
\item[ii)] For even $n=2d$, the general $f\in S_n$ has infinitely many decompositions as a sum of $t+1$ $n-$th powers of linear forms.
\end{itemize}
\end{teo}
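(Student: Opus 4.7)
The plan is to use apolarity theory together with the structure of the apolar ideal of a binary form. For any $f \in S_n$, the apolar ideal $I_f \subset T = \bbC[\partial_0,\partial_1]$ is a graded Gorenstein ideal in a polynomial ring in two variables, hence a complete intersection $I_f = (g_1,g_2)$ with $\deg g_1 + \deg g_2 = n+2$ and $\deg g_1 = d = l(f)$. By Lemma \ref{GADLEMMA}, additive decompositions of $f$ of length $s$ correspond bijectively (modulo scalars on each summand) to nonzero forms $\phi \in (I_f)_s$ with $s$ distinct simple roots. So the question reduces to counting projectivized $\phi \in (I_f)_{t+1}$ with simple roots, for $f$ general.

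First I would treat the odd case $n = 2t+1$. For generic $f \in S_n$ the initial degree is $l(f) = t+1$, so the complete intersection structure forces $\deg g_1 = t+1$ and $\deg g_2 = t+2$; in particular $(I_f)_{t+1} = \bbC\cdot g_1$ is one-dimensional. Thus any additive decomposition of length $t+1$ must come from $g_1$ itself, and it is automatic (and a classical genericity statement for binary forms of fixed degree) that for general $f$ the form $g_1 \in (I_f)_{t+1}$ has $t+1$ distinct roots. The Uniqueness of GAD proposition already gives uniqueness of the canonical GAD in the regime $2s \leq n+1$, which is saturated here ($2(t+1) = n+1$), and the simple-roots condition upgrades this canonical GAD to an honest additive decomposition. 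This proves part (i).

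For the even case $n = 2t$ (the ``$d$'' in the statement), I would again invoke that for generic $f$, $l(f) = t+1$, but now the complete intersection has $\deg g_1 = \deg g_2 = t+1$, so $(I_f)_{t+1}$ is \emph{two}-dimensional. The projectivized pencil $\{[\lambda g_1 + \mu g_2]\} \cong \bbP^1$ consists of degree-$(t+1)$ forms apolar to $f$, and the locus of forms with a repeated root is the discriminant hypersurface, which meets this $\bbP^1$ in only finitely many points. Hence a one-parameter family of $\phi \in (I_f)_{t+1}$ has $t+1$ distinct simple roots, producing infinitely many additive decompositions of length $t+1$ by Lemma \ref{GADLEMMA}. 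This proves part (ii).

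The only nonformal step in the plan is the genericity claim that a general $f$ really has $l(f) = t+1$ and that the corresponding $g_1$ (resp.\ generic member of the pencil) has simple roots; this is where the main work lies. I would justify it by an open-condition argument: the locus where $l(f) < t+1$ is cut out by the vanishing of the maximal minors of the catalecticant map $T_{t+1} \to S_{n-t-1}$, which is generically of full rank by the Hilbert function of the generic Artinian Gorenstein quotient $T/I_f$, and the locus where $g_1$ (or a pencil member) has a double root is the pullback of a discriminant and hence a proper closed subvariety. Together these identify a dense open set of $S_n$ on which the conclusions (i) and (ii) hold.
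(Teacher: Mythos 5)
The paper does not prove this theorem at all: it is recalled as a classical result (attributed to Sylvester, quoted from Iarrobino--Kanev) and used as part of the apolarity toolkit, so there is no in-paper argument to compare against. Your proof is the standard modern apolarity proof and it is essentially correct; moreover it uses exactly the ingredients the paper sets up, namely the complete-intersection structure of $I_f=(g_1,g_2)$ with $\deg g_1+\deg g_2=n+2$ (Theorem \ref{completeintersection}) and the dictionary between squarefree apolar forms and additive decompositions (Lemma \ref{GADLEMMA}). The dimension counts are right: for $n=2t+1$ generic, $(I_f)_{t+1}=\bbC\cdot g_1$ is one-dimensional, giving uniqueness; for $n=2t$ generic, both generators sit in degree $t+1$, so $(I_f)_{t+1}$ is a base-point-free pencil and distinct squarefree members have distinct root sets, giving infinitely many decompositions. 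Two small points deserve an explicit word. First, in the even case the claim that the pencil is not contained in the discriminant should be justified: since $g_1,g_2$ have no common zeros the pencil is base-point-free, and a Bertini-type argument (or a direct computation on a model such as $\lambda\partial_0^{t+1}+\mu\partial_1^{t+1}$) shows the generic member is squarefree. Second, a squarefree $\phi\in(I_f)_{t+1}$ a priori yields $f=\sum c_iL_i^n$ with possibly some $c_i=0$; you should note that $c_i\neq 0$ for all $i$ precisely because $l(f)=t+1$ excludes shorter decompositions. Your closing open-condition argument (full rank of the catalecticant on a dense open set, witnessed e.g.\ by $f=\sum_{i=1}^{t+1}L_i^n$ for general $L_i$) correctly handles the genericity of $l(f)=t+1$.
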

\begin{teo}\label{rankcat}
Let $n=2t$ or $2t+1$, let $f\in S_n$.
\begin{itemize}
\item[i)] Let $s=\rank \Cat_f(n-t,t;2)$. Then $l(f)=s$. If $2s\leq n+1$, then $f$ has a unique generalized additive decomposition of length $s$. and no other GADs of length $\leq t+1$.
\item[ii)] For every pair of integers $s,e$ with $1\leq s\leq e\leq n-e+1$, if $l(f)=s$, then $l(f)=s=\rank \Cat_f(n-e,e;2)$.
\end{itemize}
\end{teo}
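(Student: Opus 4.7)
The plan is to reduce both parts of the theorem to a single computation of the Hilbert function of the apolar algebra $T/I_f$, where $T=\bbC[\partial_0,\partial_1]$. The key observation is that the catalecticant $\Cat_f(n-e,e;2)$ is precisely the matrix of the contraction map $T_e\to S_{n-e}$, $\phi\mapsto\phi\circ f$, whose kernel is $(I_f)_e$ by definition of the apolar ideal. Hence
$$
\rank\Cat_f(n-e,e;2)=(e+1)-\dim(I_f)_e=\dim(T/I_f)_e=:h(e),
$$
and both statements translate into statements about values of $h$.

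Next I would invoke the classical fact that for a binary form $f$, the apolar ideal $I_f\subset T$ is a graded height-$2$ Gorenstein ideal in a polynomial ring in two variables, hence a complete intersection: $I_f=(\phi_1,\phi_2)$ with $\deg\phi_1=a\leq\deg\phi_2=b$ and $a+b=n+2$. By the Iarrobino--Kanev lemma quoted above, the initial degree of $I_f$ equals $l(f)$, so $l(f)=a$. From the Koszul resolution
$$
0\to T(-a-b)\to T(-a)\oplus T(-b)\to T\to T/I_f\to 0
$$
one reads off
$$
h(j)=\begin{cases} j+1 & \text{if } 0\leq j\leq a-1,\\ a & \text{if } a\leq j\leq b-1,\\ n+1-j & \text{if } b\leq j\leq n+1,\\ 0 & \text{otherwise,}\end{cases}
$$
which is symmetric around $n/2$ with maximum value $a=l(f)$ attained on the interval $[a-1,\,n+1-a]$.

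From here both parts are immediate. For part (ii), the hypothesis $s=l(f)=a\leq e\leq n-e+1$ gives both $a\leq e$ and $e+a\leq 2e\leq n+1$, i.e., $e\leq n+1-a=b-1$; therefore $h(e)=a=s$. For part (i), set $e=t$: since $a\leq b$ and $a+b=n+2$ with $n\in\{2t,2t+1\}$, one has $a\leq t+1$, so $t\in[a-1,\,n+1-a]$ lies in the plateau and $h(t)=a=l(f)$. The uniqueness clause in (i) under the hypothesis $2s\leq n+1$ is then precisely the situation handled by the Uniqueness of GAD proposition already stated, which produces a unique normalized GAD of length $s$ and no other GAD of length $\leq n+1-s$ (and one checks $n+1-s\geq t+1$ under $2s\leq n+1$).

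The main conceptual input is the complete-intersection structure of $I_f$ for binary forms, equivalent to Macaulay's classification of Artinian Gorenstein quotients of a polynomial ring in two variables; once this is in hand, everything reduces to the clean piecewise formula for $h$ above. The only delicate point is the routine bookkeeping of the inequalities in the case split for part (i), and the verification that the range hypothesis in part (ii) is exactly what is needed to place $e$ in the plateau of $h$.
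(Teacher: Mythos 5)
The paper does not actually prove Theorem \ref{rankcat}: it is quoted verbatim from Iarrobino--Kanev, together with the surrounding lemmas, as background material. So there is no ``paper proof'' to compare against; what matters is whether your reconstruction is sound, and it is. Your reduction of both parts to the Hilbert function $h(e)=\rank \Cat_f(n-e,e;2)=\dim(T/I_f)_e$ is the standard route, and your inputs are all already available in the text: the complete-intersection structure of $I_f$ with generator degrees $a$ and $n+2-a$ is part (iii) of Theorem \ref{completeintersection}, the identity $l(f)=a$ (initial degree of $I_f$) is the second quoted Iarrobino--Kanev lemma, and the uniqueness clause is exactly the ``Uniqueness of GAD'' proposition. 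The Koszul computation of $h$ is correct, the plateau $[a-1,\,n+1-a]$ is right, and the inequality bookkeeping checks out: for (ii), $a\leq e$ and $2e\leq n+1$ force $e\leq n+1-a$; for (i), $l(f)\leq t+1$ and $n\geq 2t$ place $t$ in the plateau; and $2s\leq n+1$ does give $n+1-s\geq t+1$ in both parities. Note that Theorem \ref{completeintersection}(i) already records the Hilbert function you derive, so your Koszul step could be replaced by a citation; the only point you gloss over is that the GAD of length $s$ produced from the degree-$s$ generator of $I_f$ via Lemma \ref{GADLEMMA} is normalized, which is needed before invoking the uniqueness proposition, but this is routine since a non-normalized GAD could be rewritten with strictly smaller length, contradicting $l(f)=s$.
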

\begin{teo}[\cite{iarrobinokanev}]\label{completeintersection}
Let $f$ be a binary form of degree $n=2t$ or $2t+1$ and $I_f=Ann(f)$ be the ideal of forms apolar to $f$. Let $A_f=S/I_f$ be the associated Gorenstein Artin algebra. Let $s=\max \{\dim(A_f)_i\}$. Then: 
\begin{itemize}
\item[i.] $s=l(f)$ and the Hilbert function of $A_f$ satisfies:
$$
H(A_f)=(1,2,...,s-1,\begin{array}{c}s-1\\s\end{array},s,...,\begin{array}{c}n-s+1\\s\end{array},s-1,...,2,1);
$$
\item[ii.] Suppose $2s\leq n+1$. Then $\dim I_s=1$, $I_s=<\alpha>$ and for every integer $v$ with $s\leq v\leq n-s+1$ one has $I_v=S_{v-s}\circ \alpha$; 
\item[iii.] The apolar ideal $I_f$ is generated by two homogeneous polynomials $\alpha\in (I_f)_s$ and $\beta \in (I_f)_{n+2-s}$.\\ Equivalently the ring $A_f$ is a complete intersection of generator degrees $s,n+2-s$. The two polynomials above have no common zeros.
\end{itemize}
\end{teo}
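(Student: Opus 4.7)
My plan is to combine the earlier results of the paper --- Theorem~\ref{rankcat} and Lemma~\ref{GADLEMMA}, which already identify $l(f)$ with the initial degree of $I_f$ and with ranks of catalecticants --- with two classical structural inputs: the symmetry of the Hilbert function of a graded Artin Gorenstein algebra, and the theorem that every codimension-two graded Artin Gorenstein ideal in a polynomial ring is a complete intersection. Since the apolar ring is a polynomial ring in only two variables, $I_f$ is automatically of codimension two, so the complete-intersection structure comes for free and most of the argument reduces to a dimension count.

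For part (i), Gorenstein duality gives the symmetry $\dim(A_f)_i=\dim(A_f)_{n-i}$ (socle in degree $n$). Let $d$ denote the initial degree of $I_f$. For $i<d$ one has $(I_f)_i=0$, hence $\dim(A_f)_i=i+1$, and by symmetry $\dim(A_f)_i=n-i+1$ for $i>n-d$; this already establishes the ascending and descending stretches. The maximum $s=\max_i\dim(A_f)_i$ is then attained from $i=d-1$ onward, yielding $s=d$. The identification $d=l(f)$ follows from Theorem~\ref{rankcat}(i) together with Lemma~\ref{GADLEMMA}, since the rank of the middle catalecticant $\Cat_f(n-t,t;2)$ is precisely $\dim(A_f)_t=\max\{\dim(A_f)_i\}$. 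The explicit Hilbert function in the statement is then exactly that of a complete intersection of two forms of degrees $s$ and $n+2-s$ in two variables, computed directly from the Koszul resolution by inclusion--exclusion.

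For parts (ii) and (iii): under $2s\leq n+1$ the middle range $s\leq v\leq n-s+1$ is nonempty and carries $\dim(A_f)_v=s$, so $\dim(I_f)_s=(s+1)-s=1$ and $(I_f)_s=\langle\alpha\rangle$. Multiplication by $\alpha$ gives an injection $S_{v-s}\hookrightarrow(I_f)_v$ (the ambient ring being a domain), whose image has dimension $v-s+1=\dim(I_f)_v$, forcing $(I_f)_v=S_{v-s}\circ\alpha$. For (iii), Serre's theorem on codimension-two graded Gorenstein ideals produces $I_f=(\alpha,\beta)$; part (ii) fixes $\deg\alpha=s$, and the socle-degree identity $\deg\alpha+\deg\beta-2=n$ for a two-variable complete intersection gives $\deg\beta=n+2-s$. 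The absence of common zeros of $\alpha,\beta$ is equivalent to their forming a regular sequence in the UFD $\bbC[\partial_0,\partial_1]$, which is automatic since $A_f$ is Artinian. The main subtlety I anticipate is the unimodality/plateau step in (i) --- showing that $H(A_f)$ stays at $s$ throughout the middle range --- which is exactly where the two-variable hypothesis is essential, since Gorenstein Hilbert functions can fail to be unimodal in three or more variables.
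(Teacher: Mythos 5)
The paper does not prove this statement at all: it is quoted from Iarrobino--Kanev as a background result, so there is no in-paper argument to compare yours against; your proposal has to stand on its own. On its own terms it is essentially correct and assembles the standard ingredients: Gorenstein symmetry of $H(A_f)$, the theorem that a height-two Gorenstein ideal in the two-variable ring $\bbC[\partial_0,\partial_1]$ is a complete intersection, the socle-degree identity $\deg\alpha+\deg\beta-2=n$, and the Koszul resolution for the explicit Hilbert function; the dimension counts in (ii) are right ($\dim (I_f)_v=(v+1)-s=\dim S_{v-s}$, with injectivity of multiplication by $\alpha$ because the ambient ring is a domain), and coprimality of $\alpha,\beta$ follows from Artinianness exactly as you say. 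The one thing you should repair is the logical order. As written, part (i) asserts that the maximum of $H(A_f)$ is attained from degree $d-1$ onward and then computes the Hilbert function ``from the Koszul resolution of a complete intersection of degrees $s$ and $n+2-s$'' --- but the complete-intersection structure and those generator degrees only appear in your part (iii), which leans on (ii), which leans on the plateau in (i). The clean route is to prove (iii) first (Serre's theorem needs only that $A_f$ is Artinian Gorenstein of embedding dimension two, nothing about its Hilbert function), read off $\deg\alpha=d$ from the initial degree and $\deg\beta=n+2-d$ from the socle degree, and then obtain the plateau, $s=d$, and parts (i)--(ii) from the Koszul resolution. If instead you want to do (i) first, the plateau requires Macaulay's growth bound in two variables (once $h(i)\le i$, the Hilbert function of a quotient of $\bbC[x,y]$ is non-increasing) --- you correctly flag this as the subtle point but do not supply it. Finally, for $s=l(f)$ you can bypass the catalecticant argument: the paper's earlier lemma already identifies $l(f)$ with the initial degree of $I_f$, which equals $s$ once the Hilbert function is in hand.
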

 In this contest our principal tool will be the Apolarity Lemma (see \cite{iarrobinokanev}):
\begin{lem}[Apolarity Lemma ]\label{apolarity}

Let $p_1,...,p_s\in \bbP^r$, let $L_i=L_{p_i}$, let $P=\{[p_1],...,[p_s]\}\subset \bbP^{r}$ and let $\mathcal{I}_P$ be the homogeneous ideal in $T$ of polynomials vanishing on $P$. Then:
\begin{itemize}
\item[i)] For every $\phi \in R_e$: 
$$
\phi \circ (L^{[N]}_1+...+L^{[N]}_s)=\phi (p_1)L^{[N-E]}_1+...+\phi (p_s)L^{[N-E]}_s.
$$
\item[ii)] With respect to the contraction paring $T_n\times S_n\into \bbC$ one has:
$$
((\mathcal{I}_P)_n)^{\perp}=<L^{[N]}_1,...,L^{[N]}_s>.
$$
\item[iii)] The points $[p_1],...,[p_s]\subset \bbP^{r}$ impose independent conditions on the linear system $|\sO_{\bbP^{n}}(j)|$ if and only if $L^{[N]}_1,...,L^{[N]}_s$ are linearly independent.
\item[iv)] Suppose $s\leq \dim_{\bbC} T_{n-e}$ and the linear forms $L^{[N]}_1,...,L^{[N]}_s$ have the property that the corresponding set $P$ imposes independent conditions on the linear system $|\sO_{\bbP^{r}}(n-e)|$. Let $f=L^{[N]}_1+...+L^{[N]}_s$. Then we have for the apolar forms to $f$ of degree $e$ the equality:
$$
Ann(f)_e=(\mathcal{I}_P)_e.
$$
\end{itemize}  
\end{lem}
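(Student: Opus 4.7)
\emph{Proof plan.} The four parts are best proved in sequence, with (i) as the computational heart and (iv) as the essential payoff. By linearity in both $\phi$ and the summands it suffices, for (i), to verify the formula for a single monomial $\phi \in T_e$ and a single divided power $L^{[N]}$. Each contraction $\partial_j$ acts on a divided power of $L = \sum a_j x_j$ by $\partial_j \circ L^{[N]} = a_j L^{[N-1]}$; iterating $e$ such contractions converts the monomial $\phi \in T_e$ into the scalar $\phi$ evaluated on the coefficient vector of $L$ (i.e.\ at $p=[L]$), multiplied by $L^{[N-e]}$. Summing over the terms of $\phi$ and over the $L_i$ yields (i).

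Statement (ii) then follows by specializing (i) to the top degree: when $\phi$ has degree $n$, the factor $L_i^{[N-e]}$ collapses to $1$ and the contraction $\phi \circ L_i^{[N]}$ reduces to $\phi(p_i) \in \bbC$; orthogonality with respect to the contraction pairing then reads exactly as $\phi(p_i)=0$ for all $i$, i.e.\ $\phi \in (\mathcal{I}_P)_n$. For (iii), the evaluation map $T_n \to \bbC^s$, $\phi \mapsto (\phi(p_i))_i$, whose surjectivity \emph{is} the definition of $P$ imposing independent conditions on $|\sO_{\bbP^r}(n)|$, is transpose to the assembly map $(c_i) \mapsto \sum c_i L_i^{[N]}$ under the perfect contraction pairing $T_n \times S_n \to \bbC$; surjectivity of the former is thus equivalent to injectivity of the latter, which is the desired linear independence.

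The main step is (iv). The inclusion $(\mathcal{I}_P)_e \subseteq Ann(f)_e$ is immediate from (i): if $\phi(p_i)=0$ for all $i$, then $\phi \circ f = \sum_i \phi(p_i) L_i^{[N-e]} = 0$. The converse is where the independent-conditions hypothesis is used. Assume $\phi \in Ann(f)_e$; by (i), $\sum_i \phi(p_i) L_i^{[N-e]} = 0$ in $S_{n-e}$. The hypothesis that $P$ imposes independent conditions on $|\sO_{\bbP^r}(n-e)|$, together with (iii) applied in degree $n-e$, tells us that the divided powers $L_1^{[N-e]}, \dots, L_s^{[N-e]}$ are linearly independent in $S_{n-e}$. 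Hence every coefficient $\phi(p_i)$ must vanish, giving $\phi \in (\mathcal{I}_P)_e$. The only subtle point, and the place where one could be tempted to omit a hypothesis, is that the dimension bound $s \leq \dim T_{n-e}$ is precisely what makes independent conditions possible in degree $n-e$; without it the final linear independence step collapses and (iv) fails.
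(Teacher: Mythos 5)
Your proof is correct and is essentially the standard argument from Iarrobino--Kanev, which is the source the paper cites for this lemma without reproducing a proof: (i) by linearity and the divided-power rule $\partial_j\circ L^{[N]}=a_jL^{[N-1]}$, (ii) as the degree-$e=n$ specialization (plus the double-perpendicular for the perfect pairing, which you leave implicit), (iii) by transposing the evaluation map, and (iv) by combining (i) with the linear independence of the $L_i^{[N-e]}$ supplied by (iii) in degree $n-e$. No gaps; you also correctly locate where the hypotheses $s\leq\dim_{\bbC}T_{n-e}$ and the independent-conditions assumption enter.
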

\subsection{Normal Bundle of Rational Curves in $\bbP^{n-1}$}
It is easy to prove the following result:
\begin{teo}
Let $C_n\subset \bbP^n$ be the rational normal curve of degree $n$.
For any $p\in \bbP^n$, the rational curve $\pi_p(C_n)\subset
\bbP^{n-1}$ has normal bundle $N_{\pi_p(C_n);\bbP^{n-1}}
=\sO(n+2)^{n-4}\oplus\sO(n+3)^2$ if and only if $p$ is not on a
secant (or tangent) line to $C_n$. This is equivalent to saying that
$\pi_p(C_n)$ is smooth.
\end{teo}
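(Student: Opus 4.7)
The plan is to specialize diagram \eqref{diagr:normCODIMK} to the codimension-one case $k=1$, $L=\{p\}$ with $p=[a_0:\ldots:a_n]$. The rightmost column of that diagram becomes the short exact sequence
$$0 \to \sO_{\bbP^1} \xrightarrow{(q_0,\ldots,q_{n-2})} \sO_{\bbP^1}(2)^{n-1} \to N'_{\pi_p(C_n);\bbP^{n-1}}(-n) \to 0,$$
where $q_i(t,s)=a_it^2-2a_{i+1}ts+a_{i+2}s^2$. In the basis $(t^2,ts,s^2)$ of $H^0(\sO(2))$, the coefficient vectors of the $q_i$ are (up to a sign in the middle row) the columns of the $3\times(n-1)$ catalecticant $\Cat_f(n-2,2;2)$ of $f=\sum a_i x_0^{n-i}x_1^i\in S_n$, so $\dim\operatorname{span}\{q_i\}=\rank\Cat_f(n-2,2;2)=l(f)$ by Theorem \ref{rankcat}.

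The bridge to secants and tangents is apolarity: by Lemma \ref{GADLEMMA}, $l(f)\leq 2$ iff some degree-$2$ form $\phi\in T_2$ is apolar to $f$, and such a $\phi$ factors either as $L_1^{*}L_2^{*}$ (the secant case $f=c_1L_1^n+c_2L_2^n$) or as $(L^{*})^2$ (the tangent case $f=G\cdot L^{n-1}$). Hence $p$ lies off every secant and tangent to $C_n$ iff $l(f)=3$, which is also the classical criterion for $\pi_p\colon C_n\to\bbP^{n-1}$ to be injective and immersive, i.e.\ for $\pi_p(C_n)$ to be smooth.

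Next I would read off the splitting type via cohomology. Twisting by $\sO(-3)$ gives $0\to\sO(-3)\to\sO(-1)^{n-1}\to N'(-n-3)\to 0$ with $H^{*}(\sO(-1))=0$, so $h^0(N'(-n-3))=h^1(\sO(-3))=2$ holds unconditionally. Twisting by $\sO(-4)$ gives $0\to\sO(-4)\to\sO(-2)^{n-1}\to N'(-n-4)\to 0$ and reduces to the rank of
$$\partial\colon H^1(\sO(-4))\to H^1(\sO(-2))^{n-1},$$
i.e., $h^0(N'(-n-4))=3-\rank\partial$. By Serre duality, $\partial$ is dual to the multiplication map $H^0(\sO)^{n-1}\to H^0(\sO(2))$, $(c_i)\mapsto\sum c_iq_i$, whose image is $\operatorname{span}\{q_i\}$ of dimension $l(f)$. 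Thus $h^0(N'(-n-4))=3-l(f)$, which vanishes exactly when $l(f)=3$ and is $\geq 1$ whenever $p$ lies on a secant or tangent.

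Finally, combining $\rank N'=n-2$ and $\deg N'=n^2-2$ (from the Euler sequence) with $h^0(N'(-n-3))=2$ and $h^0(N'(-n-4))=0$ pins down the splitting: the second vanishing bounds $a_i\leq n+3$ for each summand $\sO(a_i)$; the value $2$ then forces exactly two summands to equal $\sO(n+3)$; and the remaining $n-4$, bounded by $n+2$ and averaging $n+2$, must all be $\sO(n+2)$. Conversely any other splitting of $N'$ violates $h^0(N'(-n-4))=0$, so the apolar condition $l(f)=3$ is both necessary and sufficient. The only real obstacle is the Serre-dual identification of $\partial$ with the $q_i$-multiplication map and the translation of $\dim\operatorname{span}\{q_i\}$ into the apolar length $l(f)$ via the catalecticant; once these are in place, everything else reduces to linear algebra on $\bbP^1$.
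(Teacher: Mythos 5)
Your argument is correct, and it is exactly the $k=1$ specialization of the machinery the paper develops (and applies explicitly only for $k=2$ and general $k$): identify $\ker N^L_{n,k}$ with the degree-$2$ apolar forms via the catalecticant, translate $\operatorname{rank}\Cat_f(n-2,2;2)$ into $h^0(N^{\vee}_{\pi_p(C_n);\bbP^{n-1}}(n+4))$, and read off the splitting from the degree and the lower bound $a_i\geq n+2$ coming from the surjection $\sO(2)^{n-1}\to N'(-n)$; the paper itself omits the proof of this theorem as ``easy.'' The only cosmetic slips are that $\rank\Cat_f(n-2,2;2)=\min(l(f),3)$ rather than $l(f)$, and correspondingly the smoothness condition is $l(f)\geq 3$ rather than $l(f)=3$, neither of which affects the argument.
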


\medskip

\subsection{Normal Bundle of Rational Curves in $\bbP^{n-2}$}

By diagram (\ref{diagr:normCODIMK}) in case of codimension $2$ 
since the map  $\sO_{C_n}(2)^{n-1}\into N_{\pi_{L}(C_n);\bbP^{n-2}}(-n)$ is surjective, it follows that $2\leq {n'}_0\leq ...\leq {n'}_{n-4}$ and ${n'}_0+ ...+ {n'}_{n-4}=2n-2$, where $N_{\pi_{L}(C_n);\bbP^{n-2}}(-n)=\sO({n'}_0)\oplus ...\oplus\sO({n'}_{n-4})$. But we can write ${n'}_i={n''}_i+2$, so ${n''}_0+ ...+ {n''}_{n-4}=4$.

Therefore we have ${n'}_0= ...= {n'}_{n-8}=2$, we can write:
$$
N_{\pi_2(C_n);\bbP^{n-2}}=\sO(n+2)^{n-7}\oplus \mathcal{F};
$$
where $\mathcal{F}$ is a rank 4 vector bundle on $\bbP^1$. Hence we must study the splitting of $\mathcal{F}$, if we indicate with $\mathcal{F}=\sO(f_0)\oplus...\oplus\sO(f_3)$, where $f_0+...+f_3=4n+14$. 
Therefore it is one of the following cases:

\begin{enumerate}
\item $\mathcal{F}=\sO(n+5)^4=:\mathcal{F}_1$;
\item $\mathcal{F}=\sO(n+4)\oplus\sO(n+5)^2\oplus\sO(n+6)=:\mathcal{F}_2$;
\item $\mathcal{F}=\sO(n+4)^2\oplus\sO(n+6)^2=:\mathcal{F}_3$;
\item $\mathcal{F}=\sO(n+4)^2\oplus\sO(n+5)\oplus\sO(n+7)=:\mathcal{F}_4$;
\item $\mathcal{F}=\sO(n+4)^3\oplus\sO(n+8)=:\mathcal{F}_5$;
\end{enumerate}
no other case can occur if the projection has only ordinary
singularities by Observation.\ref{obs:singularities}.

Since the codimension is $2$, by dualizing the last exact column of
(\ref{diagr:normCODIMK}) and tensorizing with $\sO_{\bbP^1}(2)$, we
get:
\begin{displaymath}
\xymatrix{
0 \ar[r] & N^{\dual}_{\pi_2(C_n);\bbP^{n-2}}(n+2)  \ar[r] & \sO_{C_n}^{n-1} \ar[r]^{\mathcal{N}^L_{n,2}} & \sO^2_{C_n}(2) \ar[r] & 0, }
\end{displaymath}
and we have $deg(N^{\dual}_{\pi_L(C_n);\bbP^{n-2}}(n+2))=-2$. But
$N^{\dual}_{\pi_L(C_n);\bbP^{n-2}}(n+2)=\sO^{n-7}\oplus\mathcal{F}^{\dual}(n+2)$,
so
$h^0(N^{\dual}_{\pi_L(C_n);\bbP^{n-2}}(n+2))=n-7+h^0(\mathcal{F}^{\dual}(n+2))$.
Therefore we have that $0\leq h^0(\mathcal{F}^{\dual}(n+2)) \leq 3$,
where:
\begin{enumerate}
\item[a)] $h^0(\mathcal{F}^{\dual}(n+2))=0 \Leftrightarrow \mathcal{F}=\mathcal{F}_1$;
\item[b)] $h^0(\mathcal{F}^{\dual}(n+2))=1 \Leftrightarrow \mathcal{F}=\mathcal{F}_2 $;
\item[c)] $h^0(\mathcal{F}^{\dual}(n+2))=2 \Leftrightarrow \mathcal{F}=\mathcal{F}_3\;\; or\;\;\mathcal{F}_4$;
\item[d)] $h^0(\mathcal{F}^{\dual}(n+2))=3 \Leftrightarrow \mathcal{F}=\mathcal{F}_5 $.
\end{enumerate}

So we have the following cases:
\begin{enumerate}
\item[A)] $\rank(N^L_{n,2})=6\Leftrightarrow h^0(\mathcal{F}^{\dual}(2))=0 \Leftrightarrow \mathcal{F}=\mathcal{F}_1  $;
\item[B)] $\rank(N^L_{n,2})= 5 \Leftrightarrow h^0(\mathcal{F}^{\dual}(2))=1 \Leftrightarrow \mathcal{F}=\mathcal{F}_2 $;
\item[C)] $\rank(N^L_{n,2})=4\Leftrightarrow h^0(\mathcal{F}^{\dual}(2))=2 \Leftrightarrow \mathcal{F}=\mathcal{F}_3\;\; or\;\;\mathcal{F}_4 $;
\item[D)] $\rank(N^L_{n,2})=3\Leftrightarrow h^0(\mathcal{F}^{\dual}(2))=3 \Leftrightarrow \mathcal{F}=\mathcal{F}_5 $.
\end{enumerate}
If we consider the following exact sequence:
\begin{displaymath}
\xymatrix{
0 \ar[r] & N^{\dual}_{\pi_L(C_n);\bbP^{n-2}}(n+3)  \ar[r] & \sO_{C_n}^{n-1}(1) \ar[r]^{\mathcal{N}^L_{n,2}} & \sO^2_{C_n}(3) \ar[r] & 0, }
\end{displaymath}
we have that:
\begin{enumerate}
\item[C1)] $\rank(N^L_{n,2}(1))=3\Leftrightarrow h^0(\mathcal{F}^{\dual}(3))=8 \Leftrightarrow \mathcal{F}=\mathcal{F}_3$;
\item[C2)] $\rank(N^L_{n,2}(1))\leq4\Leftrightarrow h^0(\mathcal{F}^{\dual}(3))=7 \Leftrightarrow \mathcal{F}=\mathcal{F}_4$.

\end{enumerate}

\normalsize

\begin{propo}\label{apolaritycorollaryN}
If there exist two points $q_1,q_2\in L$, each of them belonging to
a different $3-$secant $\bbP^2$, then:
$$
\ker(N^L_{n,2})=(\mathcal{I}_{D_1}\cap\mathcal{I}_{D_2})_{n-2}=(\mathcal{I}_{D_1\cup D_2})_{n-2},
$$
where $D_i$ is the set of points in $\bbP^1$ which corresponds to
the linear forms in the additive decomposition of $f_i$ (the binary form corresponding to $q_i$).
\end{propo}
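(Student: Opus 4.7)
The plan is to recognize the matrix $\mathcal{N}^L_{n,2}$ as the contraction (apolarity) pairing in degree $n-2$ against the two binary forms $f_1,f_2\in S_n$ representing $q_1,q_2$, and then to apply the Apolarity Lemma \ref{apolarity}(iv) to each $f_i$ separately.

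First, I would make explicit the identification
$H^0(\sO_{\bbP^1}^{n-1})=\bbC^{n-1}\cong T_{n-2}$
obtained by sending the standard tuple $(c_0,\dots,c_{n-2})$ to $\phi=\sum_j c_j\,\partial_0^{\,n-2-j}\partial_1^{\,j}$ (with whatever diagonal rescaling makes the divided-power contraction come out cleanly). Comparing the $(i,j)$-entry $a^i_j t^2-2a^i_{j+1}ts+a^i_{j+2}s^2$ of $(\mathcal{N}^L_{n,2})^t$ with $\partial_0^{\,n-2-j}\partial_1^{\,j}\circ f_i$ for $f_i=\sum_l a^i_l\,x_0^{[n-l]}x_1^{[l]}$ shows that the two agree up to a fixed invertible diagonal. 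Summing over $j$ this translates the vanishing of $\mathcal{N}^L_{n,2}\cdot(c_0,\dots,c_{n-2})$ into $\phi\circ f_1=0$ and $\phi\circ f_2=0$, i.e.
$$
\ker(\mathcal{N}^L_{n,2})=Ann(f_1)_{n-2}\cap Ann(f_2)_{n-2}.
$$

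Second, since $q_i$ lies by hypothesis on a $3$-secant $\bbP^2$ to $C_n$, the form $f_i$ has an additive decomposition $f_i=L_{i,1}^{[n]}+L_{i,2}^{[n]}+L_{i,3}^{[n]}$, and $D_i=\{[L_{i,1}],[L_{i,2}],[L_{i,3}]\}\subset \bbP^1$. To apply Lemma \ref{apolarity}(iv) with $e=n-2$ one checks $|D_i|=3\le\dim T_2=3$ and that $D_i$ imposes independent conditions on $|\sO_{\bbP^1}(2)|$, which holds automatically because any three distinct points of $\bbP^1$ are in general position with respect to conics. Thus
$$
Ann(f_i)_{n-2}=(\mathcal{I}_{D_i})_{n-2},\qquad i=1,2,
$$
and intersecting gives
$$
\ker(\mathcal{N}^L_{n,2})=(\mathcal{I}_{D_1})_{n-2}\cap(\mathcal{I}_{D_2})_{n-2}=(\mathcal{I}_{D_1}\cap\mathcal{I}_{D_2})_{n-2}=(\mathcal{I}_{D_1\cup D_2})_{n-2},
$$
which is the claim.

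The main obstacle is entirely in the first step, namely in pinning down the dictionary that turns the explicit matrix $\mathcal{N}^L_{n,2}$ into the apolarity contraction: one has to reconcile the parameters $(s,t)$ on the rational normal curve with the coordinates $(x_0,x_1)$ on $V$, choose a compatible basis of $T_{n-2}$, and keep track of the binomial/divided-power factors that appear. This is bookkeeping rather than substantive mathematics, and once the conventions are fixed the proposition follows immediately from two applications of the Apolarity Lemma together with the elementary identity $\mathcal{I}_{D_1}\cap\mathcal{I}_{D_2}=\mathcal{I}_{D_1\cup D_2}$ for reduced zero-dimensional subschemes of $\bbP^1$.
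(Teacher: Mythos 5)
Your argument is correct and is exactly the intended one: the paper states this proposition without a written proof, but your two ingredients --- the identification $\ker(N^L_{n,2})=\bigcap_i\ker Cat_{f_i}(2,n-2)=Ann(f_1)_{n-2}\cap Ann(f_2)_{n-2}$ (which the paper itself records verbatim in a later observation) and Lemma \ref{apolarity}(iv) applied with $e=n-2$, $s=3\le\dim_{\bbC}T_2$ --- are precisely the tools the paper sets up for this purpose, and the rest is the elementary identity $\mathcal{I}_{D_1}\cap\mathcal{I}_{D_2}=\mathcal{I}_{D_1\cup D_2}$ for reduced point sets. The only hypotheses worth making explicit are that $q_1,q_2$ span $L$ (so the kernel is the intersection of just these two annihilators) and that each $f_i$ admits an honest additive decomposition with three distinct roots, i.e.\ $q_i$ does not lie on a secant or tangent line inside its $3$-secant $\bbP^2$, which is what makes $D_i$ a reduced scheme imposing independent conditions on $|\sO_{\bbP^1}(2)|$.
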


\subsubsection{Case $\rank N^L_{n,2}=3$}

\begin{lem}
The splitting type of the normal bundle $N_{\pi_{L}(C_n),\bbP^{n-2}}$ is:
$$
((n+2)^{n-4},n+6)
$$
if and only if $\rank(N^L_{n,2})=3$.
\end{lem}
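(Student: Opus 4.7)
The plan is to translate the rank hypothesis into a numerical condition on $h^0$ of an appropriate twist of the conormal bundle, and then use the upper bound on summand degrees coming from the ordinary-singularities assumption to promote that datum into a complete splitting.

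The first step is to take global sections in the short exact sequence
\[ 0 \to N^{\dual}_{\pi_L(C_n);\bbP^{n-2}}(n+2) \to \sO_{C_n}^{n-1} \xrightarrow{\mathcal{N}^L_{n,2}} \sO^2_{C_n}(2) \to 0 \]
already displayed above. This gives
\[ \rank(\mathcal{N}^L_{n,2}) = (n-1) - h^0\bigl(N^{\dual}_{\pi_L(C_n);\bbP^{n-2}}(n+2)\bigr), \]
so the hypothesis $\rank(\mathcal{N}^L_{n,2})=3$ is equivalent to $h^0(N^{\dual}(n+2))=n-4$. The \emph{if} direction of the lemma is then immediate: for $N=\sO(n+2)^{n-4}\oplus\sO(n+6)$ one computes $N^{\dual}(n+2)=\sO^{n-4}\oplus\sO(-4)$, whose $h^0$ is $n-4$.

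For the converse I would write $N^{\dual}(n+2)=\bigoplus_{i=0}^{n-4}\sO(m_i)$ and combine two pieces of data. The first is the degree computation $\sum m_i = -4$, obtained from $\deg N = n^2-n-2$ and $\rank N = n-3$. The second, and crucial, input is the surjection $\sO_{C_n}(2)^{n-1}\twoheadrightarrow N(-n)$ from diagram (\ref{diagr:normCODIMK}) --- available precisely because $\pi_L(C_n)$ has only ordinary singularities, by Observation \ref{obs:singularities} --- which forces every summand of $N(-n)$ to have degree at least $2$, equivalently $m_i\leq 0$ for all $i$. Under this sign constraint each $\sO(m_i)$ contributes $1$ to $h^0$ when $m_i=0$ and $0$ otherwise, so $h^0(N^{\dual}(n+2))$ counts exactly the number of zero summands. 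The equality $h^0 = n-4$ then forces $n-4$ of the $m_i$ to vanish, and the remaining single summand to absorb all of the degree $-4$. Twisting back yields $N=\sO(n+2)^{n-4}\oplus\sO(n+6)$.

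The only non-routine step is the sign bound $m_i \leq 0$: without the ordinary-singularities hypothesis, a decomposition such as $\sO^{n-6}\oplus\sO(1)\oplus\sO(-1)\oplus\sO(-4)$ would be equally compatible with the rank, degree and $h^0$ data, and the argument would break. The rank-$3$ case is extremal in exactly the sense that this sign bound collapses the combinatorial possibilities to the single splitting type claimed.
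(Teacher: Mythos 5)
Your proof is correct and follows essentially the same route as the paper: the paper also obtains the splitting from the identity $\rank(N^L_{n,2})=n-1-h^0(N^{\vee}_{\pi_L(C_n);\bbP^{n-2}}(n+2))$ together with the lower bound $n_i'\geq 2$ on the summands of $N(-n)$ coming from the surjection $\sO_{C_n}(2)^{n-1}\twoheadrightarrow N(-n)$ (it just enumerates the five admissible splitting types first and then matches $h^0$ values, whereas you derive the unique splitting directly from the degree, sign, and $h^0$ constraints). No gaps.
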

\begin{teo}[Case Rank 3]
If the projection line $L$ belongs to some $3-$secant $\bbP^2$, but it is not a secant line, then the splitting type of the normal bundle $N_{\pi_{L}(C_n),\bbP^{n-2}}$ is:
$$
((n+2)^{n-4},n+6).
$$

\end{teo}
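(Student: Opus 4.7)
The strategy is to reduce the theorem to the rank computation $\rank(\mathcal{N}^L_{n,2})=3$ via the previous lemma, and then to compute this rank by an apolarity argument in the spirit of Proposition~\ref{apolaritycorollaryN}.

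First I would interpret the map $\mathcal{N}^L_{n,2}$ coming from the diagram (\ref{diagr:normCODIMK}) on the level of contractions: a vector $\phi=(c_0,\dots,c_{n-2}) \in \bbC^{n-1}$ corresponds to a form of degree $n-2$ in the contracting polynomial ring $T$, and the two entries of $\mathcal{N}^L_{n,2}(\phi)$ evaluated at a point $q=(a^1,a^2)\in L$ are exactly the coefficients of $\phi\circ f_q$, where $f_q \in S_n=\sym^n V^*$ is the binary form attached to $q$ under $\bbP^n \isom \bbP(\sym^n V)$. Hence
$$
\ker(\mathcal{N}^L_{n,2}) \,=\, \bigcap_{q\in L}\mathrm{Ann}(f_q)_{n-2}.
$$

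Next I would apply the $3$-secant hypothesis. If $L\subset \Pi$ with $\Pi$ spanned by three distinct Veronese points $L_1^n,L_2^n,L_3^n\in C_n$, then every $f_q$ with $q\in L$ is a linear combination of $L_1^n,L_2^n,L_3^n$, so
$$
\ker(\mathcal{N}^L_{n,2}) \,\supseteq\, \bigcap_{i=1}^{3}\mathrm{Ann}(L_i^n)_{n-2} \,=\, (\mathcal{I}_D)_{n-2},
$$
where $D=\{[L_1],[L_2],[L_3]\}\subset \bbP^1$. Since three distinct points of $\bbP^1$ impose independent conditions on forms of degree $n-2\geq 2$, one gets $\dim(\mathcal{I}_D)_{n-2}=(n-1)-3=n-4$, and therefore $\rank(\mathcal{N}^L_{n,2})\leq 3$.

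To conclude, I would use that $L$ is not a secant line to ensure $\pi_L(C_n)$ has only ordinary singularities (the ordinary triple point produced by the three collinear points $C_n\cap\Pi$, together with at most nodes from generic secants meeting $L$). The case analysis carried out just before the statement then restricts $\rank(\mathcal{N}^L_{n,2})$ to $\{3,4,5,6\}$; combined with the upper bound above, this forces equality, and the previous lemma delivers the splitting $((n+2)^{n-4},n+6)$. The main technical point I expect is the identification of $\ker(\mathcal{N}^L_{n,2})$ with the apolar space of the linear family $L\subset\sym^n V$, which is the content of Proposition~\ref{apolaritycorollaryN} in the degenerate case $D_1=D_2=D$; once this is in place, the dimension count follows directly from the Apolarity Lemma~\ref{apolarity}(iv) and is routine.
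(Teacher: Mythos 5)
Your proposal is correct and follows essentially the same route as the paper: both reduce the theorem to showing $\rank(\mathcal{N}^L_{n,2})=3$ via the preceding lemma and obtain the dimension count $\dim\ker(\mathcal{N}^L_{n,2})=n-4$ by apolarity with respect to the three points of secancy (your $(\mathcal{I}_D)_{n-2}$ is exactly the paper's $\langle\alpha\rangle_{n-2}$). The only cosmetic difference is that the paper gets the rank exactly in one step, writing $Ann(f_i)=(\alpha,\beta_i)$ with $\deg\alpha=3$, $\deg\beta_i=n-1$ for two points spanning $L$ so that $(\alpha,\beta_i)_{n-2}=\langle\alpha\rangle_{n-2}$, whereas you prove $\rank\leq 3$ by the inclusion of ideals and then import the lower bound $\rank\geq 3$ from the case analysis preceding the statement; both steps are legitimate.
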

\begin{proof}
If $L$ belongs to a $3-$secant $\bbP^2$, then there exist two points
$q_1,q_2\in L$ and their corresponding binary forms $f_1,f_2$ of degree $n$  such that $Ann(f_1)=(\alpha,\beta_1)$ and
$Ann(f_2)=(\alpha,\beta_2)$ where $\alpha$ has only simple roots,
 $\deg(\alpha)=3$ and $\beta_1\neq\beta_2,\;\;
\deg(\beta_1)=\deg(\beta_2)=n-1$. So $\dim <\alpha>_{n-2}=n-4$ and
$<\alpha,\beta_1>_{n-2}=<\alpha,\beta_2>_{n-2}=<\alpha>_{n-2}$.
Therefore
$\rank(N^L_{n,2})=n-1-\dim(<\alpha,\beta_1>_{n-2}\cap<\alpha,\beta_2>_{n-2})=3$.

\end{proof}
\begin{cor}
The variety of lines $L$ such that $\pi_L$ gives a rational curve of
degree $n$ in $\bbP^{n-2}$ for which  the splitting type of the
normal bundle $N_{\pi_{L}(C_n),\bbP^{n-2}}$ is:
$$
((n+2)^{n-4},n+6)
$$
has an irreducible subvariety of codimension $(2n-7)$ in
$Gr(\bbP^1,\bbP^n)$, that is formed by the lines belonging to some
$3-$secant $\bbP^2$, but which are not secant lines.
\end{cor}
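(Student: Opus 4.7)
My plan is to realize the claimed subvariety as the image of a natural incidence correspondence fibered over the space of $3$-secant planes, where a routine fiber-bundle argument yields the codimension $2n-7$.

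Since any three distinct points of $C_n$ are in linearly general position, the family $\sP$ of $3$-secant $\bbP^2$'s to $C_n$ is parameterized birationally by $\sym^3(\bbP^1)\cong\bbP^3$, so $\sP$ is irreducible of dimension $3$. I would form the incidence variety
$$I=\{(L,\Pi)\in Gr(\bbP^1,\bbP^n)\times\sP : L\subset \Pi\}.$$
The projection $I\to\sP$ is a Grassmann bundle whose fiber over $\Pi$ is $Gr(\bbP^1,\Pi)\cong\bbP^{2\vee}$, a $\bbP^2$. Hence $I$ is irreducible of dimension $3+2=5$.

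By the previous theorem, the image of $I$ in $Gr(\bbP^1,\bbP^n)$ is exactly the locus of lines lying on some $3$-secant plane; outside the closed sublocus of secant lines (a $3$-dimensional piece of $I$, parameterized by a secant line and a free choice of third point on $C_n$) the splitting is $((n+2)^{n-4},n+6)$ as claimed. Thus the stated subvariety is the dominant image of an open subset of $I$, and therefore irreducible.

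It remains to show that the projection $p:I\to Gr(\bbP^1,\bbP^n)$ is generically finite, so the image has dimension $5$, i.e.\ codimension $(2n-2)-5=2n-7$ in the Grassmannian. I would argue by upper semicontinuity of fiber dimension: it suffices to produce a single $L_0$ whose fiber $p^{-1}(L_0)$ is finite. Geometrically, each $3$-secant plane through $L_0$ projects to a distinct singular (indeed triple) point of the rational curve $\pi_{L_0}(C_n)\subset\bbP^{n-2}$; since a reduced curve has finitely many singular points, only finitely many $3$-secant planes of $C_n$ pass through $L_0$. The main obstacle is pinning down this genericity step carefully, ruling out the possibility that along a special stratum the fiber becomes positive-dimensional; concretely this can be done by exhibiting an explicit $L_0$ (for instance by picking three very general points of $C_n$ and a very general line in their span) lying on only one $3$-secant plane. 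Once this is in hand, the corollary follows immediately.
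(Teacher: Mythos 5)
Your proposal is correct and follows essentially the same route as the paper: the analogous codimension statements there (e.g.\ the lemma computing codimension $2n-10$ for lines on a $4$-secant $\bbP^3$) are proved by exactly this incidence-variety dimension count $\dim Gr(\bbP^1,\bbP^n)-\dim I=(2n-2)-(3+2)=2n-7$, and this corollary is left as an implicit instance of that technique combined with the preceding rank-$3$ theorem. The only substantive difference is that where the paper justifies generic finiteness of $I\to Gr(\bbP^1,\bbP^n)$ by appealing to the Chiantini--Ciliberto non-defectivity result for Grassmannians of secant varieties, you argue directly that distinct $3$-secant planes through $L_0$ project to distinct triple points of the reduced curve $\pi_{L_0}(C_n)$, of which there are finitely many; this is sound (it in fact bounds every fiber over a line with $L_0\cap C_n=\emptyset$ and birational projection, so the genericity worry you raise at the end is already handled) and makes the argument self-contained.
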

\subsubsection{Case $\rank N^L_{n,2}=4$}
\begin{lem}
The codimension in $Gr(\bbP^1,\bbP^n)$ of the variety of all lines
in $\bbP^n$ belonging to some 4-secant  $\bbP^3$ to the rational
normal curve in $\bbP^n$  is $2n-10$
\end{lem}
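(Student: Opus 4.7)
The strategy is to realize the locus as the image of an incidence variety and extract its dimension from a fiber count. Let $V$ denote the variety of $4$-secant $\bbP^3$'s to $C_n$. Since a generic $4$-tuple of distinct points on $C_n \cong \bbP^1$ spans a $\bbP^3$, $V$ is birational to $\sym^4(\bbP^1) \cong \bbP^4$, so $\dim V = 4$. Form the incidence
\begin{equation*}
I = \{(L, \Pi) \in Gr(\bbP^1, \bbP^n) \times V : L \subset \Pi\},
\end{equation*}
so that the projection $q : I \to V$ is a $Gr(\bbP^1, \bbP^3)$-fibration of relative dimension $4$, whence $I$ is irreducible of dimension $8$. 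The target locus is $\overline{p(I)}$, where $p : I \to Gr(\bbP^1, \bbP^n)$; its codimension in $Gr(\bbP^1, \bbP^n)$ (of dimension $2n-2$) equals $2n-10$ precisely when $p$ is generically finite. I would verify the latter by exhibiting one point $L_0 \in p(I)$ with $p^{-1}(L_0) = \{(L_0, \Pi_0)\}$.

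Take generic $\Pi_0 \in V$ and a generic line $L_0 \subset \Pi_0$. Suppose $L_0 \subset \Pi_1$ for some $\Pi_1 \neq \Pi_0$ in $V$, and let $s \in \{0,1,2,3\}$ be the number of secant points $\Pi_0$ and $\Pi_1$ share on $C_n$. The union $\Pi_0 + \Pi_1$ then contains $8-s$ distinct points of $C_n$; since any $k+1 \leq n+1$ distinct points on a rational normal curve are in general linear position, these span a $\bbP^{7-s}$. On the other hand $\Pi_0 \cap \Pi_1$ contains $L_0$ together with the $\bbP^{s-1}$ spanned by the $s$ common secant points; for $L_0$ generic these two pieces have joint span of dimension $\geq s+1$, forcing $\dim(\Pi_0 + \Pi_1) \leq 5-s$, in contradiction with the lower bound $7-s$. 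The only way to escape is that a common secant point lies on $L_0$, or that $L_0$ lies in the $3$-secant $\bbP^2$ spanned by three of them (only possible when $s=3$). Tracking the cases, these degenerations amount to $L_0$ meeting $C_n$, $L_0$ being a secant line, or $L_0$ being contained in a $3$-secant $\bbP^2$. None of these happens for $L_0$ generic in $\Pi_0$: a generic line in $\bbP^3$ avoids four prescribed points, is not spanned by two of them, and is not contained in any of the four planes they determine. Hence $\Pi_0$ is the unique $4$-secant $\bbP^3$ containing $L_0$.

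Thus $p$ is generically finite, $\dim \overline{p(I)} = 8$, and the codimension is $2n-10$. The main obstacle is the dimension bookkeeping inside the middle paragraph: precisely tracking when the lower bound $\dim(\Pi_0 \cap \Pi_1) \geq s+1$ fails and verifying that the failures are exactly the exceptional geometric positions of $L_0$ relative to $C_n$, so that a generic $L_0 \subset \Pi_0$ robustly avoids them.
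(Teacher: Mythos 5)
Your proposal is correct and follows the same skeleton as the paper: both form the incidence variety $I=\{(L,\Pi): L\subset \Pi\}$ over the family of $4$-secant $\bbP^3$'s and compute the codimension of the image in $Gr(\bbP^1,\bbP^n)$ as $(2n-2)-(4+4)=2n-10$. The difference lies entirely in how the one nontrivial step --- that the projection $p:I\to Gr(\bbP^1,\bbP^n)$ is generically finite, so that the expected dimension is actually attained --- is justified. The paper outsources this to the Chiantini--Ciliberto theorem on non-defectivity of Grassmannians of secant varieties of curves, whereas you prove it directly by showing a generic line in a generic $4$-secant $\bbP^3$ lies in no other such $\bbP^3$, using only the general-position property of points on a rational normal curve. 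Your route is more elementary and self-contained, at the cost of the case analysis; the paper's citation buys uniformity across all the analogous lemmas (for $(n-r-1)$-secant $\bbP^{n-r-2}$'s) without repeating such arguments. Two small points to tidy: in the case $s=3$ the claimed bound $\dim\,\mathrm{span}(L_0\cup\bbP^{s-1})\geq s+1=4$ cannot hold inside $\Pi_0\cong\bbP^3$; the correct generic bound is $3$, which still forces $\Pi_0\cap\Pi_1=\Pi_0=\Pi_1$ and hence the desired contradiction. And in the case $s=2$ the degeneration to track is ``$L_0$ meets the chord through the two common points,'' which is not literally one of your three listed exceptional positions, though it is equally avoided by a generic line (and in fact still yields a contradiction for $n\geq 5$ since the remaining six points span a $\bbP^5$). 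Neither issue affects the conclusion.
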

\begin{proof}
In fact we can consider the incidence variety $I=\{(L,\pi):L\in
Gr(\bbP^1,\bbP^n),\pi\in S, L\subset S\}$ where $S$ is the set of
all 4-secant $\bbP^3$. In the usual way we can compute the
codimension of the image of this incidence variety in
$Gr(\bbP^1,\bbP^n)$. That calculation is effective thanks to the
result of Chiantini and Ciliberto on the non-defectivity of the
Grassmannians of secant varieties of curves (see
\cite{chiantiniciliberto}).
\end{proof}
\begin{lem}
If the splitting type of the normal bundle $N_{\pi_{L}(C_n),\bbP^{n-2}}$ is:
$$
((n+2)^{n-5},(n+4)^2),
$$
then $\rank(N^L_{n,2})=4$. Moreover  the variety which parameterizes
the lines giving the above splitting has codimension $2(n-5)$ in
$Gr(\bbP^1,\bbP^n)$.
\end{lem}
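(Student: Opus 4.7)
My plan is to split the argument into the rank statement (a direct cohomology check) and the codimension statement (the main work, via an apolarity identification with the 4-secant $\bbP^3$ locus).

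For the rank, I dualize the hypothesized splitting and twist by $\sO(n+2)$ to obtain
$$N^{\dual}_{\pi_L(C_n);\bbP^{n-2}}(n+2) \isom \sO^{n-5} \oplus \sO(-2)^2,$$
which has $n-5$ global sections. Feeding this into the global-section sequence of
$$0 \to N^{\dual}(n+2) \to \sO_{C_n}^{n-1} \to \sO_{C_n}^2(2) \to 0$$
forces $\rank \mathcal{N}^L_{n,2} = (n-1)-(n-5) = 4$, placing us in case (C) of the earlier trichotomy.

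For the codimension, the strategy is to identify this locus with (an open subset of) the locus of lines contained in some 4-secant $\bbP^3$ to $C_n$, which has codimension $2n-10 = 2(n-5)$ by the previous lemma. Let $f_1, f_2 \in \sym^n V$ generate the pencil corresponding to $L$. The rank computation yields $\dim (Ann(f_1) \cap Ann(f_2))_{n-2} = n-5$. Using Theorem \ref{completeintersection} together with the Apolarity Lemma \ref{apolarity}, I would argue that this common apolar ideal is exactly the degree-$(n-2)$ piece of $\sI_D$ for a reduced four-point subscheme $D \subset \bbP^1$, and that the span of $\nu_n(D) \subset C_n$ is a 4-secant $\bbP^3$ through $L$. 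Conversely, if $L$ sits inside a 4-secant $\bbP^3$ with vertex set $D$, then $(\sI_D)_{n-2}$ of dimension $n-5$ lies in $\ker \mathcal{N}^L_{n,2}$, so $\rank \leq 4$; a generic such $L$ is not contained in any 3-secant $\bbP^2$ (codimension $2n-7 > 2n-10$), so the rank is exactly $4$.

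The delicate point, and main obstacle, is to rule out the competing rank-$4$ splitting $\mathcal{F}_4$ for generic $L$ in a 4-secant $\bbP^3$. The plan is to exploit the (C1)/(C2) dichotomy from the earlier case analysis: $\mathcal{F}_4$ is distinguished from $\mathcal{F}_3$ by an additional drop in $\rank \mathcal{N}^L_{n,2}(1)$, which amounts to the vanishing of an extra minor and hence cuts out a proper subvariety of the 4-secant stratum. Consequently the $\mathcal{F}_3$ locus is open in the irreducible 4-secant $\bbP^3$ locus and inherits its codimension $2(n-5)$ in $Gr(\bbP^1, \bbP^n)$.
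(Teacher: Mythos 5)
The paper states this lemma without any proof, so there is no argument of the author's to compare against; judged on its own, the first half of your proposal is correct and complete, while the second half has two genuine gaps. The rank statement is fine: $h^0(N^{\dual}_{\pi_L(C_n);\bbP^{n-2}}(n+2))=h^0(\sO^{n-5}\oplus\sO(-2)^2)=n-5$, so $\rank(N^L_{n,2})=(n-1)-(n-5)=4$, exactly as the paper's earlier case analysis requires.

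For the codimension, the first gap is the converse identification ``$\rank=4$ $\Rightarrow$ $L$ lies in a $4$-secant $\bbP^3$''. You only say you ``would argue'' that the $(n-5)$-dimensional space $K_{n-2}=(Ann(f_1)\cap Ann(f_2))_{n-2}$ equals $(\mathcal{I}_D)_{n-2}$ for a reduced length-$4$ divisor $D$. What is actually needed: $K$ is a homogeneous ideal of $T=\bbC[\partial_0,\partial_1]$, so $K_{n-2}=g\cdot W$ with $g=\gcd(K_{n-2})$ of some degree $d$ and $W\subseteq T_{n-2-d}$; the dimension count forces $d\le 4$, one must rule out $d\le 3$ (which would put $L$ in a $3$-secant plane) and then handle the case where $g$ has a multiple root, where Lemma \ref{GADLEMMA} yields only a GAD and the resulting $\bbP^3$ is spanned by a non-reduced divisor -- none of this is in your writeup. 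The second, more serious gap is the last step: you assert that the $\mathcal{F}_4$-condition (the extra drop of $\rank \mathcal{N}^L_{n,2}(1)$) ``cuts out a proper subvariety'' of the $4$-secant stratum, but a closed determinantal condition on an irreducible variety is either proper or identically satisfied, and you offer no witness. You must exhibit at least one line $L$ inside a $4$-secant $\bbP^3$ whose normal bundle actually splits as $((n+2)^{n-5},(n+4)^2)$ rather than as the competing rank-$4$ splitting $\mathcal{F}_4$; without such an example your argument is equally consistent with the locus in the statement being empty or of larger codimension. A route that bypasses both gaps is to invoke the known result (Ghione--Sacchiero, or Ramella as the paper does for the restricted tangent bundle) that a nonempty normal-bundle stratum $(n_1,\dots,n_{n-3})$ has the expected codimension $\sum_{i<j}\max(0,n_j-n_i-1)$, which for $((n+2)^{n-5},(n+4)^2)$ is exactly $2(n-5)$; but then nonemptiness still has to come from an explicit example or from the secant-locus construction you outline.
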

\begin{teo}[Case Rank 4]
If the projection line $L$ belongs to some  $4-$secant $\bbP^3$, but
it does not belong to some $3-$secant $\bbP^2$, then the splitting
type of the normal bundle $N_{\pi_{L}(C_n),\bbP^{n-2}}$ is:
$$
((n+2)^{n-5},(n+4)^2).
$$

\end{teo}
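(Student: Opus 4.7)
The plan is to adapt the rank-$3$ argument by upgrading the common apolar generator from degree $3$ to degree $4$, and then to pin down the splitting among the two rank-$4$ possibilities via the codimension counts already established.

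First I would pick two generic points $q_1,q_2\in L$ and apply Theorem \ref{completeintersection} together with the Apolarity Lemma. Since $L$ lies in a $4$-secant $\bbP^3$ through four distinct points of $C_n$ but in no $3$-secant $\bbP^2$, each length $l(f_{q_i})=4$, and the degree-$4$ simple-root form $\alpha\in T_4$ vanishing on the four points of $\bbP^1$ corresponding to the spanning tuple is apolar to each $f_{q_i}$; so $Ann(f_{q_i})=(\alpha,\beta_i)$ with $\deg\beta_i=n-2$ and $\gcd(\alpha,\beta_i)=1$, the generator $\alpha$ being the same for every $q\in L$.

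Next comes the dimension count, formally identical to the rank-$3$ proof but with $\deg\alpha=4$: $\dim\langle\alpha\rangle_{n-2}=n-5$, $\dim\langle\alpha,\beta_i\rangle_{n-2}=n-4$, and for generic $q_1,q_2\in L$ (in particular away from the closed locus of combinations of length $\leq 3$) the forms $\beta_1,\beta_2$ are linearly independent modulo $\alpha T_{n-6}$. Hence $\langle\alpha,\beta_1\rangle_{n-2}\cap\langle\alpha,\beta_2\rangle_{n-2}=\langle\alpha\rangle_{n-2}$ has dimension $n-5$, and Proposition \ref{apolaritycorollaryN} yields $\rank(N^L_{n,2})=(n-1)-(n-5)=4$. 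This places the splitting in Case C, i.e., either $((n+2)^{n-5},(n+4)^2)$ or the further specialization $((n+2)^{n-5},n+3,n+5)$.

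The main obstacle is excluding the second splitting. Here I would combine the rank-$4$ conclusion with codimension: by the first lemma of this subsection, the locus of lines on some $4$-secant $\bbP^3$ (intersected with the open condition ``not on any $3$-secant $\bbP^2$'') is irreducible of codimension $2(n-5)$ in $Gr(\bbP^1,\bbP^n)$, and by the preceding lemma the $((n+2)^{n-5},(n+4)^2)$-locus has the same codimension $2(n-5)$. Both sit inside the Case C stratum, while the competing splitting $((n+2)^{n-5},n+3,n+5)$ forces the extra vanishing $h^1(N^{\vee}(n+3))\geq 1$, a proper closed condition inside Case C. Upper-semicontinuity of splitting type then forces the $4$-secant locus into the more balanced stratum, giving $N_{\pi_L(C_n);\bbP^{n-2}}\isom\sO(n+2)^{n-5}\oplus\sO(n+4)^2$, as claimed.
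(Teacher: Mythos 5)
Your rank computation coincides with the paper's own proof: pick $q_1,q_2\in L$ with $Ann(f_i)=(\alpha,\beta_i)$, $\deg\alpha=4$, $\deg\beta_i=n-2$, observe $\dim\langle\alpha\rangle_{n-2}=n-5$ and that $\beta_1\equiv c\beta_2 \pmod{(\alpha)}$ would force $Ann(f_1)=Ann(f_2)$ and hence $q_1=q_2$, so $\rank(N^L_{n,2})=4$. The paper stops exactly there. You are right that this is not yet the theorem: by the paper's own case analysis (Case C), rank $4$ is compatible with both $((n+2)^{n-5},(n+4)^2)$ and $((n+2)^{n-5},n+3,n+5)$, so an additional argument is genuinely required, and your attempt to supply one goes beyond what the paper writes down.

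However, the additional step you propose does not work as stated, for two reasons. First, to conclude that the irreducible $4$-secant locus $\Sigma$ (of codimension $2n-10$) is not entirely contained in the unbalanced stratum, you need the unbalanced stratum to have codimension strictly larger than $2n-10$; the lemmas you cite only give the codimension of $\Sigma$ and of the balanced stratum, so this inequality is asserted rather than proved. Second, and more seriously, semicontinuity points in the wrong direction for your conclusion: $h^0(N^{\vee}(n+3))$ is upper semicontinuous, so the unbalanced splitting $((n+2)^{n-5},n+3,n+5)$ cuts out a \emph{closed} subset of the Case C stratum, and nothing prevents special lines of $\Sigma$ from lying in it even when the generic line of $\Sigma$ does not. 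Your argument therefore establishes the splitting only for a \emph{generic} line on a $4$-secant $\bbP^3$, whereas the theorem claims it for \emph{every} such line not contained in a $3$-secant $\bbP^2$. To close the gap one must argue pointwise, e.g.\ by computing $\rank(N^L_{n,2}(1))$ directly: show that $(Ann(f_1)\cap Ann(f_2))_{n-1}=(\alpha)_{n-1}$, i.e.\ that no nontrivial relation $a\beta_1\equiv b\beta_2\pmod{(\alpha)}$ with $a,b\in T_1$ can hold, using the hypothesis that $L$ lies in no $3$-secant $\bbP^2$.
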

\begin{proof}
If $L$ belongs to a  $4-$secant $\bbP^3$, then there exist two
points $q_1,q_2\in L$ and their corresponding binary forms $f_1,f_2$ of degree $n$ such that $Ann(f_1)=(\alpha,\beta_1)$ and
$Ann(f_2)=(\alpha,\beta_2)$ with $\alpha$ has only simple roots and
$\deg(\alpha)=4$ and $\beta_1\neq\beta_2,\;\;
\deg(\beta_1)=\deg(\beta_2)=n-2$. So $\dim <\alpha>_{n-2}=n-5$ and
$\dim(<\beta_1>_{n-2}\cap <\beta_2>_{n-2})=0$ otherwise
$\beta_1=\beta_2$ and $q_1=q_2$, but this is impossible. Therefore
$\rank(N^L_{n,2})=n-1-\dim(<\alpha,\beta_1>_{n-2}\cap<\alpha,\beta_2>_{n-2})=4$.

\end{proof}
\begin{cor}
The variety of lines $L\subset \bbP^n$ such that
$\pi_L(C_n)\subset\bbP^{n-2}$  has the splitting type of the normal
bundle $N_{\pi_{L}(C_n),\bbP^{n-2}}$:
$$
((n+2)^{n-5},(n+4)^2)
$$
has an irreducible component formed by the lines belonging to some
$4-$secant $\bbP^3$, but not contained in any $3-$secant $\bbP^2$.
\end{cor}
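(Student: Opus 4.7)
The plan is to exhibit an irreducible, locally closed subset of $Gr(\bbP^1,\bbP^n)$ which maps into the given splitting locus and whose dimension matches that of the ambient splitting variety, so it must be dense in an irreducible component.

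First, I would describe the variety $V$ of lines belonging to some $4$-secant $\bbP^3$ to $C_n$ as the image of an incidence correspondence
\[
I=\{(L,\pi)\in Gr(\bbP^1,\bbP^n)\times S:L\subset \pi\},
\]
where $S$ parameterizes $4$-secant $\bbP^3$'s to $C_n$. Since $S$ is irreducible (it is birational to the symmetric product $\mathrm{Sym}^4(C_n)\cong \bbP^4$ via the span map, non-defectivity being exactly the input from Chiantini--Ciliberto used in the first lemma of this subsection) and the fibers of $I\to S$ are Grassmannians $Gr(\bbP^1,\bbP^3)$, $I$ is irreducible, and therefore so is its image $V$. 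The preceding lemma asserts $\mathrm{codim}_{Gr(\bbP^1,\bbP^n)}V=2n-10=2(n-5)$.

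Next let $W\subset Gr(\bbP^1,\bbP^n)$ denote the locus of lines for which the normal bundle has splitting $((n+2)^{n-5},(n+4)^2)$; the preceding lemma gives $\mathrm{codim}\,W=2(n-5)$ as well. The open subset $V^{\circ}\subset V$ of lines lying in some $4$-secant $\bbP^3$ but not in any $3$-secant $\bbP^2$ is nonempty (a generic line on a generic $4$-secant $\bbP^3$ is not contained in the span of any three of the four secant points) and is irreducible of the same dimension as $V$. By the Case Rank $4$ theorem just proved, $V^{\circ}\subset W$.

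The conclusion is immediate: $V^{\circ}$ is an irreducible locally closed subset of $W$ of dimension equal to $\dim W$, hence its closure $\overline{V^{\circ}}=V$ is an irreducible component of $W$. I do not expect any hard step here; the only substantive ingredient is the irreducibility and dimension count for $V$, which is already granted by the cited lemma and the standard incidence construction, together with the normal bundle computation of the preceding theorem. The mild subtlety is the non-emptiness of $V^{\circ}$, which is just the observation that the condition of lying in a $3$-secant $\bbP^2$ cuts out a proper closed subvariety of $V$ (its codimension $2n-7$ from the Case Rank $3$ analysis is strictly larger than $2n-10$).
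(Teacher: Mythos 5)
Your argument is correct and is exactly the (implicit) argument the paper intends: the corollary is stated without proof, and the two preceding lemmas supply the matching codimensions $2n-10=2(n-5)$ for the $4$-secant locus and for the splitting locus, while the Case Rank~4 theorem gives the inclusion $V^{\circ}\subset W$; the dimension count then forces $\overline{V^{\circ}}$ to be an irreducible component. Your handling of the nonemptiness of $V^{\circ}$ via the codimension comparison $2n-7>2n-10$ is the right way to make that step precise.
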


\subsubsection{Case $\rank N^L_{n,2}=5$}
\begin{lem}
The splitting type of the normal bundle $N_{\pi_{L}(C_n),\bbP^{n-2}}$ is:
$$
((n+2)^{n-6},(n+3)^2,n+4)
$$
if and only if $\rank(N^L_{n,2})=5$.
\end{lem}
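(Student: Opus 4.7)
The plan is to deduce the lemma directly from the five-case classification of splitting types performed at the beginning of this subsection. First I would extract from the exact sequence
\begin{displaymath}
\xymatrix{
0 \ar[r] & N^{\dual}_{\pi_L(C_n);\bbP^{n-2}}(n+2) \ar[r] & \sO_{C_n}^{n-1} \ar[r]^{\mathcal{N}^L_{n,2}} & \sO^2_{C_n}(2) \ar[r] & 0
}
\end{displaymath}
the identity $\rank(\mathcal{N}^L_{n,2}) = (n-1) - h^0\bigl(N^{\dual}_{\pi_L(C_n);\bbP^{n-2}}(n+2)\bigr)$, obtained by taking global sections and noting that $h^0(\sO_{C_n}(2)^2) = 6$.

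Next I would use the decomposition $N_{\pi_L(C_n);\bbP^{n-2}} = \sO(n+2)^{n-7} \oplus \sF$ already established. The trivial summand contributes exactly $n-7$ to $h^0(N^{\dual}(n+2))$, so the rank identity reduces to
\[
\rank(\mathcal{N}^L_{n,2}) = 6 - h^0\bigl(\sF^{\dual}(n+2)\bigr).
\]
Hence $\rank(\mathcal{N}^L_{n,2}) = 5$ is equivalent to $h^0(\sF^{\dual}(n+2)) = 1$, which by case (b) of the preceding enumeration singles out $\sF = \sF_2$. Reattaching the $\sO(n+2)^{n-7}$ factor gives the splitting $((n+2)^{n-6}, (n+3)^2, n+4)$, and both implications follow simultaneously.

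Since the lemma is essentially bookkeeping on top of the preceding case analysis, I do not expect a genuine obstacle. The one place warranting care is matching the labelling $\sF_2$ in case (b) with the specific bundle whose twist $\sF^{\dual}(n+2)$ has one-dimensional space of sections; I would verify this termwise using $h^0(\sO_{\bbP^1}(a)) = \max(a+1, 0)$ applied to each summand of the candidate splitting. A side benefit of this approach is that the same template covered the rank $3$ and rank $4$ lemmas, so no new ideas are required.
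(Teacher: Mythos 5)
Your argument is correct and is exactly the bookkeeping the paper itself relies on: this lemma is stated without a separate proof, and the preceding equivalences (a)--(d) and (A)--(D) encode precisely your identity $\rank(\mathcal{N}^L_{n,2})=6-h^0\bigl(\mathcal{F}^{\vee}(n+2)\bigr)$ together with the enumeration of rank-$4$ bundles $\mathcal{F}$ with all summands of degree at least $n+2$. The one caution concerns your final termwise check: the degrees in the paper's printed list $\mathcal{F}_1,\dots,\mathcal{F}_5$ are off (each summand should be lowered by $2$, since $\deg\mathcal{F}=4n+12$), so the unique case with $h^0(\mathcal{F}^{\vee}(n+2))=1$ is $\mathcal{F}=\sO(n+2)\oplus\sO(n+3)^2\oplus\sO(n+4)$, which is what reattaches to $\sO(n+2)^{n-7}$ to give $((n+2)^{n-6},(n+3)^2,n+4)$ as you claim.
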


\begin{obs}
If $L$ belongs to a $5-$secant $\bbP^4$, then there exist two points $q_1,q_2\in L$ and their corresponding binary forms $f_1,f_2$ of degree $n$  such that $Ann(f_1)=(\alpha,\beta_1)$ and $Ann(f_2)=(\alpha,\beta_2)$ with $\alpha$ has only simple roots  and $\deg(\alpha)=5$ and $\beta_1\neq\beta_2,\;\; \deg(\beta_1)=\deg(\beta_2)=n-3$. So $\dim<\alpha>_{n-2}=n-6$, so $\rank(N^L_{n,2})=n-1-\dim(<\alpha,\beta_1>_{n-2}\cap<\alpha,\beta_2>_{n-2})\leq 5$.
\end{obs}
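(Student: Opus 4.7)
The idea is to exploit that any point of a $5$-secant $\bbP^4$ corresponds to a binary form of length at most $5$ sharing a common degree-$5$ apolar polynomial, and then to read off the rank of $N^L_{n,2}$ from the dimension of the intersection of the two apolar ideals associated to two well-chosen points of $L$.

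First I would parameterize the $5$-secant: write $L\subset\operatorname{span}(p_1,\ldots,p_5)$ with $p_i=\nu_n(t_i)$ for pairwise distinct $t_i\in\bbP^1$, and set $L_i=a_i x_0+b_i x_1$. Under the identification of the rational normal curve with the Veronese image of $\bbP(V)$, each $q\in L$ corresponds to a binary form $f_q=\sum_{i=1}^{5}c_i(q)L_i^{n}\in S_n$. Lemma~\ref{GADLEMMA} (equivalently the Apolarity Lemma~\ref{apolarity}) then yields that the degree-$5$ polynomial
$$
\alpha=\prod_{i=1}^{5}(b_i\partial_0-a_i\partial_1)\in T_5,
$$
which has five simple roots because the $t_i$ are pairwise distinct, lies in $\operatorname{Ann}(f_q)$ for \emph{every} $q\in L$.

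Next, I would pick two general points $q_1\neq q_2\in L$ and let $f_1,f_2$ be the associated binary forms. For a general $q\in L$ all coefficients $c_i(q)$ are non-zero, giving an additive decomposition of $f_q$ of length $5$; since $2\cdot 5\leq n+1$ in the range under consideration, the uniqueness of the normalized GAD forces $l(f_q)=5$. Theorem~\ref{completeintersection} then yields the complete-intersection presentation $\operatorname{Ann}(f_{q_i})=(\alpha,\beta_i)$ with $\deg\beta_i=n+2-5=n-3$. The forms $\beta_1,\beta_2$ cannot be proportional: if they were, the two apolar ideals would coincide, and from $\operatorname{Ann}(f_1)_n=\operatorname{Ann}(f_2)_n$ (the codimension-one hyperplane orthogonal to $f_i$ under the apolar pairing) one would conclude $f_1\propto f_2$, contradicting $q_1\neq q_2$ in $\bbP^n$.

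The rank estimate is now a direct dimension count: the space $\langle\alpha\rangle_{n-2}=\alpha\cdot S_{n-7}$, of dimension $n-6$, is contained in $\langle\alpha,\beta_i\rangle_{n-2}$ for $i=1,2$, so by Proposition~\ref{apolaritycorollaryN}
$$
\ker(N^L_{n,2})=\langle\alpha,\beta_1\rangle_{n-2}\cap\langle\alpha,\beta_2\rangle_{n-2}\supseteq \alpha\cdot S_{n-7},
$$
whence $\dim\ker(N^L_{n,2})\geq n-6$ and $\rank(N^L_{n,2})\leq(n-1)-(n-6)=5$. The delicate step is the guarantee that $l(f_{q_i})=5$ for the chosen $q_i$: if $L$ already lay in a $4$-secant $\bbP^3$ the generic length on $L$ would drop to $4$ and one would be back in the rank-$4$ subsection treated above, so the substantive content of the observation is the case when $L$ sits in a $5$-secant but in no smaller secant variety, which is precisely where the genericity argument for the uniqueness of the length-$5$ GAD is valid.
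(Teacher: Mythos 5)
Your argument is correct and follows the same route the paper takes (and uses throughout the neighbouring rank-$3$ and rank-$4$ cases): the product $\alpha=\prod_{i=1}^{5}(b_i\partial_0-a_i\partial_1)$ of the five linear forms dual to the secant points is apolar to every form supported on the $5$-secant $\bbP^4$, the complete-intersection structure of Theorem~\ref{completeintersection} gives $Ann(f_i)=(\alpha,\beta_i)$ with $\deg\beta_i=n-3$, and the inclusion $\alpha\cdot T_{n-7}\subseteq\ker(N^L_{n,2})$ yields $\rank(N^L_{n,2})\leq n-1-(n-6)=5$. Your added care about genericity and the non-proportionality of $\beta_1,\beta_2$ is a welcome refinement, though note that the rank bound itself needs only $\alpha\in\bigcap_i Ann(f_i)_{n-2}$ and so holds even at special points of $L$.
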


\begin{obs}
If $\rank(N^L_{n,2})=5$, then or $L\subset \bbP^4$ which is $5-$secant to $C_n$ or  $L\subset \bbP^k$ which is $(k+1)-$secant. In the second case there exist two points $q_1,q_2\in L$ such that $Ann(f_1)=(\alpha,\beta_1)$ and $Ann(f_2)=(\alpha,\beta_2)$ with $\alpha$ has only simple roots  and $\deg(\alpha)=k+1$ and $\beta_1\neq\beta_2,\;\; \deg(\beta_1)=\deg(\beta_2)=n-k+1$.
\end{obs}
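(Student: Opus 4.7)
The strategy mirrors the rank--$3$ and rank--$4$ arguments of this subsection: translate the rank condition into an intersection of degree--$(n-2)$ pieces of apolar ideals, force a common apolar generator via the Gorenstein complete intersection structure of Theorem \ref{completeintersection}, and finally recover the secant $\bbP^k$ through the Apolarity Lemma \ref{apolarity}. Pick two points $q_1,q_2\in L$ spanning $L$, and let $f_1,f_2\in S_n$ be the associated binary forms. As in the proof of Proposition \ref{apolaritycorollaryN}, the kernel of the global--section map of $\mathcal{N}^L_{n,2}:\sO_{C_n}^{n-1}\to\sO_{C_n}(2)^{2}$ is $(\mathrm{Ann}(f_1))_{n-2}\cap(\mathrm{Ann}(f_2))_{n-2}$, so $\rank(\mathcal{N}^L_{n,2})=5$ is equivalent to
$$
\dim\bigl((\mathrm{Ann}(f_1))_{n-2}\cap(\mathrm{Ann}(f_2))_{n-2}\bigr)=n-6.
$$

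Next, Theorem \ref{completeintersection} writes $\mathrm{Ann}(f_i)=(\alpha_i,\beta_i)$ with $\deg\alpha_i=s_i:=l(f_i)$ and $\deg\beta_i=n+2-s_i$. The symmetric Hilbert function of $A_{f_i}$ yields $\dim(\mathrm{Ann}(f_i))_{n-2}=n-1-H_{A_{f_i}}(2)=n-4$ as soon as $s_i\geq 3$; the cases $s_i\leq 2$ would place $q_i$ on a tangent or secant line and are excluded by the ordinary--singularity hypothesis. Hence the two codimension--$3$ subspaces of $T_{n-2}$ must meet in codimension $5$, and the core step is to deduce $\alpha_1=\alpha_2=:\alpha$ (up to scalar). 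Arguing by contradiction, if $\alpha_1,\alpha_2$ were coprime, a Koszul/Hilbert function computation for the complete intersections $(\alpha_i,\beta_i)$ shows that the common piece collapses to the degree--$(n-2)$ part of $(\alpha_1\alpha_2,\alpha_1\beta_2,\alpha_2\beta_1,\beta_1\beta_2)$, whose dimension is strictly less than $n-6$, contradicting the previous step. Set $k+1:=\deg\alpha$. Since the intersection contains $\alpha\,T_{n-k-3}$ of dimension $n-k-2$, one has $k\geq 4$, with equality in the generic configuration.

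Finally, since $\pi_L(C_n)$ has only ordinary singularities, $\alpha$ has only simple roots $P=\{p_1,\dots,p_{k+1}\}\subset\bbP^1$ (a repeated root would force $L$ to meet a tangent or osculating flat of $C_n$, which is excluded). Lemma \ref{GADLEMMA} then upgrades apolarity of $\alpha$ to each $f_i$ into additive decompositions $f_i=\sum_{j=1}^{k+1}c^{(i)}_j L_{p_j}^{n}$, and part (iv) of the Apolarity Lemma \ref{apolarity} places both $q_1,q_2$, and hence the entire line $L$, inside the $(k+1)$--secant $\bbP^k$ spanned by $\nu_n(p_1),\dots,\nu_n(p_{k+1})$. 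The degree of $\beta_i$ is then $n+2-(k+1)=n-k+1$, exactly as claimed, and $\beta_1\ne\beta_2$ because the two ideals $\mathrm{Ann}(f_1),\mathrm{Ann}(f_2)$ are distinct. The two advertised cases correspond to $k=4$ (the generic rank--$5$ configuration, giving the $5$--secant $\bbP^4$) and $k>4$ (exceptional configurations where additional syzygies among $\beta_1,\beta_2$ compensate for the smaller $\alpha\,T_{n-k-3}$). The main obstacle is precisely the coprime--case argument of the middle paragraph, together with the accounting of those exceptional $\beta$--syzygies when $k>4$; both reduce to Hilbert function bookkeeping in the Gorenstein rings $A_{f_i}$ furnished by Theorem \ref{completeintersection}.
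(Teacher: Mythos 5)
The paper states this Observation without proof, so your argument has to stand on its own; unfortunately its central step does not. The crux is your coprime-case contradiction: you assert that if $\alpha_1,\alpha_2$ are coprime then $\mathrm{Ann}(f_1)\cap \mathrm{Ann}(f_2)$ ``collapses'' to the product ideal $(\alpha_1\alpha_2,\alpha_1\beta_2,\alpha_2\beta_1,\beta_1\beta_2)$ in degree $n-2$. The identity $I_1\cap I_2=I_1\cdot I_2$ requires $I_1+I_2=(1)$, and the two apolar ideals are both $\mathfrak{m}$-primary (each $(\alpha_i,\beta_i)$ has empty zero locus by Theorem \ref{completeintersection}(iii)), hence never comaximal; coprimality of the degree-$s_i$ generators alone does not give the collapse, and in general $I_1\cap I_2\supsetneq I_1I_2$. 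So the inequality $\dim(I_1\cap I_2)_{n-2}<n-6$ is not established, and with it the existence of the common apolar form $\alpha$ --- which is precisely the content of the Observation --- is not proved. The closing sentence of your proposal concedes that this step and the ``accounting of exceptional $\beta$-syzygies'' for $k>4$ reduce to bookkeeping that is never actually performed; as written this is a plan, not a proof. Note also that when $k+1>l(f_i)$ the common apolar form of degree $k+1$ is not the minimal generator $\alpha_i$ of $\mathrm{Ann}(f_i)$, so the target of the argument should not be ``$\alpha_1=\alpha_2$'' but the existence of a common element of $(I_1\cap I_2)_{k+1}$ with simple roots.

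Two secondary gaps: (i) your justification that $\alpha$ has simple roots is not valid --- a repeated root of $\alpha$ at $p$ only places $q_i$ in the \emph{span} of the tangent line at $\nu_n(p)$ and further points of $C_n$, not on the tangent line itself, and such degenerate secant spaces are not excluded by the ordinary-singularities hypothesis; (ii) the exclusion of $s_i\le 2$ is also too quick, since a point of $L$ lying on a secant line produces a node, which \emph{is} an ordinary singularity. The first computation ($\dim(\mathrm{Ann}(f_i))_{n-2}=n-4$ for $l(f_i)\ge 3$, and the reformulation of rank $5$ as $\dim(I_1\cap I_2)_{n-2}=n-6$) is correct and consistent with the paper's rank-$3$ and rank-$4$ arguments, but the passage from that numerical condition to the secant-space conclusion is exactly what is missing.
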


\subsection{Normal Bundle of Rational Curves in $\bbP^{n-3}$}
\normalsize
\subsubsection{Case $\rank N^L_{n,3}=6,5$}

\begin{obs}
If $L\isom \bbP^2$ is contained in a $6-$secant $\bbP^5$ to $C_n$, then there exist three binary forms of degree $n$ correspond to  three points $q_1,q_2,q_3\in L$
such that $Ann(f_1)=(\alpha,\beta_1),Ann(f_2)=(\alpha,\beta_2)$ and $Ann(f_3)=(\alpha,\beta_3)$ where $\alpha$ has only simple roots,
$\deg(\alpha)=6$ and $\beta_1\neq\beta_2\neq\beta_3,\;\; \deg(\beta_1)=\deg(\beta_2)=\deg(\beta_3)=n-4$. So $\dim<\alpha>_{n-2}=n-7$, so
$\rank(N^L_{n,2})=n-1-\dim(<\alpha,\beta_1>_{n-2}\cap<\alpha,\beta_2>_{n-2})\leq 6$.

\end{obs}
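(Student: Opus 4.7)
The plan is to apply the Apolarity Lemma to produce a degree-$6$ form $\alpha$ from the secant configuration and then invoke Theorem~\ref{completeintersection} to read off the structure of $Ann(f_i)$ for three generic points of $L$. Since $\bbP^5$ is $6$-secant to $C_n$, it meets $C_n$ in six points $\nu_n(p_1),\ldots,\nu_n(p_6)$ with $p_i=[a_i:b_i]\in\bbP^1$. Setting $\alpha=\prod_{i=1}^{6}(b_i\partial_0-a_i\partial_1)\in T_6$ produces a degree-$6$ form with simple roots, and by Lemma~\ref{apolarity}(ii) one has $\alpha\in(\mathcal{I}_P)_6=Ann(f)_6$ for every binary form $f$ of degree $n$ whose projective class lies in $\bbP^5$. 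In particular $\alpha$ annihilates every $f_i$ arising from a point $q_i\in L\subset\bbP^5$.

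Next I choose three points $q_1,q_2,q_3\in L$ in sufficiently general position, with associated forms $f_1,f_2,f_3$. Generically each $f_i$ has length exactly $6$: forms of length $<6$ correspond to points lying in lower secant loci, which cut $\bbP^5$ in a proper subvariety. In the stable range $2\cdot 6\leq n+1$, Theorem~\ref{completeintersection} gives $Ann(f_i)=(\alpha_i,\beta_i)$ with $\deg\alpha_i=6$ and $\deg\beta_i=n-4$, and asserts that $Ann(f_i)_6$ is one-dimensional; hence $\alpha_i$ must be proportional to $\alpha$. Pairwise distinctness $\beta_i\neq\beta_j$ is then immediate, for an equality would force $Ann(f_i)=Ann(f_j)$ and hence $f_i$ proportional to $f_j$, contradicting $q_i\neq q_j$.

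Finally, for the rank estimate I observe that $<\alpha>_{n-2}=\alpha\cdot T_{n-8}$ has dimension $n-7$ and is contained in $<\alpha,\beta_i>_{n-2}=Ann(f_i)_{n-2}$ for each $i$, so
$$\rank(\mathcal{N}^L_{n,3})=n-1-\dim\bigl(<\alpha,\beta_1>_{n-2}\cap<\alpha,\beta_2>_{n-2}\bigr)\leq n-1-(n-7)=6.$$
The delicate point --- and the place where I expect the main obstacle --- is the genericity of the three chosen points: one must rule out the degenerate case in which every point of $L$ corresponds to a form of length below $6$, which is excluded precisely because $L$ sits in the $6$-secant $\bbP^5$ but not in any smaller secant flat; outside the stable range $n\geq 11$ the uniqueness in Theorem~\ref{completeintersection}(ii) fails, but the weaker inclusion $<\alpha>_{n-2}\subseteq Ann(f_i)_{n-2}$ still yields the rank bound.
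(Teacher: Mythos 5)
Your argument is correct and follows the same route the paper uses for this observation and its analogues (the rank~3 and rank~4 cases in codimension~2): the six points of secancy give a degree-$6$ apolar form $\alpha$ with simple roots common to all forms supported on the $\bbP^5$, Theorem~\ref{completeintersection} identifies $Ann(f_i)=(\alpha,\beta_i)$ with $\deg\beta_i=n+2-6=n-4$, and the inclusion $\langle\alpha\rangle_{n-2}\subseteq\ker N^L_{n,3}$ with $\dim\langle\alpha\rangle_{n-2}=n-7$ gives the bound $\rank\leq 6$. Your closing remark is also the right resolution of the only delicate point: the rank bound needs only the apolarity of $\alpha$ to the $f_i$, not the uniqueness or genericity statements.
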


\begin{obs}
If $L\isom \bbP^2$ is contained in a a $5-$secant $\bbP^4$, then there exist three binary forms of degree $n$ correspond to three points $q_1,q_2,q_3\in L$ such that
$Ann(f_1)=(\alpha,\beta_1),Ann(f_2)=(\alpha,\beta_2)$ and $Ann(f_3)=(\alpha,\beta_3)$ where $\alpha$ has only simple roots,
 $\deg(\alpha)=5$ and $\beta_1\neq\beta_2\neq\beta_3,\;\; \deg(\beta_1)=\deg(\beta_2)=\deg(\beta_3)=n-3$.
 So $\dim<\alpha>_{n-2}=n-6$, and $\rank(N^L_{n,2})=n-1-\dim(<\alpha,\beta_1>_{n-2}\cap<\alpha,\beta_2>_{n-2})\leq 5$. 
\end{obs}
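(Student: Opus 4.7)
The plan is to exploit the Apolarity Lemma to produce a single apolar form $\alpha \in T_5$ shared by every binary form $f_q$ with $q$ in the $5$-secant $\Pi \isom \bbP^4$, and then to invoke Theorem \ref{completeintersection} to pin down the remaining generator of $Ann(f_i)$.

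First I would set up the apolar form. Since $\Pi$ is $5$-secant to $C_n$, it meets $C_n$ transversally in five distinct points $\nu_n([\ell_i])$, with $\ell_1, \dots, \ell_5 \in S_1$ pairwise non-proportional. Set $\alpha := \prod_{i=1}^{5} \ell_i^{\perp} \in T_5$, where $\ell_i^{\perp} = b_i \partial_0 - a_i \partial_1$ when $\ell_i = a_i x_0 + b_i x_1$, as in Lemma \ref{GADLEMMA}. This $\alpha$ has degree $5$ with simple roots and annihilates every $\ell_i^n$ by Lemma \ref{apolarity}; since $\langle \ell_1^n, \dots, \ell_5^n \rangle \subset S_n$ is exactly $\Pi$, we get $\alpha \in Ann(f_q)$ for every $q \in \Pi$, in particular for every $q \in L$. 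Next, I would pick three distinct points $q_1, q_2, q_3 \in L$ where $l(f_i) = 5$; such points exist because, by Theorem \ref{rankcat}, the catalecticant rank is upper semicontinuous and equals $5$ on an open dense subset of $\Pi$ (nonempty at $f = \sum_i \ell_i^n$), and this locus meets $L$ densely provided $L$ is not contained in a smaller secant space. Applying Theorem \ref{completeintersection} (since $2 \cdot 5 \leq n + 1$) I obtain $Ann(f_i) = (\alpha_i, \beta_i)$ with $\deg \alpha_i = 5$, $\deg \beta_i = n - 3$, and $\dim (I_{f_i})_5 = 1$; uniqueness of the degree-$5$ generator forces $\alpha_i$ to be a scalar multiple of $\alpha$.

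Pairwise non-proportionality of the $\beta_i$ is immediate: if $\beta_i \propto \beta_j$ then $Ann(f_i) = Ann(f_j)$, hence $f_i \propto f_j$ and $q_i = q_j$, contradicting the choice. The count $\dim \langle \alpha \rangle_{n-2} = \dim T_{n-7} = n - 6$ is formal, and the rank estimate follows from the codimension-$3$ analog of Proposition \ref{apolaritycorollaryN}: the kernel of $\mathcal{N}^L_{n,3}$ on global sections contains $\langle \alpha \rangle_{n-2}$, yielding $\rank \leq (n-1) - (n-6) = 5$. The main obstacle is the genericity step, i.e.\ ruling out that $L$ accidentally lies in a $(k+1)$-secant $\bbP^{k}$ with $k \leq 3$, which would push $l(f_q)$ below $5$ everywhere on $L$ and destroy the uniqueness of the degree-$5$ generator $\alpha$.
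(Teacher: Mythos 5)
Your proposal follows the same route the paper (implicitly) takes for all of these secant-space observations: write $\alpha=\prod_{i=1}^{5}\ell_i^{\perp}\in T_5$ for the five points of $C_n$ spanning the $5$-secant $\bbP^4$, note via the Apolarity Lemma that $\alpha$ annihilates every form in that span, and then read off the structure of $\mathrm{Ann}(f_i)$ from Theorem \ref{completeintersection} and the dimension count $\dim\langle\alpha\rangle_{n-2}=\dim T_{n-7}=n-6$. Your version is in fact more careful than the paper's, which states the observation without isolating the genericity hypothesis.

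On the obstacle you flag at the end: it is worth separating the two halves of the claim. The inequality $\rank(N^L_{n,3})\leq 5$ needs \emph{no} genericity at all, because it only uses $\alpha\in\mathrm{Ann}(f_q)$ for every $q\in L$ (which you already proved for all of $\Pi$), hence $\langle\alpha\rangle_{n-2}\subseteq\bigcap_i\ker\mathrm{Cat}_{f_i}(2,n-2)=\ker N^L_{n,3}$ and $\rank\leq(n-1)-(n-6)=5$; the uniqueness of the degree-$5$ generator plays no role there. What genuinely requires $L$ not to sit inside a smaller secant space is the literal description $\mathrm{Ann}(f_i)=(\alpha,\beta_i)$ with $\deg\alpha=5$: if $L$ lay in a $3$- or $4$-secant subspace, $l(f_q)$ would drop and $\mathrm{Ann}(f_q)$ would acquire a generator of degree $<5$, so that description fails --- but then $L$ falls under the stronger rank-$3$ or rank-$4$ statements proved earlier in the paper, which is why the paper treats the present case as the generic one. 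One further small caveat: your appeal to $\dim(I_{f_i})_5=1$ uses $2\cdot 5\leq n+1$, i.e.\ $n\geq 9$; for the lower values of $n$ allowed in this section the degree-$5$ piece of the apolar ideal can have dimension $>1$ even when $l(f_i)=5$, although again the rank bound survives. With those clarifications your argument is correct and matches the paper's approach.
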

\subsubsection{Case $\rank N^L_{n,3}=4$}
\begin{propo}
If  $L\isom \bbP^2$  is contained in a  $4-$secant $\bbP^3$, but it
is not a $3-$secant $\bbP^2$, then the splitting type of the normal
bundle $N_{\pi_{L}(C_n),\bbP^{n-3}}$ is:
$$
((n+2)^{n-5},n+8).
$$
Such $L$'s form an irreducible component of codimension $(3n-13)$ of
the varieties of projection planes which give the splitting type
above .
\end{propo}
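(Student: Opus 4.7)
The plan is to follow the rank-$3$, codimension-$2$ approach, with the centre of projection now a plane $L=\bbP(W)\subset\bbP^n$ and the common apolar form of degree four. Fix a basis $f_1,f_2,f_3$ of the three-dimensional subspace $W\subset\sym^n V$ with $L=\bbP(W)$, and let $\alpha\in T_4$ be the (unique up to scalar) degree-four form with four distinct simple roots cutting $C_n$ in the four points of the ambient $4$-secant $\bbP^3$. By the Apolarity Lemma, $\alpha$ is apolar to every $f\in W$, and Theorem~\ref{completeintersection} then gives, for a generic (length-four) $f_i\in W$, the complete intersection $Ann(f_i)=(\alpha,\beta_i)$ with $\deg\beta_i=n-2$.

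The key computation is $\ker(\mathcal{N}^L_{n,3})=\bigcap_{i=1}^{3}Ann(f_i)_{n-2}=\langle\alpha\rangle_{n-2}$, of dimension $n-5$. The inclusion $\supseteq$ is clear. For $\subseteq$, each $Ann(f_i)_{n-2}$ has dimension $n-4$ and, modulo $\langle\alpha\rangle_{n-2}$, descends to a single line $[\beta_i]$ in the four-dimensional quotient $Q:=T_{n-2}/\langle\alpha\rangle_{n-2}$. Gorenstein Artin duality (Theorem~\ref{completeintersection}) says $Ann(f)$ determines $f$ up to scalar, so the map $[f]\mapsto[\beta_f]$ from $\bbP(W)$ to $\bbP(Q)$ is projectively injective; the hypothesis that $L$ is not a $3$-secant $\bbP^2$ excludes the degeneracies in which some $f\in W$ would have length strictly less than four. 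Hence $[\beta_{f_1}],[\beta_{f_2}],[\beta_{f_3}]$ are three pairwise distinct lines in $Q$, they meet pairwise only at the origin, and $\bigcap Ann(f_i)_{n-2}=\langle\alpha\rangle_{n-2}$. This gives $\rank\mathcal{N}^L_{n,3}=(n-1)-(n-5)=4$.

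To pass from rank to splitting, write $N_{\pi_L(C_n);\bbP^{n-3}}(-n)=\bigoplus_{i=0}^{n-5}\sO(n'_i)$ with $n'_i\ge 2$ and $\sum n'_i=2n-2$. The long exact sequence of $0\to N^\vee(n+2)\to\sO_{C_n}^{n-1}\to\sO_{C_n}^3(2)\to 0$ (together with $H^1$-vanishing on $\bbP^1$) identifies $h^0(N^\vee(n+2))$ with $\#\{i:n'_i=2\}$. Rank $4$ forces this number to equal $n-5$, leaving exactly one summand with $n'_i\ge3$; the degree sum then pins it to $n'_i=8$, yielding $N_{\pi_L(C_n);\bbP^{n-3}}\cong\sO(n+2)^{n-5}\oplus\sO(n+8)$. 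For the codimension, introduce the incidence $I=\{(L,S)\in Gr(\bbP^2,\bbP^n)\times\sym^4(C_n):L\subset S\}$, where $\sym^4(C_n)\cong\bbP^4$ parametrises $4$-secant $\bbP^3$'s; the first projection has fibre $Gr(\bbP^2,\bbP^3)\cong\bbP^3$, so $I$ is irreducible of dimension $7$. The second projection is generically injective on planes avoiding the $3$-secant locus, since two distinct $4$-secant $\bbP^3$'s through $L$ would intersect in $L$ and span a $\bbP^4$, which cuts $C_n$ in at most five points and hence forces at least three of their eight secant points to coincide, producing a $3$-secant $\bbP^2\subseteq L$. Therefore the image has dimension $7$, codimension $3(n-2)-7=3n-13$ in $Gr(\bbP^2,\bbP^n)$, and is irreducible, giving an irreducible component of the stated splitting locus.

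The main obstacle I anticipate is the equality $\bigcap Ann(f_i)_{n-2}=\langle\alpha\rangle_{n-2}$: one must carefully combine the apolarity construction of $\alpha$ with the Gorenstein Artin structure to obtain the projectively injective map $[f]\mapsto[\beta_f]$, and verify that the ``$L$ not $3$-secant'' hypothesis (together with the implicit $L\cap C_n=\emptyset$ required for $\pi_L(C_n)$ to have degree $n$) genuinely rules out the degenerate configurations that would otherwise enlarge the kernel.
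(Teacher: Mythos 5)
Your proposal is correct and follows essentially the same route as the paper: the common degree-four apolar form $\alpha$ cutting the four secant points, the complete-intersection structure $Ann(f_i)=(\alpha,\beta_i)$ giving $\bigcap_i Ann(f_i)_{n-2}=\langle\alpha\rangle_{n-2}$ and hence $\rank \mathcal{N}^L_{n,3}=4$, followed by the standard rank-to-splitting deduction and the incidence-variety count (which the paper carries out in the analogous codimension-two lemmas). Two cosmetic points: it is the projection to the $\bbP^4$ of $4$-secant spaces whose fibre is $Gr(\bbP^2,\bbP^3)$, not the projection to $Gr(\bbP^2,\bbP^n)$; and the ``not $3$-secant'' hypothesis only guarantees that a \emph{generic} $f\in W$ has length four (individual points of $L$ may still lie on $3$-secant planes), which is all the argument actually needs.
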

\begin{proof}
If $L\isom \bbP^2$, as centre of projection, belongs to a $4-$secant $\bbP^3$, then there exist three binary forms of degree $n$ correspond to three points $q_1,q_2,q_3\in L$ such that $Ann(f_1)=(\alpha,\beta_1)$, $Ann(f_2)=(\alpha,\beta_2)$ and $Ann(f_3)=(\alpha,\beta_3)$ with $\alpha$ has only simple roots  and $\deg(\alpha)=4$ and $\beta_1\neq\beta_2\neq \beta_3,\;\; \deg(\beta_1)=\deg(\beta_2)=\deg(\beta_3)=n-2$. So $\dim<\alpha>_{n-2}=n-5$, so $\rank(N^L_{n,3})=n-1-\dim(<\alpha,\beta_1>_{n-2}\cap<\alpha,\beta_2>_{n-2}\cap<\alpha,\beta_3>_{n-2})=4$, otherwise $\beta_1=\beta_2=\beta_3$, but this is impossible. 
\end{proof}

\section{Normal Bundle of Rational Curves in $\bbP^{n-k}$, for $\frac{n-1}{3}\leq k\leq n-3$}

\begin{obs}\label{lemmacodimr2}  We always have:
$$k\leq \rank
N^L_{n,k}=n-1-h^0(N^{\vee}_{\pi_L(C_n),\bbP^{n-k}}(n+2))\leq n-1.$$
\end{obs}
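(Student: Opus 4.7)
The plan is to derive both the equality and the two inequalities from the rightmost column of diagram~\eqref{diagr:normCODIMK}, which (once Observation~\ref{obs:singularities} supplies the local freeness of $N$) is the short exact sequence
\begin{equation*}
0 \to \bbC^{k}\otimes\sO_{C_n} \xrightarrow{(\mathcal{N}^L_{n,k})^t} \sym^{n-2}V\otimes\sO_{C_n}(2) \to N_{\pi_L(C_n);\bbP^{n-k}}(-n) \to 0
\end{equation*}
on $C_n\isom\bbP^1$. Dualising and twisting by $\sO(n+2)$ produces
\begin{equation*}
0 \to N^{\vee}_{\pi_L(C_n);\bbP^{n-k}}(n+2) \to \sO_{C_n}^{\,n-1} \xrightarrow{\;N^{L}_{n,k}\;} \sO_{C_n}^{\,k}(2) \to 0;
\end{equation*}
passing to global sections and using the vanishing $H^1(\sO_{C_n}^{\,n-1})=0$ identifies $\rank N^L_{n,k}$ with $n-1-h^0(N^{\vee}(n+2))$. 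This is the middle equality, and the upper bound $\rank N^L_{n,k}\leq n-1$ is the trivial $h^0\geq 0$.

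For the lower bound $\rank N^L_{n,k}\geq k$ I would show $h^0(N^{\vee}(n+2))\leq n-k-1$ by exploiting the first displayed sequence as a surjection $\sO_{\bbP^1}(2)^{n-1}\twoheadrightarrow N(-n)$. Writing the Grothendieck splitting $N=\bigoplus_{i=1}^{n-k-1}\sO(n_i)$, I claim every $n_i\geq n+2$: composing the surjection with the projection onto a summand $\sO(n_i-n)$ of $N(-n)$ yields a surjection $\sO_{\bbP^1}(2)^{n-1}\twoheadrightarrow\sO(n_i-n)$, which requires $n-1$ global sections of $\sO(n_i-n-2)$ with no common zero and so forces $n_i-n-2\geq 0$.

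Consequently $N^{\vee}(n+2)=\bigoplus\sO(n+2-n_i)$ has every summand of nonpositive degree, so $h^0(N^{\vee}(n+2))=\#\{i:n_i=n+2\}\leq n-k-1$. Feeding this into the rank formula of the first paragraph yields $\rank N^L_{n,k}\geq n-1-(n-k-1)=k$, completing the chain. The only nontrivial ingredient is the splitting-lemma argument on $\bbP^1$, but this is a standard consequence of Grothendieck's theorem, so I do not anticipate any genuine obstacle.
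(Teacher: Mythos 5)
Your proof is correct, and it follows exactly the route the paper itself takes: the Observation is stated without proof, but the dualized column of diagram~\eqref{diagr:normCODIMK} twisted to give $0 \to N^{\vee}(n+2) \to \sO_{C_n}^{n-1} \to \sO_{C_n}^{k}(2) \to 0$, together with the surjection $\sO_{C_n}(2)^{n-1} \twoheadrightarrow N(-n)$ forcing every summand of $N$ to have degree at least $n+2$, are precisely the ingredients the paper deploys in its codimension-$2$ discussion and in the Proposition immediately following the Observation. Your count $h^{0}(N^{\vee}(n+2)) = \#\{i : n_i = n+2\} \leq n-k-1$ is the right way to extract the lower bound $k \leq \rank N^{L}_{n,k}$, so there is nothing to add.
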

\begin{propo}
$N_{\pi_L(C_n);\bbP^{n-k}}\isom
\sO(n+2)^{n-1-\rank(N^L_{n,k})}\oplus\mathcal{F}$, where
$\mathcal{F}$ is a vector bundle on $\bbP^1$ of $\rank(N^L_{n,k})-k$
on $\bbP^1$ and $\deg(\mathcal{F}^{\vee}(n+2))=-2k$. We have
$\mathcal{F}\isom\bigoplus^{\rank(N^L_{n,k})-k}_{i=0}\sO(l_i)$ with
$l_i\geq n+3$.

If $2(n-k)\geq 2k$ we have two possibilities:
\begin{enumerate}
\item $N_{\pi_L(C_n);\bbP^{n-k}}\isom \sO(n+2)^{r-1}\oplus\sO(n+3)^{2(n-k-r)-2k}\oplus\mathcal{F}' $ with $\rank(\mathcal{F}')=n-r-2(n-k-r)+k$ and $\deg({\mathcal{F}'}^{\vee}(n+2))=-2k+2(2(n-k-r)-2k)$ if and only if $\rank(N^L_{n,k})=n-r$ and $2(n-k-r)\geq 2k$  for $1\leq r\leq n-k-2$;

\item  $N_{\pi_L(C_n);\bbP^{n-k}}\isom \sO(n+2)^{n-k-2}\oplus \sO(n+2+2k) $ if and only if $\rank(N^L_{n,k})=k+1$.
\end{enumerate}
However the last one is true also for $2n-2k<2k$.
\end{propo}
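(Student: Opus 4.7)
The strategy is to read the splitting from the exact sequence obtained by dualizing and twisting the rightmost column of diagram (\ref{diagr:normCODIMK}):
\[
0 \to N^{\vee}_{\pi_L(C_n);\bbP^{n-k}}(n+2) \to \sO_{\bbP^1}^{n-1} \xrightarrow{\mathcal{N}^L_{n,k}} \sO_{\bbP^1}(2)^{k} \to 0.
\]
Since $N^{\vee}(n+2)$ embeds as a subbundle of a trivial bundle on $\bbP^1$, Grothendieck's splitting theorem forces $N^{\vee}(n+2)\isom \sO^{a}\oplus\bigoplus_i\sO(-m_i)$ with $m_i\geq 1$. The integer $a$ of trivial factors equals $h^0(N^{\vee}(n+2))$, which by Observation \ref{lemmacodimr2} is exactly $n-1-\rank(N^L_{n,k})$. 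This yields the first decomposition $N\isom\sO(n+2)^{n-1-\rank(N^L_{n,k})}\oplus\mathcal{F}$ with $\rank(\mathcal{F})=\rank(N^L_{n,k})-k$ by subtraction, summands of $\mathcal{F}$ of degree $\geq n+3$ by construction, and $\deg(\mathcal{F}^{\vee}(n+2))=-2k$ by a routine check using $\deg N = n^2-nk+n-2$ (the Euler sequence for $T\bbP^{n-k}|_{\pi_L(C_n)}$).

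Case (2) is immediate: if $\rank(N^L_{n,k})=k+1$ then $\rank(\mathcal{F})=1$, and a line bundle on $\bbP^1$ is determined by its degree, forcing $\mathcal{F}\isom\sO(n+2+2k)$. This argument never uses the hypothesis $2(n-k)\geq 2k$, which explains why the statement asserts this extremal splitting also occurs when $2n-2k<2k$.

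For case (1), I would twist the displayed sequence by $\sO(1)$ to obtain
\[
0\to N^{\vee}(n+3)\to \sO(1)^{n-1}\xrightarrow{\mathcal{N}^L_{n,k}(1)} \sO(3)^{k}\to 0.
\]
Setting $\rank(N^L_{n,k})=n-r$ and writing $\mathcal{F}=\sO(n+3)^b\oplus\mathcal{F}'$ with summands of $\mathcal{F}'$ of degree $\geq n+4$, one gets $N^{\vee}(n+3)\isom \sO(1)^{r-1}\oplus \sO^b\oplus (\mathcal{F}')^{\vee}(n+3)$. Passing to cohomology in the twisted sequence and using the vanishings $h^1(\sO(1))=h^1(\sO)=h^1((\mathcal{F}')^{\vee}(n+3))=0$, an Euler characteristic count forces $h^0(N^{\vee}(n+3))=2(n-1)-4k$, whence $b=2(n-k-r)-2k$. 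This $b$ is non-negative exactly when the hypothesis $2(n-k-r)\geq 2k$ holds, and the rank and degree of the residual $\mathcal{F}'$ then follow by the same bookkeeping as in the first step.

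The main obstacle is the vanishing $h^1((\mathcal{F}')^{\vee}(n+3))=0$, equivalently the absence of summands of $\mathcal{F}$ of degree $\geq n+5$ that would be invisible to the $h^0$ count at twist $1$. Establishing the biconditional direction in case (1) requires showing that the rank of $\mathcal{N}^L_{n,k}(1)$ both controls and is controlled by the pair $(r,b)$ through the twisted sequence, so that the stated form is the only splitting compatible with $\rank(N^L_{n,k})=n-r$ in the numerical range $1\leq r\leq n-k-2$; this rules out pathological high-degree jumps coexisting with the rank condition.
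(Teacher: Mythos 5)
The paper states this proposition without proof, so there is no argument to compare yours against line by line; the intended reasoning has to be inferred from the parallel codimension-$2$ discussion earlier in the paper. Your first paragraph and your treatment of case (2) are correct and are exactly that intended argument: $N^{\vee}(n+2)$ is the kernel of $\sO^{n-1}\to\sO(2)^{k}$, hence splits with all summands of degree $\le 0$, the number of trivial summands is $h^0=n-1-\rank(N^L_{n,k})$, and the rank/degree bookkeeping plus the rank-one case of Grothendieck give (2) with no use of $2(n-k)\ge 2k$.

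For case (1), however, your route through the twice-twisted sequence has a genuine gap, which you yourself flag: the Euler-characteristic count gives the number $b$ of $\sO(n+3)$ summands as $2(n-r)-\rank\bigl(H^0(\sO(1)^{n-1})\to H^0(\sO(3)^{k})\bigr)$, and equating this with $2(n-k-r)-2k$ requires surjectivity of the twisted map on sections, i.e.\ $h^1(N^{\vee}(n+3))=0$. This does not follow from $\rank(N^L_{n,k})=n-r$: the paper's own codimension-$2$ analysis (cases C1 and C2, where $\rank(N^L_{n,2})=4$ is compatible with two distinct splittings distinguished only by $\rank(N^L_{n,2}(1))$) shows that the untwisted rank does not control the twisted one. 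In fact your conclusion, an \emph{exact} count of $\sO(n+3)$ summands, is stronger than the statement requires, since $\mathcal{F}'$ is left unconstrained and may absorb further $\sO(n+3)$'s. The claim that is actually needed is only a lower bound, and it follows by an elementary pigeonhole count that avoids the twisted sequence entirely: writing $\mathcal{F}=\bigoplus_{i=1}^{n-r-k}\sO(n+2+m_i)$ with $m_i\ge 1$ and $\sum_i m_i=2k$, the number of indices with $m_i\ge 2$ is at most $\sum_i(m_i-1)=3k-n+r$, so at least $(n-r-k)-(3k-n+r)=2(n-k-r)-2k$ summands equal $\sO(n+3)$ exactly; this quantity is nonnegative precisely under the hypothesis $2(n-k-r)\ge 2k$, and the converse direction is just the $h^0$ count from your first paragraph. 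I would replace your case (1) by this argument. (Note also that the degree $\deg({\mathcal{F}'}^{\vee}(n+2))$ recorded in the statement is inconsistent with the total $-2k$; the correct value is $-2k+\bigl(2(n-k-r)-2k\bigr)$, one of several numerical slips in this part of the paper.)
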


We can rephrase the above proposition as:
\begin{propo}
If $2(n-k)\geq 2k$ we have two possibilities:
\begin{enumerate}
\item $\pi_{L}(C_n)\in N^n_{n-k}((n+2)^{r-1},(n+3)^{2(n-k-r)-k},spt(\mathcal{F}') )$, where $spt(\mathcal{F}')$ is the splitting type of $\mathcal{F}'$ with $\rank(\mathcal{F}')=n-r-2(n-k-r)$ and $\deg({\mathcal{F}'}^{\vee}(n+2))=-2k+2(2(n-k-r)-2k)$ if and only if $L\in V({N^L_{n,k}})^{n-r}$  and $2(n-k-r)\geq 2k$  for $1\leq r\leq n-k-2$;

\item  $\pi_{L}(C_n)\in N^n_{n-k}((n+2)^{n-k-1},(n+2+k)) $ if and only if $L\in V({N^L_{n,k}})^{k+1}$.

\end{enumerate}
However the last one is true also for $2n-2k<2k$.
\end{propo}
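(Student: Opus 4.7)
The plan is to observe that this proposition is simply the previous one, recast in the stratification notation defined at the beginning of the paper. No new geometry or algebra is required; the content is a pure translation between two equivalent descriptions of the same loci.

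First I would unfold the two pieces of notation. By the definition given in the introduction, $N^n_{n-k}(n_1,\ldots,n_{n-k-1})$ is the subvariety of the Hilbert scheme $H^{m,n}$ parameterizing curves whose normal bundle has the prescribed splitting type. Dually, $V(N^L_{n,k})^{s}$ is to be read as the rank-$s$ determinantal stratum, in the Grassmannian $Gr(\bbP^{k-1},\bbP^n)$, of the matrix $N^L_{n,k}$ of quadratic forms appearing in diagram \eqref{diagr:normCODIMK}. So the membership conditions $\pi_L(C_n)\in N^n_{n-k}(\cdots)$ and $L\in V(N^L_{n,k})^{s}$ are, by definition, shorthand for a fixed splitting type of $N_{\pi_L(C_n);\bbP^{n-k}}$ and a fixed value of $\rank(N^L_{n,k})$, respectively.

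Next I would apply the previous proposition term by term. In case (1), the hypothesis $\rank(N^L_{n,k})=n-r$ together with $2(n-k-r)\geq 2k$ is exactly the condition $L\in V(N^L_{n,k})^{n-r}$ with the same inequality, and the splitting $\sO(n+2)^{r-1}\oplus\sO(n+3)^{2(n-k-r)-2k}\oplus\mathcal{F}'$ (with the rank and degree of $\mathcal{F}'$ prescribed there) is exactly $\pi_L(C_n)\in N^n_{n-k}((n+2)^{r-1},(n+3)^{2(n-k-r)-2k},\mathrm{spt}(\mathcal{F}'))$. In case (2), $\rank(N^L_{n,k})=k+1$ translates to $L\in V(N^L_{n,k})^{k+1}$, and the splitting $\sO(n+2)^{n-k-2}\oplus\sO(n+2+2k)$ translates to $\pi_L(C_n)\in N^n_{n-k}((n+2)^{n-k-1},n+2+2k)$. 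The supplementary remark that case (2) remains valid also for $2n-2k<2k$ is inherited verbatim from the previous proposition, which already recorded this independence of hypothesis.

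The main (and only) obstacle is therefore notational bookkeeping: one must check that the exponents and the rank/degree data for $\mathcal{F}'$ appearing in the rephrased statement agree with the expressions in the previous proposition. Once this matching is carried out, no further argument is needed.
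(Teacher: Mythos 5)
Your reading is exactly the paper's: the proposition is introduced with the phrase ``We can rephrase the above proposition as'' and carries no separate proof, so its entire content is the purely notational translation you describe. The only substantive point is the bookkeeping you yourself flag at the end: the printed exponents do not match the previous proposition literally (e.g.\ $(n+3)^{2(n-k-r)-k}$ versus $(n+3)^{2(n-k-r)-2k}$, and $(n+2)^{n-k-1}$ with the extra summand $(n+2+k)$ versus the rank-$(n-k-1)$ splitting $\sO(n+2)^{n-k-2}\oplus\sO(n+2+2k)$), so the term-by-term matching you propose is precisely the check that needs to be carried out, and it reveals these as typographical slips rather than new content.
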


\begin{lem}\label{GADsimilarlemmaN}
$\rank N^L_{n,k}\leq n-2$ if and only if the forms $f_i$ of degree $n$ corresponding to the points $p_i$ generating $L$ can be represented by the similar GAD, i.e. :
$$
f_i=G_{i_1}L^{n-g_1+1}_1+...+G_{i_m}L^{n-g_m+1}_m.
$$
\end{lem}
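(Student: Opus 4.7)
The plan is to translate the condition $\rank N^L_{n,k}\leq n-2$ into an apolarity statement and then invoke Lemma \ref{GADLEMMA} to read off the common GAD structure.

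First I would work with the exact sequence obtained (as in the codimension $k$ analogue of the codimension $2$ calculation earlier in the paper)
$$
0 \longrightarrow N^{\vee}_{\pi_L(C_n),\bbP^{n-k}}(n+2) \longrightarrow \sO_{C_n}^{n-1} \xrightarrow{\;\mathcal{N}^L_{n,k}\;} \sO_{C_n}^{k}(2) \longrightarrow 0
$$
and take global sections. Since $H^0(\sO_{C_n}^{n-1})=\bbC^{n-1}$ and the map on sections has matrix entries $a^i_{j-1}t^2-2a^i_j ts+a^i_{j+1}s^2$, I would identify a section $(c_1,\dots,c_{n-1})$ in the kernel with the degree $(n-2)$ apolar operator $\phi=\sum_j c_j\,\partial_0^{j-1}\partial_1^{n-1-j}\in T_{n-2}$; the three coefficient equations (in $t^2,ts,s^2$) arising from the $i$-th row are exactly the apolarity relations $\phi\circ f_i=0$ in degree $2$. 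Thus the kernel of the section map is precisely $\bigcap_{i=1}^k \Ann(f_i)_{n-2}$, and
$$
\rank N^L_{n,k}\leq n-2 \iff \bigcap_{i=1}^k \Ann(f_i)_{n-2}\neq 0.
$$

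For the ($\Rightarrow$) direction I would take a nonzero $\phi\in \bigcap_i \Ann(f_i)_{n-2}$ and write its prime factorization over $\bbC$ as $\phi=\prod_{j=1}^m (b_j\partial_0-a_j\partial_1)^{g_j}$ with $\sum g_j=n-2\leq n$. Applying Lemma \ref{GADLEMMA} to each $f_i$ separately (the lemma's hypothesis $n\geq s$ holds), each $f_i$ acquires a GAD
$$
f_i = \sum_{j=1}^m G_{i,j}\, L_j^{\,n-g_j+1}, \qquad L_j=a_j x_0+b_j x_1,\ G_{i,j}\in S_{g_j-1},
$$
with $L_j$ and $g_j$ independent of $i$: this is exactly what ``similar GAD'' means.

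For the ($\Leftarrow$) direction I would assume all $f_i$ share a GAD with common data $(L_j,g_j)$ and set $\phi=\prod_j (b_j\partial_0-a_j\partial_1)^{g_j}\in T_s$, $s=\sum g_j$; by Lemma \ref{GADLEMMA}, $\phi\in \Ann(f_i)$ for every $i$. In the relevant range $s\leq n-2$, multiplying $\phi$ by any nonzero element of $T_{n-2-s}$ produces a nonzero element of $\bigcap_i \Ann(f_i)_{n-2}$, forcing $\rank N^L_{n,k}\leq n-2$.

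The main obstacle I anticipate is the bookkeeping in the first step: matching the matrix entries $a^i_{j-1}t^2-2a^i_j ts+a^i_{j+1}s^2$ with the contraction pairing on divided powers, so that the kernel of $\mathcal{N}^L_{n,k}$ on sections is identified with the apolar forms of degree $n-2$ common to $f_1,\dots,f_k$. Once this identification is established, the equivalence with the similar-GAD condition is immediate from Lemma \ref{GADLEMMA} applied pointwise in $i$, and there is no deeper geometric difficulty.
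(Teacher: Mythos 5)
Your proposal is correct and follows essentially the same route as the paper's own proof: identify the condition $\rank N^L_{n,k}\leq n-2$ with the existence of a nonzero common apolar operator $\phi\in T_{n-2}$ for the $f_i$, then in one direction take the prime decomposition of $\phi$ and apply Lemma \ref{GADLEMMA} to each $f_i$, and in the other direction take the product of the powers of the $(L_j)^{\perp}$ and conclude it lies in $\ker N^L_{n,k}$. The only differences are that you make explicit the identification of $\ker N^L_{n,k}$ with $\bigcap_i Ann(f_i)_{n-2}$ (which the paper uses implicitly throughout) and you handle the case where the common GAD has length $s<n-2$ by multiplying up to degree $n-2$; both are harmless refinements of the same argument.
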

\begin{proof}
\begin{itemize}
\item[$\Rightarrow$] If $\rank N^L_{n,k}\leq n-2$, then there exists at least an element $\phi\in T_{n-2}$ such that for all forms $f_i$ corresponding to the points $p_i$ generating $L$ we have $\phi\circ f_i=0$. So we can consider the primary decomposition of $\phi=\prod^{m}_{i=1}(\phi_i)^{g_i}$, with $\phi_i\in T_1$ and $\sum_i g_i=n-2$, so every $f_i$ can be represented by the similar GAD, i.e. :
$$
f_i=G_{i_1}L^{n-g_1+1}_1+...+G_{i_m}L^{n-g_m+1}_m,
$$
where $(L_j)^{\perp}=\phi_j$ for all $j=1,..,m$ and $G_{i_j}\in S_{g_j-1}$ for all $i=1,...,k$ and $j=1,...,m$.
\item[$\Leftarrow$] On the other hand if every $f_i$ can be represented by the similar GAD, i.e. :
$$
f_i=G_{i_1}L^{n-g_1+1}_1+...+G_{i_m}L^{n-g_m+1}_m,
$$
then we can consider $\phi=\prod^{m}_{i=1}((L_i)^{\perp})^{g_i}$. By definition of GAD representation we have $\phi\circ f_i=0$ for all $i=1,...,k$, so $\phi\in \ker N^L_{n,k}$ and $\rank N^L_{n,k}\leq n-2$.
\end{itemize}
\end{proof}

\begin{obs}
In particular we can observe that if $L$ belong to a $(n-2)-$secant $\bbP^{n-3}$ generated by $q_1,...,q_{n-2}$, then there exists an element $\phi\in H^0(\sO_{\pi_L(C_n)}^{n-1})\isom S^{n-2}V^{\vee}=T_{n-2}$ such that $\phi\in \ker(N^L_{n,k})=\bigcap_i\ker(Cat_{f_i}(2,n-2))$, in fact we can take $\phi= \prod^{n-2}_{i=1} L^{\perp}_{q_i}$.

We can compute the codimension of the variety of every $\bbP^{k-1}$
which belongs to some $(n-2)-$secant $\bbP^{n-3}$  by constructing an
incidence variety:
$$
I_S=\{(L,\pi):L\in Gr(\bbP^{k-1},\bbP^n),\pi\in S, L\subset S\},
$$
where $S$ is the set of all $(n-2)$-secant $\bbP^{n-3}$ to $C_n$. In
the usual way we can compute the codimension of the image of this
incidence variety in $Gr(\bbP^{k-1},\bbP^n)$. We will indicate with
$\phi_1$ and $\phi_2$ the natural projections:
$$
\xymatrix{
& I_S \ar[dl]_{\phi_1} \ar[dr]^{\phi_2}&\\
Gr(\bbP^{k-1},\bbP^n) & & S,
}
$$
so the codimension in $Gr(\bbP^{k-1},\bbP^n)$ of $\phi_1(I_S)$ is equal to $\dim Gr(\bbP^{k-1},\bbP^n)-\dim S-\dim \phi^{-1}_2(S)=k(n+1-k)-n+2-k(n-2-k)$. The above calculation is effective thanks to the result of Chiantini and Ciliberto on the non-defectivity of the Grassmannians of secant varieties of curves (see \cite{chiantiniciliberto}). We have that this variety has codimension  $3k-n+2$ which is the codimension expected as determinantal variety.
\end{obs}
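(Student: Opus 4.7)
The observation has two parts: (a) the existence of a specific $\phi\in T_{n-2}$ in $\ker(N^L_{n,k})$ when $L$ lies in an $(n-2)$-secant $\bbP^{n-3}$, and (b) the codimension formula $3k-n+2$ for the variety of such $\bbP^{k-1}$'s.

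For (a), the plan is to invoke the Apolarity Lemma (Lemma~\ref{apolarity}) directly. The hypothesis that $L$ lies in the span of $q_1,\ldots,q_{n-2}\in C_n$ means every point $p\in L$ corresponds to a binary form $f_p\in S_n$ of the shape $f_p=\sum_{i=1}^{n-2} c_i L_{q_i}^{[n]}$. Taking $\phi=\prod_{i=1}^{n-2}L_{q_i}^{\perp}\in T_{n-2}$, part (i) of the Apolarity Lemma gives
\[
\phi\circ f_p \;=\; \sum_{i=1}^{n-2} c_i\,\phi([p_i])\,L_{q_i}^{[n-(n-2)]} \;=\; 0,
\]
since $\phi$ vanishes at each $[p_i]$ by construction. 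Choosing $p$ to range over a basis of $L$, this shows $\phi\in\bigcap_i\ker\Cat_{f_i}(2,n-2)$, which is exactly $\ker(N^L_{n,k})$ by the description of the transposed catalecticant map $(\mathcal{N}^L_{n,k})^t$ given in diagram~(\ref{diagr:normCODIMK}).

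For (b), I would follow the incidence variety strategy already sketched: set up $I_S\subset Gr(\bbP^{k-1},\bbP^n)\times S$ with its two projections $\phi_1,\phi_2$, where $S$ parameterizes $(n-2)$-secant $\bbP^{n-3}$'s to $C_n$. Since $C_n\cong\bbP^1$, unordered $(n-2)$-tuples give $\dim S=n-2$, and the fiber $\phi_2^{-1}(\pi)=Gr(\bbP^{k-1},\bbP^{n-3})$ has dimension $k(n-2-k)$, so $\dim I_S=(n-2)+k(n-2-k)$. Subtracting from $\dim Gr(\bbP^{k-1},\bbP^n)=k(n+1-k)$ and simplifying gives the expected codimension $3k-n+2$.

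The hard part will be justifying that $\phi_1$ is generically finite onto its image, so that the naive difference of dimensions indeed yields the true codimension of $\phi_1(I_S)$. Equivalently, a generic $\bbP^{k-1}$ in $\phi_1(I_S)$ should lie in only finitely many $(n-2)$-secant $\bbP^{n-3}$'s to $C_n$. This is precisely the content of the non-defectivity of Grassmann secant varieties of rational normal curves, supplied by the Chiantini--Ciliberto theorem (\cite{chiantiniciliberto}). Invoking it closes both the dimension count and the comparison with the expected determinantal codimension, coming from the condition that the $(n-1)\times k$ matrix $(\mathcal{N}^L_{n,k})^t$ (whose entries depend linearly on the Pl\"ucker coordinates of $L$) drops rank below its generic value.
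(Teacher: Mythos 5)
Your proposal is correct and follows essentially the same route as the paper: the paper's own justification is exactly the choice $\phi=\prod_{i=1}^{n-2}L_{q_i}^{\perp}$ (which annihilates every form in the span of the $L_{q_i}^{[n]}$, i.e.\ part (i) of the Apolarity Lemma, which you simply make explicit), followed by the same incidence-variety dimension count $k(n+1-k)-(n-2)-k(n-2-k)=3k-n+2$ with generic finiteness of $\phi_1$ supplied by Chiantini--Ciliberto. Your added remark on why the Chiantini--Ciliberto non-defectivity result is the step that makes the naive dimension count ``effective'' is a faithful reading of what the paper leaves implicit.
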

In general we can prove:
\begin{lem}
If the centre of projection $L\isom\bbP^{k-1}$ belongs to some $(n-1-r)-$secant $\bbP^{n-r-2}$ to the rational normal curve $C_n$ in $\bbP^n$, then we have $\rank N^L_{n,k}\leq n-r$ for $1\leq r< n-k-1$.
\end{lem}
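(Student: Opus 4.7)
The plan is to exhibit an $r$-dimensional subspace of $\ker N^L_{n,k}\subset T_{n-2}$ via the Apolarity Lemma, thereby forcing $\rank N^L_{n,k}\le (n-1)-r\le n-r$ and generalising the $r=1$ case already recorded in the observation immediately preceding the statement.

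First, I would fix $n-1-r$ distinct points $p_1,\ldots,p_{n-1-r}\in C_n$ whose span is the given $(n-1-r)$-secant $\bbP^{n-r-2}$, and let $L_1,\ldots,L_{n-1-r}\in S_1$ be the corresponding linear forms, so that under the Veronese identification $C_n=\nu_n(\bbP^1)\subset\bbP(S_n)$ each $p_j$ is the class of $L_j^n$. Since $L\subset\langle p_1,\ldots,p_{n-1-r}\rangle$, every $q\in L$ is a linear combination of the $p_j$, and therefore the associated binary form admits a classical additive decomposition $f_q=\sum_{j=1}^{n-1-r}c_j(q)\,L_j^n$ of length $n-1-r$. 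Set $\phi=\prod_{j=1}^{n-1-r}L_j^{\perp}\in T_{n-1-r}$. By Lemma \ref{GADLEMMA} (equivalently the Apolarity Lemma \ref{apolarity}), $\phi\circ f_q=0$ for every $q\in L$. Multiplying by an arbitrary $\psi\in T_{r-1}$ yields $(\psi\phi)\circ f_q=\psi\circ(\phi\circ f_q)=0$, so $\psi\phi\in\mathrm{Ann}(f_q)_{n-2}$ for every $q\in L$.

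As in the observation that immediately precedes the lemma, $\ker N^L_{n,k}=\bigcap_i\ker\mathrm{Cat}_{f_i}(2,n-2)=\bigcap_i\mathrm{Ann}(f_i)_{n-2}$, where $f_1,\ldots,f_k$ are the binary forms associated to a set of generators of $L$. The previous step therefore gives $\phi\cdot T_{r-1}\subset\ker N^L_{n,k}$. Because $T$ is an integral domain, multiplication by $\phi$ is injective on $T_{r-1}$, so $\phi\cdot T_{r-1}$ has dimension $\dim T_{r-1}=r$ as a subspace of $T_{n-2}$. Since $\dim T_{n-2}=n-1$, we conclude $\rank N^L_{n,k}\le n-1-r\le n-r$.

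The only delicate ingredient is the identification of $\ker N^L_{n,k}$ with an intersection of degree $(n-2)$ pieces of apolar ideals; once this description is granted from the observation above, everything else is a purely multiplicative manipulation inside $T$. The range $1\le r<n-k-1$ is exactly what guarantees that the ambient secant $\bbP^{n-r-2}$ is strictly larger than $L\isom\bbP^{k-1}$ and that the apolar element $\phi$ has positive degree $n-1-r\ge k+1$, so the construction is non-trivial.
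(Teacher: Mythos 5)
Your proposal is correct and is essentially the paper's own argument: the form $\phi=\prod_{j}L_j^{\perp}\in T_{n-1-r}$ you construct is exactly the common degree-$(n-r-1)$ generator $\alpha$ of the apolar ideals $Ann(f_i)=(\alpha,\beta_i)$ used in the paper, and your count $\dim(\phi\cdot T_{r-1})=r$ is the paper's $\dim(\alpha)_{n-2}=r$, giving the same bound $\rank N^L_{n,k}\le n-1-r$. The only difference is presentational — you justify $\phi\circ f_q=0$ directly from the additive decomposition and the Apolarity Lemma rather than invoking the structure of $Ann(f_i)$ as a complete intersection — which makes the step slightly more self-contained but does not change the approach.
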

\begin{proof}
If $L\isom\bbP^{k-1}$ belongs to some $(n-r-1)-$secant
$\bbP^{n-r-2}$, then there exist $k$ points $p_1,...,p_k\in L$ which
generate $L$ and the corresponding binary forms $f_i$ are generated
by two forms $Ann(f_i)=(\alpha,\beta_i)$ where $\deg(\alpha)=n-r-1$,
$\alpha$ has only simple roots and $\deg(\beta_i)=r+3$ without
common zeros with $\alpha$.  We have $\dim(\alpha)_{n-2}=r$  and
$\dim \bigcap_i(\alpha,\beta_i)_{n-2}\geq r$, so $\rank
N^L_{n,k}=n-1-\dim \bigcap_i(\alpha,\beta_i)_{n-2}\leq n-r-1$.
\end{proof}
\begin{lem}
The codimension in $Gr(\bbP^{k-1},\bbP^n)$ of the variety of all $L\isom \bbP^{k-1}$ in $\bbP^n$ belonging to some $(n-r-1)$-secant $\bbP^{n-r-2}$ to the rational normal curve in $\bbP^n$  is $2k+kr-n+r+1$.
\end{lem}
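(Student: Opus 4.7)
The plan is to mimic the incidence-variety calculation already carried out in the excerpt (the remark preceding the lemma, for the case $r=1$), and to generalize it to arbitrary $r$ with $1\leq r < n-k-1$.

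Concretely, I would first introduce the incidence variety
\[
I_S \;=\; \{(L,\pi)\,:\, L\in Gr(\bbP^{k-1},\bbP^n),\ \pi\in S,\ L\subset \pi\},
\]
where $S$ denotes the family of $(n-r-1)$-secant $\bbP^{n-r-2}$'s to the rational normal curve $C_n\subset \bbP^n$, equipped with the two natural projections $\phi_1:I_S\to Gr(\bbP^{k-1},\bbP^n)$ and $\phi_2:I_S\to S$. The idea is then to use the standard fibre-dimension computation along $\phi_2$ and bound the codimension of $\phi_1(I_S)$ from below (and above) by this expected value.

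Next I would compute each of the three ingredients. Since any $n-r-1\leq n+1$ distinct points of $C_n$ are linearly independent, the family $S$ is parametrized (birationally) by $\mathrm{Sym}^{n-r-1}\bbP^1$, giving $\dim S=n-r-1$. For a fixed $\pi\in S$ the fibre $\phi_2^{-1}(\pi)$ is the Grassmannian $Gr(\bbP^{k-1},\bbP^{n-r-2})$, of dimension $k(n-r-1-k)$. Hence
\[
\dim I_S \;=\; (n-r-1)\,+\,k(n-r-1-k),
\]
and subtracting from $\dim Gr(\bbP^{k-1},\bbP^n)=k(n+1-k)$ yields
\[
k(n+1-k)-k(n-r-1-k)-(n-r-1)\;=\;kr+2k-n+r+1,
\]
which is exactly the claimed codimension.

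The main subtlety — and the step I expect to be the real obstacle — is showing that $\phi_1$ is generically finite onto its image, so that $\dim \phi_1(I_S)=\dim I_S$ and no fibre-dimension drop spuriously increases the codimension. Equivalently, one has to check that the image indeed has the expected dimension as a determinantal-type locus. Just as in the $r=1$ case treated in the preceding remark, this is exactly the content of the Chiantini--Ciliberto non-defectivity theorem for secant varieties of curves (\cite{chiantiniciliberto}): the $(n-r-1)$-secant variety of $C_n$ (viewed through its Grassmannian of secant $\bbP^{k-1}$'s) is of expected dimension, so $\phi_1(I_S)$ has codimension equal to its expected determinantal codimension $2k+kr-n+r+1$. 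Once this non-defectivity input is invoked, the lemma follows from the dimension count above.
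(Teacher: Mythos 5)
Your proposal is correct and follows exactly the paper's own argument: the paper's proof is the same incidence-variety construction with the dimension count left to ``the usual way'' and the non-defectivity of Grassmannians of secant varieties of curves cited from Chiantini--Ciliberto. You have simply made explicit the computation ($\dim S=n-r-1$, fibre $Gr(\bbP^{k-1},\bbP^{n-r-2})$ of dimension $k(n-r-1-k)$) that the paper omits, and your arithmetic checks out against the stated codimension $2k+kr-n+r+1$.
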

\begin{proof}
Infact we can consider the incidence variety $I_S=\{(L,\pi):L\in Gr(\bbP^{k-1},\bbP^n),\pi\in S, L\subset S\}$ where $S$ is the set of all $(n-r-1)$-secant $\bbP^{n-r-2}$ to the rational normal curve in $\bbP^n$. In the usual way we can compute the codimension of the image of this incidence variety in $Gr(\bbP^{k-1},\bbP^n)$. The above calculation is effective thanks to the result of Chiantini and Ciliberto on the non-defectivity of the Grassmannians of secant varieties of curves (see \cite{chiantiniciliberto}).
\end{proof}
In particular for $r=n-k-1$ we have:
\begin{teo}\label{mainresult}
If the centre of projection $L\isom\bbP^{k-1}$ lies in to some $(k+2)-$secant  $\bbP^{k+1}$ to the rational normal curve $C_n$ in $\bbP^n$, then we have:
$$
N_{\pi_L(C_n);\bbP^{n-k}}\isom \sO(n+2)^{n-k-2}\oplus \sO(n+1+2k) .
$$
\end{teo}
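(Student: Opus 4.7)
The strategy is to reduce the theorem to showing $\rank N^L_{n,k} = k+1$: by the proposition immediately preceding, this rank condition is equivalent to a splitting of the form $\sO(n+2)^{n-k-2} \oplus \sO(d)$, and the remaining line bundle degree $d$ is forced by the total-degree relation $\deg N_{\pi_L(C_n);\bbP^{n-k}} = n(n-k+1)-2$ on the normal bundle.

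For the upper bound $\rank N^L_{n,k} \leq k+1$, I apply the preceding lemma with $r = n-k-2$: the $(k+2)$-secant $\bbP^{k+1}$ containing $L$ supplies in particular an $(n-r-1)$-secant $\bbP^{n-r-2}$ through $L$ (namely, a $(k+1)$-secant $\bbP^k$), yielding $\rank N^L_{n,k} \leq n-r-1 = k+1$. Concretely, I choose $k$ generators $p_1, \ldots, p_k$ of $L$ with associated binary forms $f_1, \ldots, f_k$ of degree $n$; Theorem \ref{completeintersection} produces a common apolar generator $\alpha$ of degree $k+1$, with simple roots at the secant points, so that $Ann(f_i) = (\alpha, \beta_i)$. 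The contribution $(\alpha)_{n-2} = \alpha \cdot T_{n-k-3}$, of dimension $n-k-2$, sits inside $\bigcap_i Ann(f_i)_{n-2}$ and forces the claimed bound.

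The matching lower bound $\rank N^L_{n,k} \geq k+1$ comes essentially for free. Observation \ref{lemmacodimr2} gives $\rank \geq k$; the extremal case $\rank = k$ would mean $h^0(N^\vee(n+2)) = n-k-1$, hence $N_{\pi_L(C_n);\bbP^{n-k}} \isom \sO(n+2)^{n-k-1}$, whose total degree $(n-k-1)(n+2)$ differs from $n(n-k+1)-2$ by exactly $2k$, impossible for $k \geq 1$. Combined with the upper bound this yields $\rank N^L_{n,k} = k+1$, and the preceding proposition delivers the stated splitting.

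The main obstacle is verifying the tightness of the upper bound, that is, the equality $(\alpha)_{n-2} = \bigcap_i Ann(f_i)_{n-2}$. Via the Apolarity Lemma \ref{apolarity}, $\phi \in T_{n-2}$ acts on $f_i = \sum_j c_{ij} L_{q_j}^{[n]}$ by $\phi \circ f_i = \sum_j c_{ij} \phi(q_j) L_{q_j}^{[2]}$, so $\phi \circ f_i = 0$ for all $i$ becomes a linear system on the tuple $(\phi(q_j))_j \in \bbC^{k+1}$. Linear independence of $f_1, \ldots, f_k$ -- forced by $L$ being a genuine $(k-1)$-plane -- rules out nontrivial solutions and forces $\phi(q_j) = 0$ for every $j$, whence $\phi \in (\alpha)_{n-2}$ and the dimension count closes.
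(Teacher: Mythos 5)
Your overall strategy --- pin down $\rank N^L_{n,k}=k+1$ and feed it into the rank-to-splitting proposition, with the lower bound $\rank N^L_{n,k}\geq k+1$ coming from the degree of the normal bundle --- is the same one the paper follows (its ``proof'' is just the specialization of the preceding secant lemma), and your lower-bound argument is fine. The gap is in the upper bound. You assert that a $(k+2)$-secant $\bbP^{k+1}$ containing $L$ ``supplies a $(k+1)$-secant $\bbP^k$ through $L$''; this is false: a generic $\bbP^{k-1}$ inside the span of $k+2$ points of $C_n$ is not contained in the span of any $k+1$ of them. Consequently the common apolar form $\alpha$ furnished by the actual hypothesis has degree $k+2$, not $k+1$ (the $f_i$ are sums of the $k+2$ powers $L_{q_j}^{[n]}$ and $\alpha=\prod_{j=1}^{k+2}L_{q_j}^{\perp}$), so $\dim(\alpha)_{n-2}=n-k-3$ and the inclusion $(\alpha)_{n-2}\subseteq\ker N^L_{n,k}$ only yields $\rank N^L_{n,k}\leq k+2$. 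Worse, if your final claim that $\phi\circ f_i=0$ for all $i$ forces $\phi(q_j)=0$ for all $j$ held in this setting, it would pin $\rank N^L_{n,k}$ at exactly $k+2$, contradicting the conclusion you are aiming for. (The hypothesis as printed appears to be a typo: the codimension $2$ and $3$ cases worked out earlier --- a line in a $3$-secant $\bbP^2$, a plane in a $4$-secant $\bbP^3$ --- show that the splitting $\sO(n+2)^{n-k-2}\oplus\sO(n+2+2k)$ comes from $L$ lying in a $(k+1)$-secant $\bbP^k$, and your computation with $\deg\alpha=k+1$ is really a proof of that corrected statement, not of the one quoted.)

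Two further points. First, the closing linear-algebra step is under-justified even in the $(k+1)$-secant case: the forms $L_{q_j}^{[2]}$ live in the $3$-dimensional space $S_2$, so for more than three secant points they are linearly dependent, and the vanishing of $\sum_j c_{ij}\phi(q_j)L_{q_j}^{[2]}$ does not termwise force $\phi(q_j)=0$. You need to argue through the $(k-2)$-dimensional relation space of the $L_{q_j}^{[2]}$ and use that $L$ is not contained in any $k$-secant $\bbP^{k-1}$ in order to exclude vectors $(\phi(q_j))_j$ supported on a proper subset of the secant points; ``linear independence of $f_1,\dots,f_k$'' alone does not close the count. Second, your own degree bookkeeping should have flagged the statement: $\sO(n+2)^{n-k-2}\oplus\sO(n+1+2k)$ has total degree $n(n-k+1)-3$, one less than $\deg N_{\pi_L(C_n);\bbP^{n-k}}=n(n-k+1)-2$, so the summand forced by $\rank N^L_{n,k}=k+1$ is $\sO(n+2+2k)$, as in the proposition you invoke.
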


\begin{obs}\label{lemmacodimr}
$k+1\leq \rank N^L_{n,k}=n-1-h^0(N^{\vee}_{\pi_{L}(C_n),\bbP^{n-k}}(n+2))\leq 3k$
\end{obs}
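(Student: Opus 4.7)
The plan is to extract all three assertions from a single short exact sequence of vector bundles on $C_n \cong \bbP^1$, namely the arbitrary-$k$ analogue of the codimension-$2$ sequence already displayed in the paper:
\begin{equation*}
0 \to N^{\vee}_{\pi_L(C_n);\bbP^{n-k}}(n+2) \to \sO_{C_n}^{n-1} \xrightarrow{\mathcal{N}^L_{n,k}} \sO_{C_n}^{k}(2) \to 0.
\end{equation*}
I would obtain this by dualizing the right-hand column of diagram \eqref{diagr:normCODIMK}, which (thanks to Observation \ref{obs:singularities}) is the short exact sequence
$0 \to \bbC^k \otimes \sO_{C_n} \to \sO_{C_n}(2)^{n-1} \to N_{\pi_L(C_n)}(-n) \to 0$, and then twisting by $\sO(2)$.

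Passing to global sections yields
\begin{equation*}
0 \to H^0\bigl(N^{\vee}(n+2)\bigr) \to \bbC^{n-1} \xrightarrow{\mathcal{N}^L_{n,k}} \bbC^{3k}.
\end{equation*}
The middle equality $\rank N^L_{n,k} = (n-1) - h^0\bigl(N^{\vee}(n+2)\bigr)$ is now tautological, and the upper bound $\rank N^L_{n,k} \leq 3k$ follows because the image sits in a $3k$-dimensional space.

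The lower bound $\rank N^L_{n,k} \geq k+1$ amounts to $h^0\bigl(N^{\vee}(n+2)\bigr) \leq n-k-2$. I would twist the displayed sequence by $\sO(-1)$:
\begin{equation*}
0 \to N^{\vee}(n+1) \to \sO_{C_n}(-1)^{n-1} \to \sO_{C_n}^{k}(1) \to 0.
\end{equation*}
Since $H^0(\sO_{\bbP^1}(-1))=0$, injection into the middle term forces $h^0\bigl(N^{\vee}(n+1)\bigr)=0$. Writing $N^{\vee}(n+2) \cong \bigoplus_{i=1}^{n-k-1} \sO(b_i)$, the vanishing $h^0\bigl(\bigoplus_i \sO(b_i - 1)\bigr)=0$ forces $b_i \leq 0$ for every $i$. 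A routine rank-and-degree check (already implicit in the Proposition preceding the observation) gives $\sum_i b_i = \deg N^{\vee}(n+2) = -2k$, so at least one $b_i \leq -1$, and consequently
\begin{equation*}
h^0\bigl(N^{\vee}(n+2)\bigr) = \#\{\,i : b_i = 0\,\} \leq (n-k-1) - 1 = n-k-2.
\end{equation*}

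The only non-routine step is verifying the exact sequence for arbitrary $k$, but this is a direct dualization that copies the codimension-$2$ construction already spelled out in the text, so I do not expect any genuine obstacle; the remainder of the argument is entirely formal.
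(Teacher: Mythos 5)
Your proof is correct and is essentially the argument the paper intends: the paper states this Observation without proof, but your short exact sequence is exactly the $k$-fold generalization of the codimension-$2$ sequence displayed in Section 2.2, the identification $\rank N^L_{n,k}=n-1-h^0(N^{\vee}(n+2))$ and the bound $\leq 3k$ are read off from global sections just as the paper does for $k=2$, and the lower bound via $b_i\leq 0$ together with $\deg N^{\vee}(n+2)=-2k$ is the same degree count that underlies the Proposition immediately following the Observation. No gaps.
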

\begin{propo}
$N_{\pi_{L}(C_n);\bbP^{n-k}}\isom \sO(n+2)^{n-1-\rank(N^L_{n,k})}\oplus\mathcal{F}$, with $\mathcal{F}$ a vector bundle of rank $\rank(N^L_{n,k})-k$ on $\bbP^1$ and $\deg(\mathcal{F}^{\vee}(n+2))=-2k$ such that $\mathcal{F}\isom \bigoplus^{\rank(N^L_{n,k})-k}_i \sO(l_i)$ with $l_i\geq n+3$.

In this case we have three possibilities:
\begin{enumerate}
\item $\mathcal{F}\isom \sO(n+3)^{2k} $ if and only if $\rank(N^L_{n,2})=3k$;
\item  $\mathcal{F}\isom \sO(n+3)^{2k-2}\oplus\sO(n+4) $ if and only if $\rank(N^L_{n,k})=3k-1$;
\item  $\mathcal{F}\isom \sO(n+3)^{2k-2r}\oplus\mathcal{F}' $  with $\rank(\mathcal{F}')=r$ and $\deg({\mathcal{F}'}^{\vee}(n+2))=-2k$ if and only if $\rank(N^L_{n,k})=3k-r$ with $1<
r\leq 2k-1$;

\item  $\mathcal{F}\isom \sO(n+2+2k) $ if and only if $\rank(N^L_{n,k})=k+1$.
\end{enumerate}
\end{propo}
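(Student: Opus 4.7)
The strategy is to dualize the right-hand vertical column of diagram (\ref{diagr:normCODIMK}), twist by $\sO_{\bbP^1}(2)$, and reduce the splitting classification to a partition identity on $\bbP^1$.

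First, by exactly the same procedure as in the codimension-$2$ discussion above, I would obtain the exact sequence
\begin{equation*}
0 \to N^{\vee}_{\pi_L(C_n);\bbP^{n-k}}(n+2) \to \sO_{C_n}^{n-1} \xrightarrow{\mathcal{N}^L_{n,k}} \sO_{C_n}(2)^{k} \to 0,
\end{equation*}
which is exact on the right thanks to the ordinary singularities hypothesis (Observation \ref{obs:singularities}). Since $N^{\vee}(n+2)$ embeds in a trivial rank $n-1$ bundle, every summand of the Grothendieck splitting $N = \bigoplus \sO(n_i)$ satisfies $n_i \geq n+2$, so I write $N = \sO(n+2)^a \oplus \sF$ with $\sF = \bigoplus \sO(l_i)$ and all $l_i \geq n+3$; in particular $h^0(\sF^{\vee}(n+2)) = 0$.

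Next, taking global sections of the sequence and using $H^1(\sO_{C_n}^{n-1}) = 0$, I deduce $h^0(N^{\vee}(n+2)) = n - 1 - \rho$, where $\rho := \rank N^L_{n,k}$ is the rank of the induced linear map $H^0(\mathcal{N}^L_{n,k}) \colon \bbC^{n-1} \to \bbC^{3k}$. Comparing with $h^0(N^{\vee}(n+2)) = a + h^0(\sF^{\vee}(n+2)) = a$ yields $a = n - 1 - \rho$, hence $\rank \sF = \rho - k$. A direct degree computation on the exact sequence gives $\deg N^{\vee}(n+2) = -2k$, so $\deg \sF^{\vee}(n+2) = -2k$, equivalently
\begin{equation*}
\sum_{i=1}^{\rho-k}\bigl(l_i - (n+3)\bigr) = 3k - \rho.
\end{equation*}

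The four cases of the proposition now follow by inspecting this partition identity under the constraint $l_i - (n+3) \geq 0$. When $\rho = 3k$ the total excess vanishes, so every $l_i = n+3$ and $\sF = \sO(n+3)^{2k}$. When $\rho = 3k - 1$ the single unit of excess is concentrated on exactly one part, yielding $\sF = \sO(n+3)^{2k-2} \oplus \sO(n+4)$. For $\rho = 3k - r$ with $1 < r \leq 2k - 1$, at most $r$ of the $2k - r$ parts can carry strictly positive excess, so at least $2k - 2r$ parts equal $n+3$ and $\sF$ splits as $\sO(n+3)^{2k-2r} \oplus \sF'$ with $\rank \sF' = r$ absorbing the remaining degree. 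In the extremal case $\rho = k + 1$ the lone summand carries the full excess $2k - 1$, giving $\sF = \sO(n+2+2k)$. The reverse implication in each case is automatic, since the splitting of $\sF$ determines $h^0(\sF^{\vee}(n+2)) = 0$, and hence $a = n - 1 - \rho$. The proof is essentially organizational: the main obstacle is setting up the dualization correctly and tracking the sub-bundle constraint $l_i \geq n+3$ — both of which follow from the ordinary singularities assumption already invoked in the codimension-$2$ analysis above.
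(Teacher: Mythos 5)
Your proof is correct and takes essentially the same route the paper uses (implicitly here, and explicitly in its codimension-$2$ subsection): dualize the last column of diagram (\ref{diagr:normCODIMK}), twist so that $N^{\vee}_{\pi_L(C_n);\bbP^{n-k}}(n+2)$ sits inside $\sO_{C_n}^{n-1}$, read off the multiplicity of $\sO(n+2)$ from $h^0=n-1-\rank N^L_{n,k}$ and the total degree $-2k$ from the sequence, and then enumerate the resulting partitions. One remark: your own bookkeeping in case (3) yields $\deg({\mathcal{F}'}^{\vee}(n+2))=-2r$ rather than the $-2k$ appearing in the statement, which therefore looks like a typo in the proposition rather than a gap in your argument.
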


\section{Normal Bundle of Rational Curves in $\bbP^{n-k}$, for $k<\frac{n-1}{3}$}

\begin{obs}
If $L\isom \bbP^{k-1}$, as centre of projection, belongs to a $(k+1)-$secant $\bbP^k$, then there exist  $k$ binary forms of degree $n$ correspond to points $q_1,..,q_k\in L$ such that $Ann(f_1)=(\alpha,\beta_1),...,Ann(f_k)=(\alpha,\beta_r)$ with $\alpha$ has only simple roots  and $\deg(\alpha)=k+1$ and $\beta_1\neq...\neq\beta_k,\;\; \deg(\beta_1)=...=\deg(\beta_k)=n-k+1$, where $f_i$ is the binary form correspond to the point $q_i$. So $\dim<\alpha>_{n-2}=n-k-2$ and $\dim<\beta_i>_{n-2}=k$, therefore $n-k-1\leq \rank(N^L_{n,k})=n-1-\dim(<\alpha,\beta_1>_{n-2}\cap...\cap<\alpha,\beta_k>_{n-2})\leq k+1$.
\end{obs}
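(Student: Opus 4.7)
The plan is to use the structure of the $(k+1)$-secant $\bbP^k$ together with the complete-intersection description of apolar ideals (Theorem \ref{completeintersection}) to read off $\operatorname{Ann}(f_i)$, and then to compute the kernel dimension of $N^L_{n,k}$ directly.

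First, I realise the $(k+1)$-secant $\bbP^k$ as the projective span of $\nu_n([L_1]),\dots,\nu_n([L_{k+1}])$ for distinct $L_j\in V$, and set $\alpha:=\prod_{j=1}^{k+1}(L_j)^\perp\in T_{k+1}$, which has only simple roots by construction. By Lemma \ref{GADLEMMA}, $\alpha$ is apolar to every $f$ in the span $\langle L_1^n,\dots,L_{k+1}^n\rangle\subset S_n$, i.e.\ to every binary form representing a point of this secant $\bbP^k$. I then choose $k$ linearly independent points $q_1,\dots,q_k\in L\isom\bbP^{k-1}\subset\bbP^k$; such a basis exists because $L$ has projective dimension $k-1$. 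The corresponding forms $f_i=\sum_{j}c_{ij}L_j^n$ are all annihilated by $\alpha$.

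Because the hypothesis $k<(n-1)/3$ forces $2(k+1)\le n+1$, the uniqueness of the normalised GAD applies: for generic $q_i\in L$ all coefficients $c_{ij}$ are nonzero and $l(f_i)=k+1$, so Theorem \ref{completeintersection} yields $\operatorname{Ann}(f_i)=(\alpha,\beta_i)$ with $\deg\alpha=k+1$ and $\deg\beta_i=n+2-(k+1)=n-k+1$. The $\beta_i$ are pairwise non-proportional: if $\beta_i\propto\beta_j$ then $\operatorname{Ann}(f_i)=\operatorname{Ann}(f_j)$, hence $[f_i]=[f_j]$ in $\bbP(S_n)$, contradicting the linear independence of $q_i,q_j$ inside $L$.

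For the rank bounds I work in $T_{n-2}$, of dimension $n-1$. Multiplication by $\alpha$ is injective on $T_{n-k-3}$, so $\dim\langle\alpha\rangle_{n-2}=n-k-2$. Since $\langle\alpha\rangle_{n-2}\subset(\alpha,\beta_i)_{n-2}$ for every $i$, the kernel $\bigcap_i(\alpha,\beta_i)_{n-2}$ of $N^L_{n,k}$ has dimension at least $n-k-2$, which gives the upper bound $\rank N^L_{n,k}\le k+1$. For the matching lower bound one passes to the Artin Gorenstein quotient $B:=T/(\alpha)$, i.e.\ the coordinate ring of the $k+1$ points $[L_j]\in\bbP^1$: the images $\bar\beta_i\in B_{n-k+1}$ are pairwise non-proportional and the subspaces $\bar\beta_i\cdot B_{k-3}\subset B_{n-2}$ should intersect in a subspace of the expected codimension. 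The main obstacle is exactly this transversality step: one must show that generic position of the $q_i$'s inside $L$ propagates to generic position of the $\bar\beta_i$'s inside the $(k+1)$-dimensional space $B_{n-2}$. The non-defectivity of the secant varieties of $C_n$ (via \cite{chiantiniciliberto}) combined with the uniqueness of GAD in the range $2(k+1)\le n+1$ should supply this final transversality estimate.
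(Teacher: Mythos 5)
Your construction is the same one the paper relies on (the Observation carries its justification inline and has no separate proof): take $\alpha=\prod_{j=1}^{k+1}(L_j)^{\perp}$ for the $k+1$ points cut out by the secant $\bbP^k$, use the Apolarity Lemma \ref{apolarity} / Lemma \ref{GADLEMMA} to see that $\alpha\circ f=0$ for every $f$ in that $\bbP^k$, and then invoke Theorem \ref{completeintersection} to get $\operatorname{Ann}(f_i)=(\alpha,\beta_i)$ with $\deg\beta_i=n+2-(k+1)=n-k+1$; the count $\dim\langle\alpha\rangle_{n-2}=n-k-2$ and the resulting bound $\rank N^L_{n,k}\le k+1$ are exactly the paper's. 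You are in fact more careful than the source on two points it passes over: that $l(f_i)=k+1$ requires all coefficients $c_{ij}$ to be nonzero, which fails if $L$ lies in the span of a proper subset of the $\nu_n([L_j])$ (an implicit nondegeneracy hypothesis on $L$), and that distinct points of $L$ give non-proportional $\beta_i$ by Macaulay duality.

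The one step you leave open --- transversality of the subspaces $\bar\beta_i\cdot B_{k-3}$ inside $B_{n-2}$ --- is needed only for a \emph{lower} bound on the rank, and there the statement itself is defective: in the regime of this section $3k<n-1$, hence $n-k-1>2k\ge k+1$, and the printed chain $n-k-1\le\rank(N^L_{n,k})\le k+1$ is vacuous. This is evidently a misprint (compare Observation \ref{lemmacodimr2}, which gives the only lower bound actually available, $k\le\rank N^L_{n,k}$, and requires no transversality; likewise $\dim\langle\beta_i\rangle_{n-2}$ should be $k-2=\dim T_{k-3}$, not $k$). So you should not try to close the gap in the form you pose it. If the intended conclusion is the equality $\rank N^L_{n,k}=k+1$, which is what the neighbouring propositions and Theorem \ref{mainresult} use, then the transversality statement you isolate is genuinely the missing ingredient --- and the paper does not supply it either.
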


\begin{obs}
We can observe that if $L$ belongs to a $(n-2)-$secant  $\bbP^{n-3}$ generated by $q_1,...,q_{n-1}$, then there exists an element $\phi\in H^0(\sO_{\pi_L(C_n)}^n)\isom S^{n-1}V^{\vee}=T_{n-1}$ such that $\phi\in \ker(N^L_{n,k})=\bigcap_i\ker(Cat_{f_i}(2,n-2))$, in fact we can always take $\phi= \prod^{n}_{i=1} L^{\perp}_{q_i}$, since $\dim \ker N^L_{n,k}\geq 1$.

Unfortunately this condition is empty for $3k<n-2$, in fact we can compute the codimension of the variety of every $\bbP^{k-1}$ which belong to some (n-2)-secant $\bbP^{n-3}$ constructing an incidence variety:
$$
I_S=\{(L,\pi):L\in Gr(\bbP^{k-1},\bbP^n),\pi\in S, L\subset S\},
$$
where $S$ is the set of all (n-2)-secant $\bbP^{n-3}$ to $C_n$. In the usual way we can compute the codimension of the image of this incidence variety in $\bbGr(k-1,n)$. We will indicated with $\phi_1$ and $\phi_2$ the natural projections:
$$
\xymatrix{
& I_S \ar[dl]_{\phi_1} \ar[dr]^{\phi_2}&\\
Gr(\bbP^{k-1},\bbP^n) & & S,
}
$$
so the codimension in $Gr(\bbP^{k-1},\bbP^n)$ of $\phi_1(I_S)$ is equal to $\dim Gr(\bbP^{k-1},\bbP^n)-\dim S-\dim \phi^{-1}_2(S)=k(n+1-k)-n+2-k(n-2-k)$. That calculation is effective thanks to the result of Chiantini and Ciliberto on the non-defectivity of the Grassmannians of secant varieties of curves (see \cite{chiantiniciliberto}). We have that this variety has codimension  $3k-n+1$, but we are in the hypothesis $3k< n-1$, so $3k-n+1<0$.

For $3k=n-1$ the condition gives $\codim =0$, so it is verified for all $L$.
\end{obs}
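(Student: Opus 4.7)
The observation combines a concrete apolarity assertion (producing a nonzero kernel element of $N^L_{n,k}$ from a secant hypothesis) with a dimension count that makes the hypothesis vacuous in the range $3k<n-1$. I would prove each half separately and then read off the borderline case $3k=n-1$ as a corollary.

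For the apolarity claim, I would invoke Lemma~\ref{apolarity}. Under the identification $\bbP^n = \bbP(\sym^n V)$, each point $q_i\in C_n$ corresponds to $[L_{q_i}^n]$ for a linear form $L_{q_i}\in V$. If $L\subset \bbP^{n-3}=\langle q_1,\dots,q_{n-2}\rangle$ is an $(n-2)$-secant to $C_n$, then every $p\in L$ corresponds to a binary form $f_p=\sum_i c_i L_{q_i}^n$. Since $L_{q_i}^{\perp}\in V^{\vee}$ satisfies $L_{q_i}^{\perp}\circ L_{q_i}^{n}=n\bigl(L_{q_i}^{\perp}(L_{q_i})\bigr)L_{q_i}^{n-1}=0$, commutativity of the contraction action shows that the product $\phi=\prod_{i=1}^{n-2}L_{q_i}^{\perp}$ annihilates each $L_{q_i}^n$ and hence every $f_p$. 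Identifying $\ker N^L_{n,k}=\bigcap_i\ker \mathrm{Cat}_{f_i}(2,n-2)$, this exhibits $\phi\in\ker N^L_{n,k}$, so $\dim\ker N^L_{n,k}\geq 1$.

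For the codimension count I would set up the incidence variety
\[
I_S=\{(L,\pi)\in Gr(\bbP^{k-1},\bbP^n)\times S : L\subset \pi\},
\]
where $S$ parametrizes the $(n-2)$-secant $\bbP^{n-3}$'s to $C_n$, and study its two projections $\phi_1,\phi_2$. The $(n-2)$-secant $(n-3)$-planes are parametrized by unordered $(n-2)$-tuples on $C_n\cong\bbP^1$, so $\dim S=n-2$. The second projection $\phi_2$ is a Grassmann bundle whose fiber over $\pi\cong\bbP^{n-3}$ is $Gr(\bbP^{k-1},\bbP^{n-3})$ of dimension $k(n-2-k)$. Subtracting $\dim I_S = (n-2)+k(n-2-k)$ from $\dim Gr(\bbP^{k-1},\bbP^n)=k(n+1-k)$ yields the expected codimension $3k-n+2$ of the image $\phi_1(I_S)$. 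For $3k<n-1$ this expected codimension is non-positive, which forces $\phi_1$ to be dominant so that the hypothesis becomes empty; for $3k=n-1$ the count collapses and $\phi_1(I_S)$ fills $Gr(\bbP^{k-1},\bbP^n)$, confirming the final assertion.

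The main obstacle is showing that $\phi_1$ actually attains its expected image dimension and does not degenerate onto a smaller subvariety. This requires knowing that the $(n-2)$-secant variety of $C_n$ is non-defective as a subvariety of $Gr(\bbP^{n-3},\bbP^n)$, which is exactly the content of the Chiantini--Ciliberto non-defectivity theorem for secant varieties of curves \cite{chiantiniciliberto}; the paper has invoked this repeatedly for analogous incidence computations, and the same citation closes the argument here.
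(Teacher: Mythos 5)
Your argument follows the paper's own route step for step: the kernel element is produced as the product $\prod_{i} L_{q_i}^{\perp}$ of the linear forms apolar to the secant points (exactly the Apolarity Lemma / GAD mechanism the paper invokes inline), and the second half is reduced to the incidence-variety count $\dim Gr(\bbP^{k-1},\bbP^n)-\dim S-\dim\phi_2^{-1}(\pi)$, closed by the Chiantini--Ciliberto non-defectivity result. Your version is in fact cleaner than the source on the first point: the paper places $\phi$ in $T_{n-1}$ and runs the product to $n$ factors, which is inconsistent with $\ker(Cat_{f_i}(2,n-2))\subset T_{n-2}$, whereas your $\phi=\prod_{i=1}^{n-2}L_{q_i}^{\perp}\in T_{n-2}$ is the correct normalization (and matches the analogous observation in the earlier section of the paper).

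The genuine problem is the borderline case. You correctly compute the expected codimension as $3k-n+2$ --- which is also what the paper's displayed formula $k(n+1-k)-(n-2)-k(n-2-k)$ evaluates to --- but you then assert that at $3k=n-1$ ``the count collapses and $\phi_1(I_S)$ fills $Gr(\bbP^{k-1},\bbP^n)$.'' That contradicts your own number: at $3k=n-1$ the expected codimension is $3k-n+2=1$, and indeed $\dim I_S=(n-2)+k(n-2-k)=\dim Gr(\bbP^{k-1},\bbP^n)-1$, so $\phi_1$ cannot possibly be dominant there; the image is at most a divisor. The paper is itself internally inconsistent at this point (it states the codimension as $3k-n+1$ after displaying a formula equal to $3k-n+2$, and its conclusion ``$\codim=0$ for $3k=n-1$'' only follows from the former), and you have reproduced the paper's conclusion while deriving the number that refutes it. The determinantal heuristic sides with $3k-n+2$: the locus where the stacked $(n-1)\times 3k$ catalecticant drops rank has expected codimension $1\cdot(3k-(n-2))$. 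So either the codimension claim or the ``verified for all $L$'' claim must be corrected; they cannot both stand as written.
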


\section{Restricted Tangent Bundle}
For the restricted tangent bundle we obtain similar results as for
the normal bundle.

\begin{lem}
Let $n\leq 2k$. The codimension in $Gr(\bbP^{k-1},\bbP^n)$ of the variety of  $L\isom \bbP^{k-1}$ in $\bbP^n$ belonging to some $(n-r)$-secant $\bbP^{n-r-1}$ to the rational normal curve in $\bbP^n$  is $k-n+r+kr$.
\end{lem}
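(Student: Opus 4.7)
The strategy follows the incidence-variety method already used in the two preceding remarks of this section. Consider
\[ I_S = \{(L,\pi)\in Gr(\bbP^{k-1},\bbP^n)\times S \colon L\subset \pi\}, \]
where $S$ is the family of $(n-r)$-secant $\bbP^{n-r-1}$'s to $C_n$, together with the two natural projections $\phi_1$ onto the Grassmannian and $\phi_2$ onto $S$. The variety in question is the image $\phi_1(I_S)$, and I would compute its codimension by comparing the dimensions of $I_S$ and of $Gr(\bbP^{k-1},\bbP^n)$.

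The three dimension tallies are routine. One has $\dim Gr(\bbP^{k-1},\bbP^n) = k(n-k+1)$. An $(n-r)$-secant $\bbP^{n-r-1}$ to $C_n$ is determined by an unordered $(n-r)$-tuple of points on $C_n \isom \bbP^1$ (any $n-r\leq n+1$ points of the rational normal curve are in linearly general position), so $\dim S = n-r$. The fiber $\phi_2^{-1}(\pi)$ over any $\pi \in S$ is the Grassmannian $Gr(\bbP^{k-1},\bbP^{n-r-1})$, of dimension $k(n-r-k)$, which is nonnegative exactly when a $\bbP^{k-1}$ fits inside $\pi$. Hence $\dim I_S = (n-r) + k(n-r-k)$, and subtracting yields
\[ k(n-k+1) - (n-r) - k(n-r-k) = k(r+1) + r - n = k-n+r+kr, \]
which is the announced number.

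What remains, and is the main obstacle, is to justify that this difference of dimensions really is the codimension of $\phi_1(I_S)$, i.e.\ that $\phi_1$ is generically finite onto its image. This is precisely where the non-defectivity theorem of Chiantini and Ciliberto for Grassmannians of secant varieties of curves enters, just as in the earlier remarks of this section, and where the hypothesis $n \leq 2k$ becomes relevant: under this assumption a general $L$ in the image is contained in only finitely many $(n-r)$-secant $\bbP^{n-r-1}$'s, so the expected codimension coincides with the actual one.
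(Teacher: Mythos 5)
Your proposal is correct and follows exactly the incidence-variety method the paper uses for the analogous codimension lemmas (this particular lemma is in fact stated without an explicit proof, but the paper's earlier proofs of the same type consist precisely of the construction of $I_S$, the dimension count $\dim Gr(\bbP^{k-1},\bbP^n)-\dim S-\dim\phi_2^{-1}(\pi)$, and the appeal to Chiantini--Ciliberto for effectivity). Your arithmetic $k(n-k+1)-(n-r)-k(n-r-k)=k-n+r+kr$ matches the stated value, and your identification of generic finiteness of $\phi_1$ as the point where non-defectivity is needed is the same justification the paper invokes.
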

In particular for $r=n-k-1$ we have:
\begin{teo}
Let $n\leq 2k$. If the centre of projection $L\isom\bbP^{k-1}$ belongs to some $(k+2)-$secant $\bbP^{k+1}$  to the rational normal curve $C_n$ in $\bbP^n$, then we have:
$$
T\bbP^{n-k}|_{\pi_{k}(C_n)}\isom \sO(n+1)^{n-k-1}\oplus\sO(n+1+k).
$$

\end{teo}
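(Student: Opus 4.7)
The argument parallels Theorem \ref{mainresult}, replacing the normal-bundle diagram (\ref{diagr:normCODIMK}) with the restricted-tangent-bundle sequence (\ref{succ:T}) and the matrix $\mathcal{N}^L_{n,k}$ with $\mathcal{T}^L_{n,k}$. As before, the plan is to convert the geometric hypothesis into a rank condition on $\mathcal{T}^L_{n,k}$ via apolarity, and then read off the splitting from the value of that rank.

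First I would translate the geometric hypothesis. Let $\nu_n(p_1),\dots,\nu_n(p_{k+2})$ span a $(k+2)$-secant $\bbP^{k+1}$ containing $L$, and pick $k$ points $q_1,\dots,q_k$ generating $L$ with corresponding binary forms $f_1,\dots,f_k\in S_n$. By the Apolarity Lemma \ref{apolarity} and Lemma \ref{GADLEMMA}, each $f_i$ admits a GAD supported on $L_{p_1},\dots,L_{p_{k+2}}$, so the form $\alpha=\prod_{j=1}^{k+2}(L_{p_j})^{\perp}\in T_{k+2}$ with simple roots is apolar to every $f_i$. By the Gorenstein structure of Theorem \ref{completeintersection}, each $Ann(f_i)=(\alpha_i,\beta_i)$ with $\deg\alpha_i+\deg\beta_i=n+2$, and the common form $\alpha$ lies in every such apolar ideal.

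Second I would compute $\rank\mathcal{T}^L_{n,k}$ from the intersection $\bigcap_{i=1}^k(\alpha_i,\beta_i)_d$ in the degree $d$ governed by (\ref{succ:T}), using a catalecticant/apolarity count analogous to that employed in the rank lemmas of Section 2 (e.g.\ the analyses of $\rank N^L_{n,2}=3,4,5$). The presence of the common form $\alpha$, together with the coprimality of $\alpha_i$ and $\beta_i$ (Theorem \ref{completeintersection}.iii) and the explicit Hilbert function of $A_{f_i}$, forces $\rank\mathcal{T}^L_{n,k}$ to exactly that value which, via the tangent-bundle analog of the splitting–rank dictionary (parallel to case~(2) of the proposition preceding Theorem \ref{mainresult}), corresponds to the splitting $\sO(n+1)^{n-k-1}\oplus\sO(n+1+k)$.

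The main obstacle is the regime $n\leq 2k$. Here $l(f_i)\leq\lfloor n/2\rfloor+1\leq k+1<k+2$, so $\alpha$ is \emph{not} the minimal apolar generator of any $f_i$: it must be reducible and divisible by the true generator $\alpha_i$ of strictly smaller degree. Consequently the uniqueness statements for the canonical GAD do not apply, and the clean rank count from the normal-bundle proof has to be adapted: one must verify, using only the membership $\alpha\in Ann(f_i)$ and the Hilbert-function data of Theorem \ref{completeintersection}.i, that the intersection of the $(\alpha_i,\beta_i)_d$ still has the dimension predicted by the target splitting. Once this dimension count is in place, matching it with the lower bound on $\rank\mathcal{T}^L_{n,k}$ and invoking the splitting–rank dictionary for (\ref{succ:T}) gives the claimed decomposition of $T\bbP^{n-k}|_{\pi_L(C_n)}$.
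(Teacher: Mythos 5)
Your overall strategy --- read the splitting off the rank of $\mathcal{T}^L_{n,k}$, and bound that rank by exhibiting a common apolar form supplied by the secant space --- is the right one, and it is essentially all the paper itself has here: the theorem is stated as the specialization $r=n-k-1$ of the preceding codimension lemma and carries no written proof, so the rank computation is left implicit. The correct dictionary, which your plan matches, is: $\ker\mathcal{T}^L_{n,k}=\bigcap_{i=1}^k Ann(f_i)_{n-1}$; the splitting $\sO(n+1)^{n-k-1}\oplus\sO(n+1+k)$ is equivalent to $\rank\mathcal{T}^L_{n,k}=k+1$, i.e.\ to $\dim\bigcap_i Ann(f_i)_{n-1}=n-k-1$; and $\rank\mathcal{T}^L_{n,k}\geq k+1$ holds automatically because every summand of $T\bbP^{n-k}|_{\pi_L(C_n)}$ has degree at least $n+1$.

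The gap sits exactly where you flag a "main obstacle," and it is not a technical adaptation but a genuine failure. A $(k+2)$-secant $\bbP^{k+1}$ only gives a common apolar form $\alpha\in T_{k+2}$, hence only $\alpha\cdot T_{n-k-3}\subseteq\bigcap_i Ann(f_i)_{n-1}$, of dimension $n-k-2$: this yields $\rank\mathcal{T}^L_{n,k}\leq k+2$, one more than needed, and no Hilbert-function bookkeeping for the individual $A_{f_i}$ will manufacture the missing element of the intersection, because for a general $L$ inside a general $(k+2)$-secant $\bbP^{k+1}$ it does not exist. Indeed the statement as written fails already for $n=4$, $k=2$: every $\bbP^3\subset\bbP^4$ is $4$-secant to $C_4$, so every line lies in a $(k+2)$-secant $\bbP^{k+1}$, yet the generic projected plane quartic has balanced $T\bbP^2|_{\pi_L(C_4)}\isom\sO(6)^{2}$ (factor $\mathcal{T}^L_{4,2}$ through evaluation of $T_3$ at the four points; the resulting $4\times 4$ matrix is generically invertible). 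What makes the conclusion true is the hypothesis actually encoded in the preceding lemma: $r=n-k-1$ corresponds to a $(k+1)$-secant $\bbP^{k}$, exactly as in Theorem \ref{mainresultTG}, and then $\alpha\in T_{k+1}$ gives $\alpha\cdot T_{n-k-2}\subseteq\bigcap_i Ann(f_i)_{n-1}$ of dimension $n-k-1$, closing the argument in one line. So you should either correct the hypothesis to a $(k+1)$-secant $\bbP^k$ or impose a further degeneration of $L$ inside the $\bbP^{k+1}$; as it stands, the verification you defer in your final paragraph cannot be carried out.
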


\begin{teo}\label{mainresultTG}
Let $2k\leq n$. If the centre of projection $L\isom\bbP^{k-1}$ belongs to some $\bbP^{k}$ $(k+1)-$secant to the rational normal curve $C_n$ in $\bbP^n$, then we have:
$$
T\bbP^{n-k}|_{\pi_{k}(C_n)}\isom \sO(n+1)^{n-k-1}\oplus\sO(n+1+k).
$$
\end{teo}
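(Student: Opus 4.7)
The plan is to adapt the apolarity strategy used for Theorem \ref{mainresult} to the tangent bundle via the exact sequence (\ref{succ:T}). Dualizing the desired splitting and twisting by $n+1$ converts the goal into
$$(T\bbP^{n-k}|_{\pi_L(C_n)})^{\vee}(n+1)\isom \sO_{\bbP^1}^{n-k-1}\oplus\sO_{\bbP^1}(-k).$$
Since this sheaf embeds into $\sym^{n-1}V\otimes\sO_{\bbP^1}$, all its summands have non-positive degree and its total degree is $-k$; among the admissible splittings this one is singled out by the maximality condition $h^0=n-k-1$. It therefore suffices to show $h^0\bigl((T\bbP^{n-k}|_{\pi_L(C_n)})^{\vee}(n+1)\bigr)=n-k-1$. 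Taking cohomology in (\ref{succ:T}) and using $H^1(\sym^{n-1}V\otimes\sO_{\bbP^1})=0$ gives $h^0=\dim\ker(\mathcal{T}^L_{n,k}\colon\sym^{n-1}V\to\bbC^k\otimes V)$, and unwinding the explicit matrix $T^L_{n,k}$ identifies this kernel, via the catalecticant interpretation of its rows, with $\bigcap_{i=1}^k Ann(f_i)_{n-1}\subset T_{n-1}$, where $f_1,\dots,f_k\in S_n$ are the binary forms attached to $k$ points $q_1,\dots,q_k$ spanning $L$.

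Next I apply apolarity. Since $L\subset\bbP^k$ is $(k+1)$-secant to $C_n$, the Apolarity Lemma together with Lemma \ref{GADLEMMA} produces a common form $\alpha\in T_{k+1}$ with $k+1$ simple roots lying in $Ann(f_i)$ for every $i$. Under the hypothesis $2k\leq n$, Theorem \ref{completeintersection} makes $\alpha$ the low-degree generator of the complete intersection $Ann(f_i)=(\alpha,\beta_i)$, with $\deg\beta_i=n-k+1\geq k+1$. Consequently
$$\alpha\cdot T_{n-k-2}\ \subseteq\ \bigcap_{i=1}^{k}Ann(f_i)_{n-1}$$
contributes a subspace of dimension $n-k-1$, giving the lower bound $h^0\geq n-k-1$.

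For the reverse inclusion, let $p_1,\dots,p_{k+1}$ be the roots of $\alpha$ and write $f_i=\sum_j c_{ij}L_j^{[n]}$ with $L_j=a_js+b_jt$. The evaluation $T_{n-1}\to\bbC^{k+1}$, $\phi\mapsto(\phi(p_j))_j$, is surjective by Apolarity Lemma iii) and has kernel $\alpha\cdot T_{n-k-2}$, so it suffices to show that no nonzero $u\in\bbC^{k+1}$ arises as $(\phi(p_j))_j$ for a $\phi\in\bigcap_i Ann(f_i)_{n-1}$. Apolarity Lemma i) translates $\phi\circ f_i=0\in S_1$ into the two simultaneous conditions $CAu=0$ and $CBu=0$, where $C=(c_{ij})$ is $k\times(k+1)$ and $A=\mathrm{diag}(a_j)$, $B=\mathrm{diag}(b_j)$. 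Since $q_1,\dots,q_k$ span a $\bbP^{k-1}$ inside the ambient $\bbP^k$, $C$ has rank $k$, so $\ker C$ is a single line; for $\ker(CA)\cap\ker(CB)$ to be nonzero, $\ker C$ would have to be simultaneously preserved by $A^{-1}$ and $B^{-1}$, i.e.\ to be an eigenline of the diagonal $AB^{-1}$. As $AB^{-1}$ has distinct eigenvalues (the $p_j$ are pairwise distinct) its eigenlines are the coordinate axes, which would force some column of $C$ to vanish, contradicting the normalization of the GAD. Hence $\ker(CA)\cap\ker(CB)=0$, the intersection $\bigcap_i Ann(f_i)_{n-1}$ equals $\alpha T_{n-k-2}$, and the claimed splitting follows.

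The main obstacle will be the rank analysis in the last paragraph: one must verify the transversality of $\ker(CA)$ and $\ker(CB)$ for every admissible $L$ and not only for a generic one, plausibly by requiring (as in the rank $3$ and rank $4$ cases already treated) that $L$ be not contained in any smaller secant variety. The role of the hypothesis $2k\leq n$ is precisely to keep $\alpha$ as the low-degree generator of $Ann(f_i)$ so that $\alpha\cdot T_{n-k-2}$ fills up the intersection in degree $n-1$; when $n\leq 2k$ this balance flips and one must use the larger $(k+2)$-secant $\bbP^{k+1}$ of the preceding theorem to reach the same conclusion.
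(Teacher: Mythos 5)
The paper states Theorem \ref{mainresultTG} without any proof, so there is no argument of the author's to compare yours against line by line; what you have written genuinely fills a gap in the text, and it follows the same apolarity template the paper uses for the normal-bundle statements (Theorem \ref{mainresult} and the rank $3$ and $4$ cases). Your argument is correct, but you do about twice the necessary work. Once you know that $E=(T\bbP^{n-k}|_{\pi_L(C_n)})^{\vee}(n+1)$ is a subsheaf of $\sO_{\bbP^1}^{n}$ of rank $n-k$ and degree $-k$, every summand of $E$ has non-positive degree and $h^0(E)$ equals the number of degree-zero summands; since the total degree is strictly negative, the upper bound $h^0(E)\leq n-k-1$ is automatic. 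Hence the lower bound alone --- which you obtain cleanly from $\alpha\cdot T_{n-k-2}\subseteq\bigcap_{i}Ann(f_i)_{n-1}=\ker T^L_{n,k}$ with $\alpha=\prod_{j}(L_j)^{\perp}$ of degree $k+1$, using only Lemma \ref{apolarity} i) applied to $f_i=\sum_j c_{ij}L_j^{[n]}$ --- already forces exactly $n-k-1$ trivial summands and one summand $\sO(-k)$, i.e.\ the claimed splitting. In particular the appeal to Theorem \ref{completeintersection} is not needed (and is not quite safe: for special points $q_i\in L$ of length $<k+1$ the form $\alpha$ need not be the minimal generator of $Ann(f_i)$, though it still lies in the ideal, which is all you use). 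This observation also makes your entire last paragraph, including the $\ker(CA)\cap\ker(CB)$ transversality analysis and the worry you raise about non-generic $L$, superfluous. If you do wish to keep that computation, two points need repair: $A$ and $B$ need not be invertible (argue instead from $a_ju_j=\lambda v_j$, $b_ju_j=\mu v_j$ that $(\mu a_j-\lambda b_j)u_j=0$ for all $j$, so $u$ is supported on a single index); and a vanishing column of $C$ contradicts not the normalization of a GAD but the standing assumption that $\pi_L(C_n)$ has degree $n$, since a zero $j$-th column would place $L\isom\bbP^{k-1}$ inside the $k$-secant $\bbP^{k-1}$ spanned by the remaining points and hence force $L$ to meet $C_n$.
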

\begin{obs}
By Theorem 1 in \cite{Ramella1} (see also \cite{verdier})  $T^n_{n-k}((n+1)^{n-k-1},n+1+k)$ is an irreducible variety of $\codim(T^n_{n-k}((n+1)^{n-k-1},n+1+k))=(k-1)(n-k-1)\leq kn-k^2-k-1$.
\end{obs}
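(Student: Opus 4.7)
The plan is to derive both assertions from the general stratification of $H^{n-k,n}$ by the splitting type of the restricted tangent bundle, invoking \cite{Ramella1} (see also \cite{verdier}) for the irreducibility and for the closed formula for the codimension of each stratum, and then to verify the displayed arithmetic inequality by direct computation.

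First, I would recall Ramella's Theorem 1, which asserts that for every admissible splitting type $\underline{t}=(t_1\leq\cdots\leq t_{n-k})$ satisfying $\sum_i t_i=n(n-k+1)$, the stratum $T^n_{n-k}(\underline{t})\subset H^{n-k,n}$ is irreducible, of codimension equal to
$$
h^1\Bigl(\bbP^1,\,\End\bigl(\textstyle\bigoplus_i\sO(t_i)\bigr)\Bigr)\;=\;\sum_{i<j}\max\bigl(0,\ t_j-t_i-1\bigr).
$$
The right-hand expression is the standard formula for the dimension of the obstruction space to deforming a vector bundle on $\bbP^1$ toward a less special splitting; Ramella attains it by an explicit parametrization of $H^{n-k,n}$ on an affine chart.

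Next, I would specialize to $\underline{t}=((n+1)^{n-k-1},\ n+1+k)$. The only pairs $(i,j)$ with $t_j-t_i-1>0$ are the $n-k-1$ pairs in which $t_i=n+1$ and $t_j=n+1+k$; each contributes $(n+1+k)-(n+1)-1=k-1$. Summing gives codimension exactly $(k-1)(n-k-1)$, which is the value asserted in the Observation, while irreducibility is immediate from Ramella's theorem applied to the same splitting.

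Finally, the inequality $(k-1)(n-k-1)\leq kn-k^2-k-1$ is a short arithmetic check: expanding the left-hand side yields $kn-k^2-n+1$, and the difference with the right-hand side equals $n-k-2$. Under the standing hypothesis that the target $\bbP^{n-k}$ has dimension at least $2$, i.e.\ $n-k\geq 2$, this difference is nonnegative and the inequality holds (with equality iff $n=k+2$). The main point to verify is that Ramella's original formulation, often phrased on the moduli of degree-$n$ morphisms $\bbP^1\to\bbP^{n-k}$, transports faithfully to the component $H^{n-k,n}$ containing the smooth rational curves: this is harmless because the $\PGL(2)$-action identifying maps with their images is free on the non-degenerate locus, so both irreducibility and codimension of every splitting-type stratum descend to $H^{n-k,n}$ without modification.
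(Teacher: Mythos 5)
Your proposal is correct and takes essentially the same route as the paper, which offers no argument beyond the citation of Ramella's Theorem 1: specializing the codimension formula $\sum_{i<j}\max(0,t_j-t_i-1)$ to the type $((n+1)^{n-k-1},n+1+k)$ indeed gives $(n-k-1)(k-1)$, and your arithmetic check that the difference $kn-k^2-k-1-(k-1)(n-k-1)=n-k-2$ is nonnegative is valid in the paper's standing range (e.g.\ $2k<n$ forces $n\geq k+2$ for $k\geq 1$). The only added value of your write-up is that it makes the citation explicit, which the paper leaves implicit.
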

\begin{cor}
Let $2k<n$. The variety of linear spaces $L\isom\bbP^{k-1}$ such
that, $\pi_L(C_n) \subset \bbP^{n-k}$ has the restricted tangent
bundle
$T\bbP^{n-k}_{\pi_{k}(C_n),\bbP^{n-k}}\isom\sO(n+1)^{n-k-1}\oplus\sO(n+1+k)
$ has an irreducible subvariety of codimension $kn-k^2-k-1$ in
$Gr(\bbP^{k-1},\bbP^n)$ formed by the linear spaces $L$ belonging to
some $(k+1)-$secant $\bbP^k$.
\end{cor}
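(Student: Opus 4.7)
The plan is to combine Theorem \ref{mainresultTG}, which already establishes the desired splitting whenever $L$ is contained in a $(k+1)$-secant $\bbP^{k}$, with an incidence variety computation of the same type used in the codimension lemmas earlier in the paper. Denote by $W\subseteq Gr(\bbP^{k-1},\bbP^n)$ the locus of linear spaces $L\isom\bbP^{k-1}$ contained in some $(k+1)$-secant $\bbP^{k}$ to $C_n$. By Theorem \ref{mainresultTG} one has $W\subseteq T^n_{n-k}((n+1)^{n-k-1},n+1+k)$, so it remains to show that $W$ is irreducible of codimension $kn-k^2-k-1$ in $Gr(\bbP^{k-1},\bbP^n)$.

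For this I would introduce the incidence variety
$$I_S=\{(L,\Pi)\in Gr(\bbP^{k-1},\bbP^n)\times S:L\subset \Pi\},$$
where $S$ is the variety of $(k+1)$-secant $\bbP^{k}$'s to $C_n$. Since every $k+1\leq n+1$ points of a rational normal curve are linearly independent, $S$ is irreducible of dimension $k+1$ (identified with the symmetric product $(\bbP^1)^{(k+1)}\isom \bbP^{k+1}$). The fibre of the second projection $\phi_2:I_S\to S$ over $\Pi$ consists of the hyperplanes in $\Pi\isom\bbP^{k}$, i.e.\ a $k$-dimensional $(\bbP^{k})^{\vee}$. Therefore $I_S$ is irreducible of dimension $2k+1$, and $W=\phi_1(I_S)$ is irreducible.

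It remains to verify that the first projection $\phi_1$ has generically zero-dimensional fibres, which will yield
$$\codim(W,Gr(\bbP^{k-1},\bbP^n))=k(n+1-k)-(2k+1)=kn-k^2-k-1,$$
matching the desired value. The main obstacle is precisely this generic-finiteness of $\phi_1$. I would argue as in the proof of the analogous codimension lemma for $(n-r-1)$-secant $\bbP^{n-r-2}$'s: if a generic $L\in W$ were contained in a positive-dimensional family of $(k+1)$-secant $\bbP^{k}$'s, then the span of any two such $\bbP^{k}$'s would be a $\bbP^{k+1}$ meeting $C_n$ in at least $k+2$ points, forcing $L$ into the locus of $\bbP^{k-1}$'s contained in a $(k+2)$-secant $\bbP^{k+1}$. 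By the non-defectivity of the secant varieties of $C_n$ proved by Chiantini-Ciliberto \cite{chiantiniciliberto}, this latter locus has strictly smaller dimension than $W$ and hence cannot exhaust $W$; the generic fibre of $\phi_1$ is thus zero-dimensional, closing the codimension computation and completing the proof.
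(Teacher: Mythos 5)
Your overall strategy --- quote Theorem \ref{mainresultTG} for the containment, then compute the dimension of the locus $W$ of $L$'s lying in a $(k+1)$-secant $\bbP^k$ via the incidence variety $I_S$ --- is exactly the route the paper takes (the paper leaves the corollary without an explicit proof, but its preceding codimension lemma is precisely this incidence-variety count, and substituting $r=n-k-1$ into its formula $k-n+r+kr$ gives $kn-k^2-k-1$). Your arithmetic $\dim I_S=(k+1)+k=2k+1$ and $\codim W=k(n+1-k)-(2k+1)=kn-k^2-k-1$ is correct, as is the irreducibility of $W$ as the image of the irreducible $I_S$.

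The genuine flaw is in your justification of generic finiteness of $\phi_1$. You argue that if a generic $L\in W$ lay on a positive-dimensional family of $(k+1)$-secant $\bbP^k$'s, it would be forced into the locus $W'$ of $\bbP^{k-1}$'s contained in a $(k+2)$-secant $\bbP^{k+1}$, and that $W'$ has strictly smaller dimension than $W$. This last claim is backwards: \emph{every} $L\in W$ already lies in $W'$ (adjoin any further point of $C_n$ to the $(k+1)$-secant $\bbP^k$ and take the span), so $W\subseteq W'$; moreover the same incidence count gives $\dim W'\leq (k+2)+2k=3k+2>2k+1$, and in fact $W'$ is strictly larger, not smaller. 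So the dichotomy you set up yields no contradiction and the generic-finiteness step is unproved. The gap is fixable: for $L\in W$ with $L\cap C_n=\emptyset$ (the generic case), the fibre $\phi_1^{-1}(L)$ injects into the set of points of $\pi_L(C_n)\subset\bbP^{n-k}$ of multiplicity at least $k+1$, which is finite because $\pi_L(C_n)$ is a reduced irreducible curve; alternatively one simply invokes, as the paper does, the Chiantini--Ciliberto non-defectivity of Grassmannians of secant varieties, which asserts directly that $\phi_1(I_S)$ has the expected dimension --- not the incorrect comparison with the $(k+2)$-secant locus.
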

\section*{Acknowledgements}
This paper is part of my PhD thesis. I am very
grateful to my advisor Professor Giorgio Ottaviani for the patience with
which he followed this work very closely.

\newpage

\bibliography{BIBPHDTHESIS}
\bibliographystyle{plain}

\end{document}